\definecolor{linkcolor}{rgb}{0, 0, 0.54}
\theoremstyle{plain}
\newtheorem{theorem}{Theorem}
\newtheorem{lemma}[theorem]{Lemma}
\newtheorem{proposition}[theorem]{Proposition}
\newtheorem{corollary}[theorem]{Corollary}
\theoremstyle{definition} 
\newtheorem{definition}[theorem]{Definition}
\newtheorem{assumption}[theorem]{Assumption}
\titlespacing*{\section}{0em}{2em}{0em}
\titlespacing*{\subsection}{0em}{2em}{0em}
\titlespacing*{\subsubsection}{0em}{2em}{0em}
\setlist[itemize]{topsep=0pt}
\setlist[enumerate]{topsep=0pt}
\newcommand{\DD}[1]{\ensuremath{\mathcal{D}^{{#1}}}}
\newcommand{\D}{\DD{d}}
\newcommand{\f}{\ensuremath{f^*}}
\renewcommand{\S}{\ensuremath{s}}
\newcommand{\R}{\mathbb{R}}
\newcommand{\N}{\mathbb{N}}
\newcommand{\blank}{\makebox[1ex]{\textbf{$\cdot$}}}
\newcommand\independent{\protect\mathpalette{\protect\independenT}{\perp}}
\def\independenT#1#2{\mathrel{\rlap{$#1#2$}\mkern2mu{#1#2}}}
\renewcommand{\phi}{\varphi}
\renewcommand{\epsilon}{\varepsilon}
\newcommand*\diff{\mathop{}\!\mathrm{d}}
\newcommand\smallO{\textit{o}}
\newcommand\bigO{\textit{O}}
\newcommand{\midd}{\; \middle|\;}
\newcommand{\1}{\mathds{1}}
\DeclareMathOperator*{\argmin}{\arg\!\min}
\newcommand{\arrow}[1]{\xrightarrow{\; {#1} \;}}
\newcommand{\leb}{\ensuremath{\lambda}} 
\newcommand{\KL}{\ensuremath{D_{\mathrm{KL}}}}
\newcommand{\empmeas}{{\mathbb{P}}_n} 
\DeclareMathOperator{\E}{\mathbb{E}} 
\title{Estimating conditional hazard functions and densities with
  the highly-adaptive lasso}
\author[1]{Anders Munch}
\author[1]{Thomas~A.~Gerds}
\author[2]{Mark~J.~van~der~Laan}
\author[1]{Helene~C.~W.~Rytgaard}
\affil[1]{Section of Biostatistics, University of Copenhagen}
\affil[2]{Devision of Biostatistics, University of California, Berkeley}
\begin{document}

\maketitle

\begin{abstract}
  We consider estimation of conditional hazard functions and densities over the
  class of multivariate càdlàg functions with uniformly bounded sectional
  variation norm when data are either fully observed or subject to
  right-censoring. We demonstrate that the empirical risk minimizer is either
  not well-defined or not consistent for estimation of conditional hazard
  functions and densities. Under a smoothness assumption about the
  data-generating distribution, a highly-adaptive lasso estimator based on a
  particular data-adaptive sieve achieves the same convergence rate as has been
  shown to hold for the empirical risk minimizer in settings where the latter is
  well-defined. We use this result to study a highly-adaptive lasso estimator of
  a conditional hazard function based on right-censored data. We also propose a
  new conditional density estimator and derive its convergence rate. Finally, we
  show that the result is of interest also for settings where the empirical risk
  minimizer is well-defined, because the highly-adaptive lasso depends on a much
  smaller number of basis function than the empirical risk minimizer.
\end{abstract}

\section{Introduction}
\label{sec:introduction}

Let \(\D_M\) be the space of multivariate càdlàg functions
\(f\colon[0,1]^d \rightarrow \R \) with sectional variation norm bounded by
\(M<\infty\). For a suitable loss function
\(L \colon \D_M \times [0,1] \rightarrow \R\) and a probability measure \(P\) on
\([0,1]^d\), we consider the parameter
\begin{equation}
  \label{eq:est-prob-d1}
  \f = \argmin_{f \in \D_M}P{[L(f, \blank)]},
  \quad \text{where} \quad P{[L(f, \blank)]} = \int_{[0,1]^d} L(f, x)  \diff P(x).
\end{equation}
The empirical risk minimizer estimates \( \f \) by minimizing
\( \empmeas{[L(f, \blank)]}\) over \( \D_M \), where \( \empmeas \) is the
empirical measure of a dataset \(\{X_i\}_{i=1}^n\) of i.i.d.\ observations
\( X_i \sim P \). The highly-adaptive lasso (HAL) estimator proposed by
\cite{van2017generally} estimates \( \f \) by minimizing
\( \empmeas{[L(f, \blank)]}\) over a sieve, i.e., a growing subset of the
parameter space
\citep{grenander1981abstract,geman1981sieves,geman1982nonparametric,walter1984simple}.
For particular choices of loss functions and sieves, the HAL estimator and the
empirical risk minimizer will be identical, but they might also be different. We
demonstrate that the use of a sieve is necessary for conditional hazard and
density estimation, as empirical risk minimizers over the class of cadlag
functions are not formally well-defined in these settings. A recent result by
\cite{van2023higher} formally established that a particular data-adaptive choice
of sieve is sufficient to achieve the asymptotic convergence rate of
\( n^{-1/3}\log(n)^{2(d-1)/3} \) when a smoothness assumption is imposed on the
measure \( P \). We use this result to theoretically study a conditional hazard
function estimator and a novel conditional density estimator based on a HAL.

Estimation of function-valued parameters over the function class \( \D_M \) is
of interest because the bound on the sectional variation norm works like a
non-parametric sparsity constraint that to some extend allows us to avoid the
curse of dimensionality. A particularly important application is in targeted or
debiased machine learning \citep{van2011targeted,chernozhukov2018double}, for
which non-parametric estimators of regression functions, conditional densities,
and conditional hazard functions are needed. A targeted or debiased estimator
relies on the ability to estimate such nuisance parameters faster than rate
\( n^{-1/4} \). It has been shown that this rate can be achieved independently
of the dimension of the covariate space when the nuisance parameter is assumed
to belong to \( \D_M \) \citep{van2017generally,bibaut2019fast}. In this paper
we take a closer look at this important result for conditional densities and
hazard functions. In addition, our work is relevant for estimation of regression
functions: The HAL estimator can be constructed using a number of basis
functions that scales linearly in the sample size \(n\) while the empirical risk
minimizer needs a number of basis function that is of order \(n^d\)
\citep{fang2021multivariate}.

The challenge with estimation of densities and hazard functions over \( \D_M \)
is illustrated in Figure~\ref{fig:emp-risk-dens-problem}. Informally, the
sectional variation norm measures how much a function fluctuates without taking
into account where in the domain of the function the fluctuations happen. A
consequence is that we can redistribute the probability mass assigned by a given
càdlàg density function without changing its sectional variation norm in such a
way that the log-likelihood loss is decreased. A similar issue occurs with
right-censored data in survival analysis, and our
Proposition~\ref{prop:emp-risk-min-surv-dont-exist} formally shows that the
empirical risk minimizer is in general either not well-defined or not consistent
for the conditional hazard function. On the other hand, a consistent HAL
estimator of a conditional hazard function does exist.

\begin{figure}
  \hspace*{\fill}
  \begin{subfigure}[h]{0.32\linewidth}
    \caption{}
    \label{fig:dens-fig-a}
    \centering \includegraphics[width=\linewidth, trim={0.6cm 0 0 0},clip]{./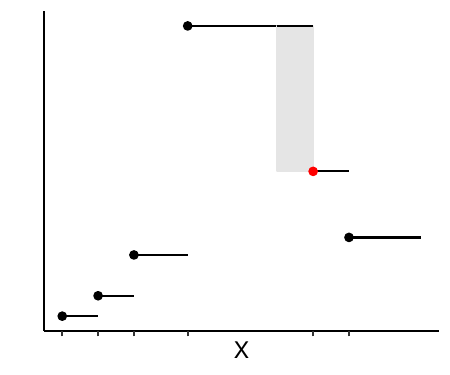}
  \end{subfigure}
  \hfill
  \begin{subfigure}[h]{0.32\linewidth}
    \caption{}
    \label{fig:dens-fig-b}
    \centering \includegraphics[width=\linewidth, trim={0.6cm 0 0
      0},clip]{./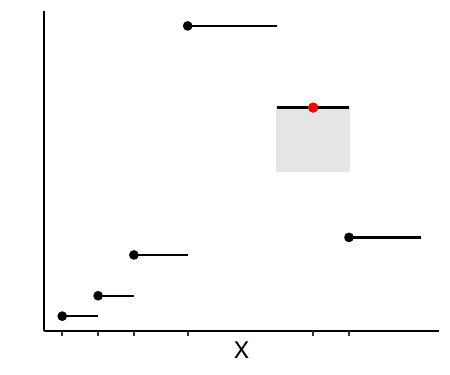}
  \end{subfigure}
  \hspace*{\fill}
  \caption[]{Illustration of two càdlàg densities, where the ticks at the
    \( x \)-axis denote observed data points, and the \( y \)-coordinates of the
    black dots denote the likelihood given to these points by the densities.
    Panel~(\subref{fig:dens-fig-a}) shows a given càdlàg density, while
    panel~(\subref{fig:dens-fig-b}) shows an adjusted density that has the same
    variation norm but assigns a higher likelihood to the observed data. Note
    that the function in panel~(\subref{fig:dens-fig-b}) is a density, because
    the gray boxes in panels~(\subref{fig:dens-fig-a})
    and~(\subref{fig:dens-fig-b}) have the same area. }
  \label{fig:emp-risk-dens-problem}
\end{figure}

Earlier related work on non-parametric functional estimation used Sobolev spaces
\citep{Gold:Mess:92:OPE,bickel88:density,Ston:80:ORC, goldstein1996efficient}.
Estimation over the class of multivariate càdlàg functions with uniformly
bounded sectional variation norm was introduced in \citep{van2017generally}.
Estimation of conditional hazard functions in the presence of censoring has
traditionally been done using kernel smoothing or local linear polynomials
\citep[e.g.,][]{ramlau1983smoothing,mckeague1990inference,van2001hazard,spierdijk2008nonparametric},
while more recent approaches use boosting
\citep{schmid2008flexible,hothorn2020transformation,lee2021boosted}. Conditional
hazard function estimation based on HAL was proposed by
\cite{rytgaard2022continuous,rytgaard2021estimation}.
\cite{fang2021multivariate} considered estimation of regression functions over
the class of functions with uniformly bounded Hardy-Krause variation
\citep{krause1903mittelwertsatze,hardy1906double,owen2005multidimensional,ChristophAistleitner2015},
which is closely related to the class of functions considered here.

The remainder of the article is organized as follows. In
Section~\ref{sec:funct-bound-sect} we introduce our notation and review the
properties of multivariate càdlàg functions with bounded sectional variation
norm. Section~\ref{sec:empir-risk-minim} contains a formal definition of the
general loss based estimation problem and two (potentially different)
estimators; the empirical risk minimizer and a HAL estimator. In
Section~\ref{sec:conv-rates-suff} we define a projection of \( \f \) onto a
data-adaptive sieve, which allows us to derive the asymptotic convergence rate
directly for the HAL estimator without assuming it to be identical to the
empirical risk minimizer. In
Sections~\ref{sec:censored-data}-\ref{sec:regression-function} we apply our
general results to special cases. In Section~\ref{sec:censored-data} we consider
the setting of censored survival data observed in continuous time, and show that
while the HAL estimator is well-defined, the empirical risk minimizer is in
general either ill-defined or inconsistent. In
Section~\ref{sec:density-estimation} we consider conditional density estimation
and propose a new estimator. Section~\ref{sec:regression-function} considers an
example from the regression setting, where the empirical risk minimizer is
well-defined, and we illustrate the dramatic reduction in the number of basis
functions needed to calculate the HAL estimator compared to the empirical risk
minimizer. Section~\ref{sec:discussion} contains a discussion of our results.
Appendices~\ref{sec:cadl-funct-meas}-\ref{sec:additional-proofs} contain proofs.

\section{Multivariate càdlàg functions with bounded sectional variation norm}
\label{sec:funct-bound-sect}

For \(d=1\) the definition of a càdlàg function is given by its name -- it is a
function that is continuous from the right with left-hand limits. When $d>1$ we
can approach a point from an infinite number of directions, and thus the
concepts `right' and `left' are not defined. In dimension \( d \in \N \) we
define càdlàg functions as follows. For any \(u \in [0,1]\) and \(a\in\{0,1\}\)
we define the interval
\begin{equation*}
  I_a(u) = 
  \begin{cases}
    [u, 1] & \text{if } a=1,\\
    [0, u) & \text{if } a=0.
  \end{cases}
\end{equation*}
For any \(\mathbf{u} =(u_1, \dots, u_d) \in [0,1]^d\) and
\(\mathbf{a} = (a_1, \dots, a_d) \in \{0,1\}^d\) define the quadrant
\(Q_{\mathbf{a}}(\mathbf{u}) = I_{a_1}(u_1) \times \cdots \times I_{a_d}(u_d)\).

\begin{definition}[Multivariate càdlàg function]
  \label{def:cadlag}
  A function \(f \colon [0,1]^d \rightarrow \R\) is \textit{càdlàg} if for all
  \(\mathbf{u} \in [0,1]^d\), \(\mathbf{a} \in\{0,1\}^d\), and any sequence
  \(\{\mathbf{u}_n\} \subset Q_{\mathbf{a}}(\mathbf{u}) \) which converges to
  \( \mathbf{u} \) as \(n\to\infty\), the limit
  \(\lim_{n\rightarrow\infty}f(\mathbf{u}_n)\) exists, and
  \(\lim_{n\rightarrow\infty}f(\mathbf{u}_n) = f(\mathbf{u})\) for
  \(\mathbf{a} = \mathbf{1}\).
\end{definition}

\cite{neuhaus1971weak} first generalized the concept of a càdlàg function to the
multivariate setting. Our Definition~\ref{def:cadlag} is an equivalent
definition used by, e.g., \cite{czerebak2008almost} and \cite{ferger2015arginf}.
We use \( \D \) to denote the collection of all càdlàg functions with domain
\( [0,1]^d \). The content of Defintion~\ref{def:cadlag} is illustrated in
Figure~\ref{fig:cadlag-def} for \( d=2 \).

\begin{figure}
  \centerline{\includegraphics[width=1\linewidth,trim={0.6cm 0 0 0},clip]{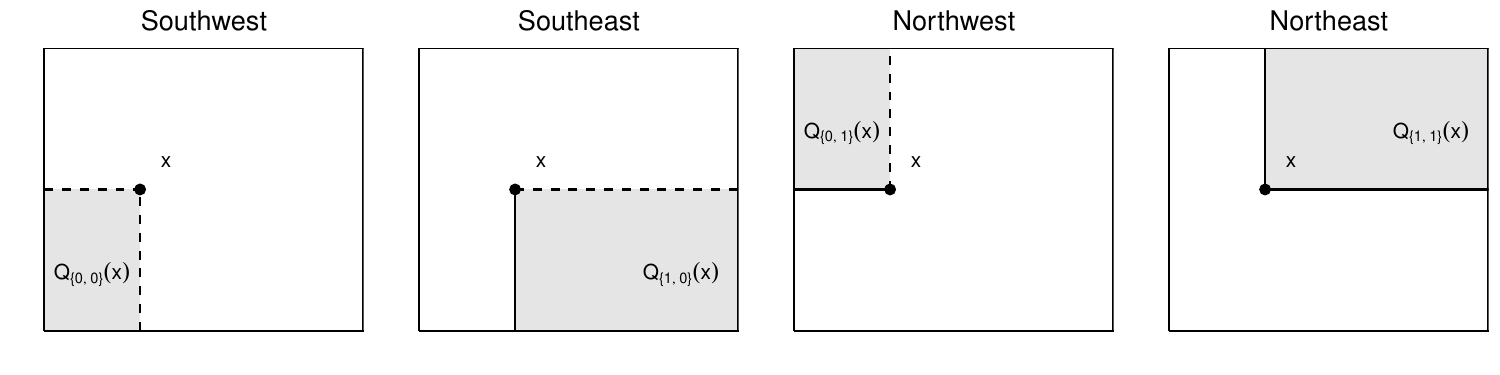}}
  \caption[]{The four quadrants \( Q_{\{0,0\}}(\mathbf{x})\),
    \( Q_{\{1,0\}}(\mathbf{x}) \), \( Q_{\{0,1\}}(\mathbf{x}) \), and
    \( Q_{\{1,1\}}(\mathbf{x}) \) spanned by the point
    \(\mathbf{x} \in [0,1]^2\) and each of the vertices in the unit square. A
    sequence which is contained in one of the quadrants and converges to
    \( u \), converges from `southwest', `southeast', `northwest', or
    `northeast'. That the function \( f \) is càdlàg means that the limit of
    the function \(f\) should exist when we approach it from any of these four
    directions, and the limit should agree with the function value at \(u\) when
    we approach it from `northeast'.}
  \label{fig:cadlag-def}
\end{figure}

A bit of notation is needed to formally define the section of a càdlàg function
and the sectional variation norm. For any non-empty subset
\(\S \subset [d] = \{1, \dots, d\}\) let
$\pi_{\S}\colon\{1, \dots, |\S|\} \rightarrow [d]$ be the unique increasing
function such that \(\mathrm{Im}{(\pi_{\S})}=\S\), i.e., $\pi_{\S}$ provides the
ordered indices of \(1, \dots, d \) included in \(\S\). For any
\( \mathbf{x} \in [0,1]^d\) we define the \( \S \)-section of the vector
\( \mathbf{x} \) as
\begin{equation*}
  \mathbf{x}_{\S} = (x_{\pi_{\S}(1)}, \dots, x_{\pi_{\S}(|{\S}|)}) \in
  [0,1]^{|{\S}|},
\end{equation*}
i.e., $\mathbf{x}_{\S}$ is the ordered tuple in \( [0,1]^{|{\S}|}\) consisting
of all components of \( \mathbf{x} \) with index in \({\S}\). Note that for a
singleton \({\S}= \{i\}\), we have \(\mathbf{x}_{\{i\}}=x_i\). Defining
\begin{equation*}
  \overline{\mathbf{x}}_s = (\1{\{1 \in \S\}}x_1, \1{\{2 \in \S\}}x_2, \dots,
  \1{\{d \in \S\}}x_d) \in [0,1]^d.
\end{equation*}
the \( \S \)-section of \( f \) is the function
\begin{equation*}
  f_{\S} \colon [0,1]^{|\S|} \longrightarrow \R  
  \quad \text{such that} \quad
  f_{\S}(\mathbf{x}_{\S}) = f(\overline{\mathbf{x}}_{\S}),
  \quad\forall \mathbf{x} \in [0,1]^d.
\end{equation*}
In words, \( f_{\S} \) is the function that appears when all arguments of
\( f \) that are not in \( \S \) are fixed at zero. For vectors
\(\mathbf{a}, \mathbf{b} \in [0,1]^d\) we write
\begin{align*}
  & \mathbf{a} \preceq \mathbf{b} \quad \text{if} \quad a_k \leq b_k,
    \quad \text{for} \quad k = 1, \dots, d,
  \\
  & \mathbf{a} \prec \mathbf{b} \quad \text{if} \quad a_k < b_k,
    \quad \text{for} \quad k = 1, \dots, d,
\end{align*}
and we define closed and half-open boxes by
\([\mathbf{a}, \mathbf{b}] = \{ \mathbf{x} \in [0,1]^d : \mathbf{a} \preceq
\mathbf{x} \preceq \mathbf{b} \}\) and
\((\mathbf{a}, \mathbf{b}] = \{ \mathbf{x} \in [0,1]^d : \mathbf{a} \prec
\mathbf{x} \preceq \mathbf{b} \}\), respectively. For a box
\( A=(\mathbf{a}, \mathbf{b}] \subset [0,1]^d \) with
\( \mathbf{a} \prec \mathbf{b} \), let
\begin{equation*}
  \mathcal{V}(A) = \{\mathbf{v} = (v_1, \dots, v_d) : v_i = a_i \text{ or } v_i
  = b_i\}
\end{equation*}
denote the set of vertices of the box \(A\). The \textit{quasi-volume} assigned
to the box \(A=(\mathbf{a}, \mathbf{b}]\) by the function \(f\) is
\begin{equation*}
  \Delta(f ; A) = \sum_{\mathbf{v} \in \mathcal{V}(A)}
  (-1)^{H(\mathbf{v})}f(\mathbf{v}),
  \quad \text{with} \quad
  H(\mathbf{v}) = \sum_{k=1}^{d}\1{\{v_k=a_k\}}.
\end{equation*}
The idea is that the volume of the box \( A \) as measured by \( f \) can be
computed by calculating volumes of boxes with corners at \( \mathbf{0} \), as
illustrated in Figure~\ref{fig:vitali} for \( d=2 \).
\begin{figure}
  \centerline{\includegraphics[width=1\linewidth,trim={0.6cm 0 0 0},clip]{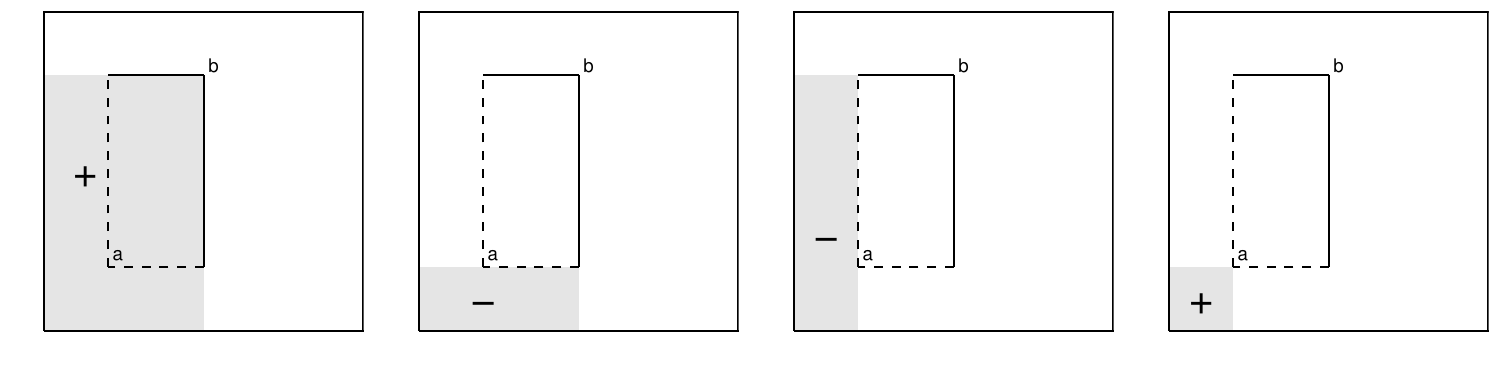}}
  \caption[]{The area of the box \( (\mathbf{a}, \mathbf{b}] \) can be
    calculated by first calculating the gray area in the leftmost figure,
    subtracting the gray areas in the two middle figures, and then adding the
    gray area in the rightmost figure.}
  \label{fig:vitali}
\end{figure}
Let \(\rho\) denote a finite partition of \((0,1]\) given by
\begin{equation*}
  \rho = \{ (x_{l-1}, x_{l}] : l=1, \dots, L\},
  \quad \text{with} \quad 0=x_0 < x_1 < \cdots < x_L =1.
\end{equation*}
For any collection \(\rho_1, \dots, \rho_d\) of univariate partitions, we define
a partition \(\mathcal{P}\) of \((0, 1]^d\) by
\begin{equation}
  \label{eq:partition}
  \mathcal{P} = \{ I_1 \times I_2 \times \cdots \times I_d : I_k \in \rho_k,
  k=1, \dots, d\}.
\end{equation}
For a function \(f \colon [0,1]^d \rightarrow \R\) the \textit{Vitali variation}
is defined as
\begin{equation*}
  V(f) = \sup_{\mathcal{P}} \sum_{A\in \mathcal{P}} |\Delta(f;A)|,
\end{equation*}
where the supremum is taken over all partitions given by
equation~(\ref{eq:partition}). The \textit{sectional variation norm} of a
function \( f\colon [0,1]^d\rightarrow\R \) is the sum of the Vitali variation
of all its sections plus the absolute value of the function at \( \mathbf{0} \),
i.e.,
\begin{equation*} 
  \| f \|_v = |f(\mathbf{0})| +
  \sum_{\S \in \mathcal{S}} 
  V(f_{\S}),
  \quad \text{with} \quad
  \mathcal{S} = \{ s \subset [d] : s \not = \emptyset\}.
\end{equation*}
For \(M\in(0,\infty)\) we use \( \D_M \) to denote the space of càdlàg functions
\(f\colon [0,1]^d \rightarrow \R\) with \(\|f\|_v \leq M \).

We now give two alternative descriptions of \( \D_M \). The first characterizes
\( \D_M \) as the closure of the collection of rectangular piece-wise constant
functions (Proposition~\ref{prop:discrete-measure-closure}), and the second puts
\( \D_M \) into a one-to-one correspondence with finite signed measures
(Proposition~\ref{prop:cadlag-measure}).

Define the function space
\( \mathcal{F}^d = \{ \1_{[\mathbf{x}, \mathbf{1}]} : \mathbf{x} \in [0,1]^d\}
\), let \( \mathrm{Span}{(\mathcal{F}^d)} \) denote all linear combinations of
elements from \( \mathcal{F}^d \), and define
\( \mathcal{R}^d_M = \{f \in \mathrm{Span}{(\mathcal{F}^d)} : \|f\|_v \leq M \}
\). An example of an element in \( \mathcal{F}^d \) is shown in
Figure~\ref{fig:piece-wise}~(\subref{fig:piece-wise1}).
\begin{proposition}
  \label{prop:discrete-measure-closure}
  Consider \( \mathcal{R}_M^d \) and \( \D_M \) as subspaces of the Banach space
  of all bounded functions \( f \colon [0,1]^d \rightarrow \R \) equipped with
  the supremum norm. Then \( \D_M = \overline{\mathcal{R}_M^d} \), that is,
  \( \mathcal{R}_M^d \subset \D_M \) and for any function \( f \in \D_M \) there
  exists a sequence of functions \( \{f_n\} \subset \mathcal{R}_M^d \) such that
  \( \Vert f- f_n\Vert_{\infty} \rightarrow 0\).
\end{proposition}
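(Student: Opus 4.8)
The plan is to prove $\mathcal{R}_M^d \subseteq \D_M$, to show that $\D_M$ is closed in the Banach space of bounded functions on $[0,1]^d$ equipped with $\|\cdot\|_\infty$, and finally to prove the reverse inclusion $\D_M \subseteq \overline{\mathcal{R}_M^d}$; together these give $\D_M = \overline{\mathcal{R}_M^d}$.

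For $\mathcal{R}_M^d \subseteq \D_M$ I would first verify that every generator $\1_{[\mathbf{x},\mathbf{1}]}$ is càdlàg by evaluating the one-sided limits in Definition~\ref{def:cadlag} directly: along a sequence $\{\mathbf{u}_n\}\subset Q_{\mathbf{a}}(\mathbf{u})$ converging to $\mathbf{u}$ one has $\1_{[\mathbf{x},\mathbf{1}]}(\mathbf{u}_n)\to\prod_{k:a_k=1}\1\{u_k\ge x_k\}\,\prod_{k:a_k=0}\1\{u_k>x_k\}$, which for $\mathbf{a}=\mathbf{1}$ equals $\1_{[\mathbf{x},\mathbf{1}]}(\mathbf{u})$. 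Since $\D$ is a vector space, $\mathrm{Span}(\mathcal{F}^d)\subseteq\D$, and intersecting both sides with $\{f:\|f\|_v\le M\}$ gives $\mathcal{R}_M^d\subseteq\D_M$. For closedness of $\D_M$: a uniform limit of càdlàg functions is again càdlàg by a routine $\epsilon/3$ argument, and $f\mapsto\|f\|_v$ is lower semicontinuous for $\|\cdot\|_\infty$, because for each fixed partition $\mathcal{P}$ and each $\S$ the functional $f\mapsto\sum_{A\in\mathcal{P}}|\Delta(f_{\S};A)|$ depends on only finitely many point evaluations and is $\|\cdot\|_\infty$-continuous, so $V(f_{\S})$ is a supremum of continuous functionals and hence l.s.c., while $f\mapsto|f(\mathbf{0})|$ is continuous. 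Thus $f_n\to f$ uniformly with $f_n\in\D_M$ forces $\|f\|_v\le\liminf_n\|f_n\|_v\le M$, so $\D_M$ is closed and $\overline{\mathcal{R}_M^d}\subseteq\D_M$.

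The substance of the argument is $\D_M\subseteq\overline{\mathcal{R}_M^d}$. Fix $f\in\D_M$ and $\epsilon>0$. The crucial step is to produce finite sets of breakpoints $0=x^{(k)}_0<\cdots<x^{(k)}_{N_k}=1$, one per coordinate $k\in[d]$, such that $f$ has oscillation strictly less than $\epsilon$ on every cell $C$ of the induced product partition, a cell being a product of intervals $[x^{(k)}_{l_k},x^{(k)}_{l_k+1})$ with the last interval in each coordinate taken closed at $1$. This is the multivariate counterpart of the elementary fact that a one-dimensional càdlàg function is a uniform limit of step functions; I would get it by a compactness argument on $[0,1]^d$, using the existence of all $2^d$ one-sided limits at each point (Definition~\ref{def:cadlag}) to cover the cube by finitely many half-open boxes of oscillation $<\epsilon$ and taking the grid generated by their corner coordinates. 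Given such a partition, let $g$ be the step function equal to $f(\mathbf{v}_C)$ on each cell $C$, where $\mathbf{v}_C$ is the lower corner of $C$. Then: (i) $g\in\mathrm{Span}(\mathcal{F}^d)$, since on the finite product grid the indicators $\1_{[\mathbf{v},\mathbf{1}]}$ indexed by grid corners $\mathbf{v}$ span all functions that are constant on the (left-closed) grid cells, the change of basis being the quasi-volume map $\Delta(f;\blank)$; (ii) $\|g\|_v\le\|f\|_v\le M$, since $g$ agrees with $f$ at the lower corners of the grid and a short computation — using that the ``top'' grid boxes carry zero quasi-volume for $g$ — gives $V(g_{\S})=\sum_{A}|\Delta(g_{\S};A)|\le\sum_{A}|\Delta(f_{\S};A)|\le V(f_{\S})$ for each $\S$, with the sums over grid cells, together with $g(\mathbf{0})=f(\mathbf{0})$; hence $g\in\mathcal{R}_M^d$; and (iii) $\|f-g\|_\infty<\epsilon$, because $\mathbf{v}_C\in C$, so for $\mathbf{x}\in C$ we have $|f(\mathbf{x})-g(\mathbf{x})|=|f(\mathbf{x})-f(\mathbf{v}_C)|<\epsilon$. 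Letting $\epsilon\downarrow0$ yields a sequence in $\mathcal{R}_M^d$ converging uniformly to $f$, so $f\in\overline{\mathcal{R}_M^d}$.

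The main obstacle is the construction of the jump-adapted product partition on which $f$ has small oscillation on every cell: one must control all $2^d$ approach directions simultaneously and deal with the faces of $[0,1]^d$. I would prove it either by the one-dimensional interval-exhaustion argument (the set of $t$ up to which $f$ admits an $\epsilon$-fine partition is shown to be nonempty, relatively open, and relatively closed) combined with induction on $d$, or by invoking the known fact that càdlàg functions in the sense of Definition~\ref{def:cadlag} are uniform limits of step functions. As a by-product this also shows that every $f\in\D$ is bounded, which is what lets us regard $\D_M$ as a subset of the Banach space of bounded functions; everything else — the càdlàg property of the generators, the linear-span identity, lower semicontinuity of $\|\cdot\|_v$, and the variation bound — is routine bookkeeping.
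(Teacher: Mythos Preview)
Your proposal is correct but follows a genuinely different route from the paper for the key inclusion $\D_M\subseteq\overline{\mathcal{R}_M^d}$. You construct a deterministic step approximant on a jump-adapted product grid, verify $\|g\|_v\le\|f\|_v$ by comparing grid quasi-volumes (using that $g=f$ at interior grid vertices while boxes touching the upper boundary have $\Delta(g_{\S};A)=0$), and correctly flag the existence of such a grid as the main technical hurdle. The paper instead proceeds via measure theory and probability (its Lemma~\ref{lemma:DKF-argument}): it invokes the Aistleitner--Dick correspondence to write $f(\mathbf{x})=\mu_f([\mathbf{0},\mathbf{x}])$ for a signed measure $\mu_f$, applies the Jordan--Hahn decomposition $\mu_f=\alpha P^+-\beta P^-$ with $P^+\perp P^-$, and then uses empirical CDFs drawn from $P^{\pm}$ together with the multivariate Dvoretzky--Kiefer--Wolfowitz inequality and Borel--Cantelli to extract a deterministic sequence $f_n=\alpha f_n^+-\beta f_n^-\in\mathcal{R}_M^d$ satisfying $\|f_n\|_v=\alpha+\beta=\|f\|_v$ exactly (by singularity of the empirical measures) and $\|f_n-f\|_\infty\to0$. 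Your argument is more elementary and fully constructive, needing neither the signed-measure representation nor any probabilistic input; the paper's argument is shorter once the measure correspondence is in hand (which it establishes anyway for Proposition~\ref{prop:cadlag-measure}) and neatly sidesteps the product-partition construction you identify as the crux. The closedness direction in your proposal (lower semicontinuity of $\|\cdot\|_v$) is essentially the content of the paper's Lemma~\ref{lemma:unif-limit}, argued there by contradiction rather than via l.s.c.
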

\begin{proof}
  See Appendix~\ref{sec:cadl-funct-meas}.
\end{proof}
In the following, let
\( \| \mu \|_{\mathrm{TV}} = \mu_+([0,1]^d) + \mu_-([0,1]^d) \) denote the total
variation norm of the measure \( \mu \).
\begin{proposition}
  \label{prop:cadlag-measure}
  For any \( f \in \D_M \) there exists a unique signed measure \( \mu_f \) on
  \( [0,1]^d \) such that
  \begin{equation*}
    f(\mathbf{x}) = \mu_f([\mathbf{0},\mathbf{x}]), \quad \forall \mathbf{x} \in [0,1]^d,
  \end{equation*}
  and \( \| \mu_f \|_{\mathrm{TV}} = \| f \|_v \). For any signed measure
  \( \mu \) on \( [0,1]^d \) with \( \| \mu_f \|_{\mathrm{TV}} \leq M \) there
  exists a unique function \( f_{\mu} \in \D_M \) such that
  \begin{equation*}
    f_{\mu}(\mathbf{x}) = \mu([\mathbf{0},\mathbf{x}]), \quad \forall \mathbf{x} \in [0,1]^d.
  \end{equation*}
\end{proposition}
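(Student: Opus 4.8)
The plan is to prove the two halves separately, obtaining the harder direction --- from a function to a measure --- by combining Proposition~\ref{prop:discrete-measure-closure} with a weak-$*$ compactness argument, and reading off the norm identity from the easier direction. For the direction from a measure to a function, given a finite signed measure $\mu$ set $f_\mu(\mathbf{x}) = \mu([\mathbf{0},\mathbf{x}])$. Continuity of the finite positive measure $|\mu|$ shows $f_\mu$ is càdlàg: if $\{\mathbf{u}_n\} \subset Q_{\mathbf{a}}(\mathbf{u})$ converges to $\mathbf{u}$, then the $|\mu|$-measure of the symmetric difference between $[\mathbf{0},\mathbf{u}_n]$ and the box $\prod_{k : a_k = 1}[0,u_k] \times \prod_{k : a_k = 0}[0,u_k)$ tends to $0$, and for $\mathbf{a} = \mathbf{1}$ this box is $[\mathbf{0},\mathbf{u}]$. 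Inclusion--exclusion gives $\Delta(f_\mu ; (\mathbf{a},\mathbf{b}]) = \mu((\mathbf{a},\mathbf{b}])$ for $\mathbf{a} \prec \mathbf{b}$, and the same identity applied to the $\S$-section shows $\Delta((f_\mu)_{\S} ; (\mathbf{a},\mathbf{b}]) = \mu_{\S}((\mathbf{a},\mathbf{b}])$, where $\mu_{\S}$ is the image on $[0,1]^{|\S|}$, under $\mathbf{y} \mapsto \mathbf{y}_{\S}$, of the restriction of $\mu$ to the relatively open face $F_{\S} = \{\mathbf{y} : y_k = 0 \text{ for } k \notin \S, \ y_k > 0 \text{ for } k \in \S\}$. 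Taking the supremum over partitions and using outer regularity of $\mu_{\S}$ to approximate its Hahn sets by finite unions of half-open boxes yields $V((f_\mu)_{\S}) = |\mu|(F_{\S})$; since $|f_\mu(\mathbf{0})| = |\mu|(F_{\emptyset})$ and $\{F_{\S}\}_{\S \subseteq [d]}$ partitions $[0,1]^d$, summing gives $\|f_\mu\|_v = |\mu|([0,1]^d) = \|\mu\|_{\mathrm{TV}}$. In particular $f_\mu \in \D_M$ whenever $\|\mu\|_{\mathrm{TV}} \le M$, and uniqueness of $f_\mu$ is immediate from its definition; this is the second assertion.

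For the direction from a function to a measure, first take $f = \sum_j c_j \1_{[\mathbf{x}_j,\mathbf{1}]} \in \mathcal{R}^d_M$ and set $\mu_f = \sum_j c_j \delta_{\mathbf{x}_j}$; since $\1_{[\mathbf{x},\mathbf{1}]}(\mathbf{y}) = \1\{\mathbf{x} \preceq \mathbf{y}\} = \delta_{\mathbf{x}}([\mathbf{0},\mathbf{y}])$ we get $f = f_{\mu_f}$, hence $\|\mu_f\|_{\mathrm{TV}} = \|f\|_v \le M$ by the first part. For a general $f \in \D_M$, Proposition~\ref{prop:discrete-measure-closure} supplies $\{f_n\} \subset \mathcal{R}^d_M$ with $\|f - f_n\|_\infty \to 0$; let $\mu_n$ be the corresponding discrete measures, so $\|\mu_n\|_{\mathrm{TV}} \le M$. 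Since the signed measures on $[0,1]^d$ form the dual of the separable space $C([0,1]^d)$, Banach--Alaoglu (the relevant ball being metrizable) gives a subsequence $\mu_{n_k} \weakly \mu$, and after passing to a further subsequence we may also assume $\mu_{n_k}^{+} \weakly \nu^{+}$ and $\mu_{n_k}^{-} \weakly \nu^{-}$ for finite positive measures with $\mu = \nu^{+} - \nu^{-}$. Let $h = f_\mu \in \D_M$ (by the first part); it remains to show $h = f$.

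This identification step is the main obstacle, because $\1_{[\mathbf{0},\mathbf{x}]}$ is not continuous, so $f_n(\mathbf{x}) = \mu_n([\mathbf{0},\mathbf{x}]) \to h(\mathbf{x})$ is not automatic. To get around this, note that the finite measure $\nu^{+} + \nu^{-}$ charges the hyperplane $\{y_k = t\}$ for at most countably many $t$, for each $k$, so for $\mathbf{x}$ outside a fixed countable union of axis-hyperplanes we have $(\nu^{+} + \nu^{-})(\partial[\mathbf{0},\mathbf{x}]) = 0$, whence $\mu_{n_k}([\mathbf{0},\mathbf{x}]) \to \mu([\mathbf{0},\mathbf{x}])$ by the portmanteau theorem applied to $\nu^{\pm}$. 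Thus $f = h$ on a set $D$ whose complement meets each coordinate in only countably many values, and since every $\mathbf{u} \in [0,1]^d$ is the limit of a sequence in $D \cap Q_{\mathbf{1}}(\mathbf{u})$, the càdlàg property of $f$ and of $h$ forces $f = h$. Hence we may take $\mu_f = \mu$, and then $\|\mu_f\|_{\mathrm{TV}} = \|f_{\mu_f}\|_v = \|f\|_v$ by the first part. Uniqueness follows because $\{[\mathbf{0},\mathbf{x}] : \mathbf{x} \in [0,1]^d\}$ is a $\pi$-system containing $[0,1]^d$ that generates the Borel $\sigma$-algebra, so two finite signed measures agreeing on all these boxes coincide. (The measure $\mu_f$ could alternatively be built directly from the premeasure $\Delta(f ; \cdot)$ on half-open boxes, finite additivity coming from telescoping and $\sigma$-additivity from the càdlàg property together with compactness of $[0,1]^d$, followed by the Hahn--Kolmogorov extension theorem; the route above simply reuses Proposition~\ref{prop:discrete-measure-closure} instead.)
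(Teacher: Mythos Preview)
Your proof is internally sound and takes a genuinely different route from the paper's. The paper's argument is essentially a citation: it invokes Theorem~3 of \citet{ChristophAistleitner2015} for both directions of the correspondence (coordinatewise right-continuous functions of bounded variation $\leftrightarrow$ finite signed measures, including the norm identity), and then uses Lemmas~\ref{lemma:unif-limit} and~\ref{lemma:DKF-argument} only to upgrade ``coordinatewise right-continuous'' to the full càdlàg property in the measure-to-function direction. You instead prove the measure-to-function half from scratch --- the face decomposition $\{F_{\S}\}_{\S\subset[d]}$ and the regularity step yielding $V((f_\mu)_{\S})=|\mu|(F_{\S})$ are correct --- and for the function-to-measure half you approximate via Proposition~\ref{prop:discrete-measure-closure}, extract a weak-$*$ limit by Banach--Alaoglu, and identify it using a portmanteau argument on boxes whose upper faces avoid the countably many hyperplanes charged by $\nu^+ + \nu^-$, followed by right-continuity to extend to all of $[0,1]^d$.

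There is, however, a structural circularity you should be aware of. In this paper, Proposition~\ref{prop:discrete-measure-closure} is proved via Lemma~\ref{lemma:DKF-argument}, and the very first line of that lemma's proof invokes Theorem~3(a) of \citet{ChristophAistleitner2015} to produce the signed measure $\mu_f$ with $f(\mathbf{x})=\mu_f([\mathbf{0},\mathbf{x}])$ --- precisely the function-to-measure half of the proposition you are proving. So within the paper's dependency graph, routing through Proposition~\ref{prop:discrete-measure-closure} does not bypass the Aistleitner--Dick result; it reaches it indirectly. The parenthetical alternative you sketch at the end --- extending the premeasure $A\mapsto\Delta(f;A)$ from half-open boxes by Carath\'eodory/Hahn--Kolmogorov, with countable additivity coming from right-continuity together with compactness of $[0,1]^d$ --- is the genuinely independent construction, and is essentially how \citet{ChristophAistleitner2015} proceed. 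If a self-contained proof is the goal, that alternative should be the main line rather than a remark; as written, your argument is a correct reduction to Proposition~\ref{prop:discrete-measure-closure} but does not buy independence from the external reference the paper already relies on.
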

\begin{proof}
  A similar result is proved by \cite{ChristophAistleitner2015}. We use their
  result in our proof in Appendix~\ref{sec:cadl-funct-meas} which is for càdlàg
  functions.
\end{proof}
Proposition~\ref{prop:cadlag-measure} shows that the class of
functions considered by \cite{fang2021multivariate} is identical to
the class \( \D_M \) up to a constant.

Based on Proposition~\ref{prop:cadlag-measure} we can define the integral with
respect to a function \( f \in \D_M\) as the integral with respect to the
measure \( \mu_f \). We use the notation \( \diff f = \diff \mu_f\) and
\( |\diff f| = \diff| \mu_f|\), where \( |\mu| = \mu_+ + \mu_- \). The
connection between càdlàg functions and measures is the key component underlying
the HAL estimator. The HAL estimator is motivated by the following
representation of functions in \( \D_M \) which is due to
\cite{gill1995inefficient} and \cite{van2017generally}.
\begin{proposition}
  \label{prop:cadlag-repr}
  For any \( f \in \D_M \) we can write
  \begin{equation*}
    f(\mathbf{x}) = f(\mathbf{0})
    + \sum_{\S  \in \mathcal{S}} \int_{[0,1]^{|s|}} \1_{(\mathbf{0}_{\S}, \mathbf{x}_{\S}]}(\mathbf{u}) \diff f_{\S}(\mathbf{u}),
  \end{equation*}
  and
  \begin{equation*}
    \Vert f \Vert_{v}
    =
    |f(\mathbf{0})| +
    \sum_{\S \in \mathcal{S}} \int_{[0,1]^{|s|}} \1_{(\mathbf{0}_{\S}, \mathbf{1}_{\S}]}(\mathbf{u}) |\diff f_{\S}|(\mathbf{u}),
\end{equation*}
 where \(  \mathcal{S} = \{ s \subset [d] : s \not = \emptyset\} \).
\end{proposition}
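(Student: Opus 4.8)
The plan is to obtain the representation from the measure correspondence of Proposition~\ref{prop:cadlag-measure} together with one combinatorial decomposition of the lower box $[\mathbf{0},\mathbf{x}]$ into its faces. As a preliminary step I would check that for every nonempty $\S\subseteq[d]$ the section $f_{\S}$ is again càdlàg on $[0,1]^{|\S|}$ with $\|f_{\S}\|_v\le\|f\|_v\le M$. The càdlàg property of $f_{\S}$ is inherited from Definition~\ref{def:cadlag} via the coordinate embedding $\mathbf{u}\mapsto\overline{\mathbf{u}}_{\S}$: a sequence approaching $\mathbf{u}$ from a quadrant of $[0,1]^{|\S|}$ maps to a sequence approaching $\overline{\mathbf{u}}_{\S}$ from the quadrant of $[0,1]^d$ carrying label $1$ on every coordinate outside $\S$ (admissible since those coordinates sit at $0\in I_1(0)$), and the requirement at $\mathbf{a}=\mathbf{1}$ transfers because the corresponding quadrant in $[0,1]^d$ also carries label $\mathbf{1}$. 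The norm bound holds because the sections of $f_{\S}$ are themselves sections of $f$, namely $(f_{\S})_{\S'}=f_{T}$ with $T=\{\pi_{\S}(l):l\in\S'\}$, so that $\|f_{\S}\|_v=|f(\mathbf{0})|+\sum_{\emptyset\neq T\subseteq\S}V(f_T)$ is a subsum of $\|f\|_v$. Proposition~\ref{prop:cadlag-measure}, applied in dimension $|\S|$, then provides a unique finite signed measure $\mu_{f_{\S}}$ on $[0,1]^{|\S|}$ with $f_{\S}(\mathbf{u})=\mu_{f_{\S}}([\mathbf{0},\mathbf{u}])$ and $\|\mu_{f_{\S}}\|_{\mathrm{TV}}=\|f_{\S}\|_v$, and by definition $\diff f_{\S}=\diff\mu_{f_{\S}}$.

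Writing $[0,x_i]=\{0\}\sqcup(0,x_i]$ in each coordinate yields the disjoint decomposition $[\mathbf{0},\mathbf{x}]=\bigsqcup_{\S\subseteq[d]}F_{\S}(\mathbf{x})$, where $F_{\S}(\mathbf{x})=\{\mathbf{y}\in[0,1]^d:y_i\in(0,x_i]\text{ for }i\in\S,\ y_i=0\text{ for }i\notin\S\}$. Applying $\mu_f$ and using $f(\mathbf{x})=\mu_f([\mathbf{0},\mathbf{x}])$ gives $f(\mathbf{x})=\sum_{\S\subseteq[d]}\mu_f(F_{\S}(\mathbf{x}))$, whose $\S=\emptyset$ term is $\mu_f(\{\mathbf{0}\})=f(\mathbf{0})$. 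So the identity reduces to proving, for each nonempty $\S$,
\begin{equation*}
  \mu_f(F_{\S}(\mathbf{x}))=\int_{[0,1]^{|\S|}}\1_{(\mathbf{0}_{\S},\mathbf{x}_{\S}]}(\mathbf{u})\,\diff f_{\S}(\mathbf{u})=\mu_{f_{\S}}\bigl((\mathbf{0}_{\S},\mathbf{x}_{\S}]\bigr).
\end{equation*}
Let $\nu_{\S}$ be the signed measure on $[0,1]^{|\S|}$ obtained by pushing $\mu_f$ restricted to the $\S$-face $\{\mathbf{y}:y_i=0\ \forall i\notin\S\}$ forward along $\mathbf{y}\mapsto\mathbf{y}_{\S}$; then $\mu_f(F_{\S}(\mathbf{x}))=\nu_{\S}((\mathbf{0}_{\S},\mathbf{x}_{\S}])$, so it suffices to prove $\nu_{\S}=\mu_{f_{\S}}$ on the Borel subsets of $(0,1]^{|\S|}$.

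The heart of the argument is that $\nu_{\S}$ and $\mu_{f_{\S}}$ both assign to every half-open box $(\mathbf{a},\mathbf{b}]\subseteq(0,1]^{|\S|}$ the quasi-volume $\Delta(f_{\S};(\mathbf{a},\mathbf{b}])$. For $\mu_{f_{\S}}$ this is the identity $\1_{(\mathbf{a},\mathbf{b}]}(\mathbf{u})=\prod_{l}\bigl(\1_{\{u_l\le b_l\}}-\1_{\{u_l\le a_l\}}\bigr)$ integrated against $\mu_{f_{\S}}$, using $\mu_{f_{\S}}([\mathbf{0},\mathbf{v}])=f_{\S}(\mathbf{v})$; for $\nu_{\S}$ one expands the same product over the $\S$-coordinates with the coordinates outside $\S$ frozen at $0$ (using $\{y_i=0\}=\{y_i\le0\}$), integrates against $\mu_f$, and uses $\mu_f([\mathbf{0},\overline{\mathbf{v}}_{\S}])=f(\overline{\mathbf{v}}_{\S})=f_{\S}(\mathbf{v})$. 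Both evaluations collapse to the alternating sum $\sum_{\mathbf{v}\in\mathcal{V}((\mathbf{a},\mathbf{b}])}(-1)^{H(\mathbf{v})}f_{\S}(\mathbf{v})$. Since the half-open boxes contained in $(0,1]^{|\S|}$ form a semiring generating its Borel $\sigma$-algebra, two finite signed measures that agree on all of them agree everywhere on $(0,1]^{|\S|}$, which closes the pointwise identity. For the norm identity I would repeat the decomposition at $\mathbf{x}=\mathbf{1}$, so that $[0,1]^d=\bigsqcup_{\S}F_{\S}(\mathbf{1})$, and apply the total variation measure $|\mu_f|$: by Proposition~\ref{prop:cadlag-measure}, $\|f\|_v=\|\mu_f\|_{\mathrm{TV}}=|\mu_f|([0,1]^d)=|f(\mathbf{0})|+\sum_{\S\neq\emptyset}|\mu_f|(F_{\S}(\mathbf{1}))$, and since $\mathbf{y}\mapsto\mathbf{y}_{\S}$ restricts to a Borel isomorphism of $F_{\S}(\mathbf{1})$ onto $(0,1]^{|\S|}$ we get $|\mu_f|(F_{\S}(\mathbf{1}))=|\nu_{\S}|((0,1]^{|\S|})=|\mu_{f_{\S}}|((0,1]^{|\S|})=\int_{[0,1]^{|\S|}}\1_{(\mathbf{0}_{\S},\mathbf{1}_{\S}]}(\mathbf{u})\,|\diff f_{\S}|(\mathbf{u})$.

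The main obstacle is the identification $\nu_{\S}=\mu_{f_{\S}}$ on the interior $(0,1]^{|\S|}$, and there are two traps to avoid. First, the two measures do \emph{not} coincide globally: $\mu_{f_{\S}}$ generally carries mass on the lower faces $\{u_i=0\}$ of $[0,1]^{|\S|}$ (the contributions of still lower-dimensional sections of $f$), whereas $\nu_{\S}$ is concentrated on $(0,1]^{|\S|}$, so one must keep the restriction to the interior explicit throughout. Second, upgrading ``agreement on all boxes'' to agreement of \emph{signed} measures is not the $\pi$--$\lambda$ theorem; it goes through the algebra generated by the boxes and the uniqueness of the Hahn--Jordan decomposition of a finite signed measure. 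Everything else --- the face decomposition, the two inclusion--exclusion expansions, and the bookkeeping over the $2^d$ subsets --- is routine.
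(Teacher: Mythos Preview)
Your proposal is correct and follows essentially the same route as the paper: the face decomposition $[\mathbf{0},\mathbf{x}]=\{\mathbf{0}\}\sqcup\bigsqcup_{\S}F_{\S}(\mathbf{x})$, the identification of $\mu_f$ restricted to each face with $\mu_{f_{\S}}$ on the interior $(\mathbf{0}_{\S},\mathbf{1}_{\S}]$ via the uniqueness in Proposition~\ref{prop:cadlag-measure}, and the mutual singularity of the face pieces for the total variation identity. Your treatment is in fact more careful than the paper's, which asserts the key step $\mu_f(A(\mathbf{x};\S))=\mu_{f_{\S}}((\mathbf{0}_{\S},\mathbf{x}_{\S}])$ by a one-line appeal to uniqueness; your explicit box-by-box verification and the warning that $\nu_{\S}$ and $\mu_{f_{\S}}$ only agree on the interior are welcome additions.
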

\begin{proof}
  See Appendix~\ref{sec:cadl-funct-meas}.
\end{proof}

Proposition~\ref{prop:discrete-measure-closure} showed that \( \D_M \) is the
closure of the piece-wise constant functions \( \mathcal{R}_M^d \).
Proposition~\ref{prop:pw-cons-cadlag} implies that piece-wise constant functions
that are not in \( \mathcal{R}^d_M \), like the ones in
Figures~\ref{fig:piece-wise}~(\subref{fig:piece-wise2})~and~(\subref{fig:piece-wise3}),
are not càdlàg.

\begin{proposition}
  \label{prop:pw-cons-cadlag}
  Let \( f : [0,1]^d \rightarrow \mathcal{K} \subset \R \) for some finite set
  \( \mathcal{K} \). If \( f \not \in \mathcal{R}_M^d \) then \( f \) is not càdlàg.
\end{proposition}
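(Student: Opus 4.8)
The plan is to prove the contrapositive: if $f \colon [0,1]^d \to \mathcal{K}$ takes finitely many values and \emph{is} càdlàg, then $f \in \mathcal{R}_M^d$ for $M = \|f\|_v$ (which is automatically finite, as I will note). So assume $f$ is càdlàg with finite range $\mathcal{K}$. The first step is to show that the level sets of $f$ are, up to boundary effects, finite unions of boxes. Because $\mathcal{K}$ is finite, for each $c \in \mathcal{K}$ the set $f^{-1}(\{c\})$ is both relatively open ``from the northeast'' and relatively closed along each of the $2^d$ approach directions dictated by Definition~\ref{def:cadlag}; combined with the fact that there are only finitely many values, the discontinuity set of $f$ must be contained in a finite grid. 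Concretely, I would argue that there exist finitely many points $0 = t_0^{(k)} < t_1^{(k)} < \cdots < t_{m_k}^{(k)} = 1$ in each coordinate $k$ such that $f$ is constant on every half-open box $(\mathbf{a}, \mathbf{b}]$ whose corners lie on the induced grid. The key sub-claim here is that if $f$ were \emph{not} constant on the grid cells for any finite grid, one could extract a sequence of jump points accumulating somewhere, and at the accumulation point the càdlàg limit condition (existence of the limit along each quadrant $Q_{\mathbf{a}}(\mathbf{u})$) would be violated — since a function with finitely many values that oscillates infinitely often near a point cannot have a one-sided limit there.

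Once $f$ is known to be constant on the cells of a finite grid $\mathcal{G}$, the second step is a purely algebraic one: any function that is constant on the half-open cells of such a grid and right-continuous (in the multivariate càdlàg sense) lies in $\mathrm{Span}(\mathcal{F}^d)$. This is the standard ``inclusion–exclusion'' expansion: writing $f = \sum_{\mathbf{v}} c_{\mathbf{v}} \1_{[\mathbf{v}, \mathbf{1}]}$ summed over grid vertices $\mathbf{v}$, with coefficients $c_{\mathbf{v}}$ determined recursively from the values of $f$ on the cells (equivalently, $c_{\mathbf{v}} = \Delta(f; \text{cell with lower corner } \mathbf{v})$ read off via the quasi-volume formula already introduced before the statement). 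The càdlàg/right-continuity condition at $\mathbf{a} = \mathbf{1}$ is exactly what guarantees $f(\mathbf{x}) = \mu_f([\mathbf{0},\mathbf{x}])$ for the induced finitely-supported signed measure $\mu_f$, so this also matches the measure correspondence of Proposition~\ref{prop:cadlag-measure}. It then follows that $\|f\|_v = \|\mu_f\|_{\mathrm{TV}} = \sum_{\mathbf{v}} |c_{\mathbf{v}}| < \infty$, so $f \in \mathcal{R}_M^d$ with $M = \|f\|_v$, contradicting $f \notin \mathcal{R}_M^d$ and completing the contrapositive.

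The main obstacle I expect is making the first step — ``finitely-valued $+$ càdlàg $\implies$ constant on a finite grid'' — fully rigorous in dimension $d > 1$. In one dimension it is elementary: a càdlàg step function has only finitely many jumps (else the jump points accumulate and the right limit fails at the accumulation point). In higher dimensions one has to handle the geometry of the discontinuity set: a priori the jump locations could lie along a curved hypersurface rather than an axis-aligned grid. The resolution is that the sections $f_{\S}$ and, more importantly, the ``one-sided trace'' structure forced by \emph{all} $2^d$ quadrant limits existing is very rigid for a finitely-valued function — along any axis-parallel line the restriction of $f$ is a univariate càdlàg step function, hence has finitely many jumps, and a compactness/covering argument across all such lines (using that there are only finitely many values, so the ``jump set'' in each coordinate direction is a closed set with no accumulation points once projected appropriately) pins the discontinuities to a finite union of coordinate hyperplanes. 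I would isolate this as the technical heart of the argument; the rest is bookkeeping with the quasi-volume expansion already developed in the paper.
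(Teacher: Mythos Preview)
Your proposal is correct and follows the same overall strategy as the paper: prove the contrapositive, use compactness of $[0,1]^d$ to pass from a local property to a finite grid, and then observe that a function constant on the cells of a finite axis-aligned grid lies in $\mathrm{Span}(\mathcal{F}^d)$.

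The one place your route diverges from the paper's is precisely the step you flag as the main obstacle. You propose to argue via restrictions to axis-parallel lines and then patch together a uniform grid from the resulting one-dimensional jump sets, worrying along the way that the discontinuity set could be curved. The paper sidesteps this entirely by isolating a cleaner local property: for every $\mathbf{x}\in[0,1]^d$ and every $\mathbf{a}\in\{0,1\}^d$ there is an $r>0$ such that $f$ is constant on $B_r(\mathbf{x})\cap Q_{\mathbf{a}}(\mathbf{x})$. This is an \emph{immediate} consequence of c\`adl\`ag plus finite range, since the limit along $Q_{\mathbf{a}}(\mathbf{x})$ exists and a sequence in a finite set converges only if it is eventually constant. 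Compactness then gives a finite subcover by such balls, and the centers of that subcover furnish the grid directly, with no need to reason about individual coordinate lines or to rule out curved discontinuity loci. Your route would work if made rigorous, but the local formulation dissolves your stated obstacle in one line; it is worth adopting.
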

\begin{proof}
  See Appendix~\ref{sec:cadl-funct-meas}.
\end{proof}

\begin{figure}
  \hspace*{\fill}
  \begin{subfigure}[h]{0.26\linewidth}
    \caption{}
    \label{fig:piece-wise1}
    \centering \includegraphics[width=\linewidth]{./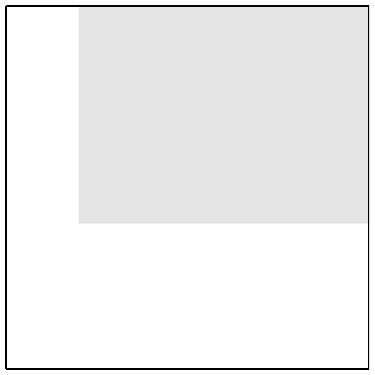}
  \end{subfigure}
  \hfill
  \begin{subfigure}[h]{0.26\linewidth}
    \caption{}
    \label{fig:piece-wise2}
    \centering \includegraphics[width=\linewidth]{./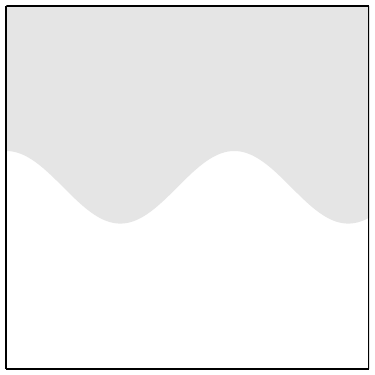}
  \end{subfigure}
  \hfill
  \begin{subfigure}[h]{0.26\linewidth}
    \caption{}
    \label{fig:piece-wise3}
    \centering \includegraphics[width=\linewidth]{./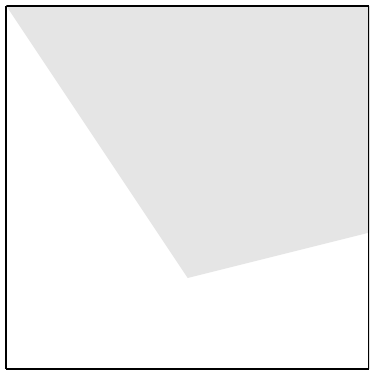}
  \end{subfigure}
  \hspace*{\fill}
  \caption[]{Illustration of functions \( f \colon [0,1]^2 \rightarrow \R \)
    that are 0 on the white area and 1 on the shaded area. The function in
    panel~(\subref{fig:piece-wise1}) is càdlàg. The functions in
    panels~(\subref{fig:piece-wise2}) and~(\subref{fig:piece-wise3}) are not
    càdlàg.}
  \label{fig:piece-wise}
\end{figure}

\section{Empirical risk minimization and the HAL estimator}
\label{sec:empir-risk-minim}

We now consider a general setup for loss-based estimation. We assume given an
i.i.d.\ dataset \( O_i \sim P \), \( i=1, \dots, n \), with data on the form
\begin{equation}
  \label{eq:27}
  O= (X, Y) \in \mathcal{O} = [0,1]^d \times \mathcal{Y},
  \quad \text{for} \quad
  \mathcal{Y} \subset \R.
\end{equation}
We use \( \empmeas \) to denote the empirical measure corresponding to a data
set \( \{O_i\}_{i=1}^n \). Let \( L \) be a loss function
\(L \colon \D_M \times \mathcal{O} \rightarrow \R\). We define the target
parameter
\begin{equation}
  \label{eq:true-min}
  \f = \argmin_{f \in\D_M} P{[L(f, \blank)]},
\end{equation}
which formally depends on \( M \) but we suppress that in the notation. A
natural estimator of \( \f \) is the substitution estimator, also known as the
\textit{the empirical risk minimizer},
\begin{equation}
  \label{eq:emp-full-min}
  \argmin_{f \in\D_M} \empmeas{[L(f, \blank)]}.
\end{equation}
The optimization problem in equation~(\ref{eq:emp-full-min}) reduces to a finite
but high-dimensional optimization problem for the squared error loss
\citep{fang2021multivariate}. We conjecture that this can be generalized to loss
functions for which we can write \citep[Assumption~2]{bibaut2019fast}
\begin{equation}
  \label{eq:point-loss}
  L(f, (\mathbf{x},y)) = \tilde{L}(f(\mathbf{x}), y),\quad \forall {f \in \D_M},
  \quad \text{for some function} \quad
  \tilde{L} \colon [0,1]^d \times \mathcal{Y} \longrightarrow \R_+.
\end{equation}
Note that equation~(\ref{eq:point-loss}) does not hold in general for the
negative log-likelihood as we demonstrate in Section~\ref{sec:censored-data}.

We now turn to define the HAL estimator \citep{van2017generally}. The HAL
estimator is motivated from the representation given by
Proposition~\ref{prop:cadlag-repr}, which shows that we can estimate
\( f \in \D_M \) by estimating the signed measures generated by its sections.
Let \( \delta_{X_{s,i}} \) be the Dirac measure at the \( \S \)-section of
\( X_i \) and define the estimator of the signed measure of the \( \S \)-section
of \( f \),
\begin{align*}
  \diff f_{\beta^{\S},n}
  & =
  \sum_{i=1}^{n}\beta_{i}^{\S}\delta_{X_{s,i}},
  \quad \text{with unknown parameter vector} \quad
  \beta^{\S} = (\beta_{1}^{\S}, \dots, \beta_{n}^{\S}) \in \R^n.
\end{align*}
This gives the following data-dependent model for estimation of \(f \in \D_M \),
\begin{equation}
  \label{eq:9}
  f_{\beta,n}(\mathbf{x})
  =
  \beta_0 +
  \sum_{\S \in \mathcal{S}}
  \sum_{i=1}^{n}\beta_{i}^{\S} \1\{X_{\S,i} \preceq \mathbf{x}_{\S}\},
  \quad \text{with} \quad
  \beta = \{\beta^{\S} : \S \in \mathcal{S} \} \cup \{\beta_0\},
\end{equation}
where \( \mathcal{S} = \{ s \subset [d] : s \not = \emptyset\} \). As
\( | \mathcal{S} | = \sum_{j=1}^{d} {\binom{d}{j}} = 2^d-1\) we have that
\(\beta \in \R^{m({d,n})}\) with \(m({d,n})= n(2^d-1)+1\). We refer to the
indicator functions in equation~(\ref{eq:9}) as basis function. Some examples of
basis functions are given in Figure~\ref{fig:hal-basis} for \( d=2 \). By
Proposition~\ref{prop:discrete-measure-closure}, any \(f_{\beta,n}\) is an
element of \(\D \) and we have
\begin{equation}
  \label{eq:8}
  \Vert f_{\beta,n} \Vert_{v}
  =
  \Vert \beta \Vert_{1}
  =
  |\beta_0| +
  \sum_{s \in \mathcal{S}}
  \sum_{i=1}^{n}|\beta_{i,s}|.
\end{equation}
Denote the space of all functions of this form by
\begin{equation*}
  \D_n :=\{f_{\beta,n} : \beta \in \R^{m(d,n)} \} \subset \D,
\end{equation*}
and denote similarly the subspace of these function with a sectional variation
norm bounded by a fixed constant \(M < \infty\) by
\begin{equation*}
  \D_{M,n} :=\{f_{\beta,n} : \beta\in \R^{m(d,n)}, \Vert \beta \Vert_{1}\leq M \} \subset \D_M.
\end{equation*}
A \textit{highly-adaptive lasso (HAL) estimator} is then defined as
\begin{equation}
  \label{eq:estimator}
  \hat{f}_n \in \argmin_{f \in\D_{M,n}} \empmeas{[L(f, \blank)]}.
\end{equation}
We refer to any minimizer as a HAL estimator.

\begin{figure}
  \captionsetup[subfigure]{labelformat=empty}
  \hspace*{\fill}
  \begin{subfigure}[h]{0.26\linewidth}
    \caption{\( \mathbf{x} \mapsto \1{\{X_{\{1\}} \preceq \mathbf{x}_{\{1\}}\}} \)}
    \label{fig:hal-basis-1}
    \centering \includegraphics[width=\linewidth]{./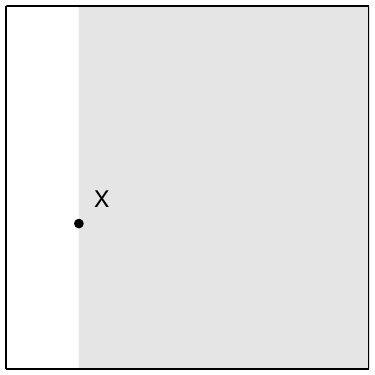}
  \end{subfigure}
  \hfill
  \begin{subfigure}[h]{0.26\linewidth}
    \caption{\( \mathbf{x} \mapsto \1{\{X_{\{2\}} \preceq \mathbf{x}_{\{2\}}\}} \)}
    \label{fig:hal-basis-2}
    \centering \includegraphics[width=\linewidth]{./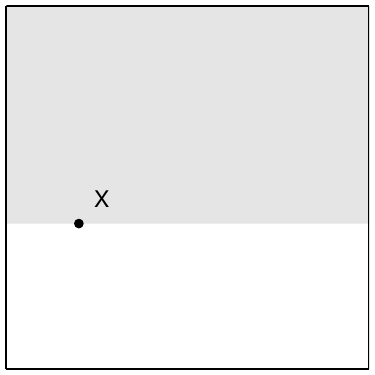}
  \end{subfigure}
  \hfill
  \begin{subfigure}[h]{0.26\linewidth}
    \caption{\( \mathbf{x} \mapsto \1{\{X_{\{1,2\}} \preceq \mathbf{x}_{\{1,2\}}\}} \)}
    \label{fig:hal-basis-12}
    \centering \includegraphics[width=\linewidth]{./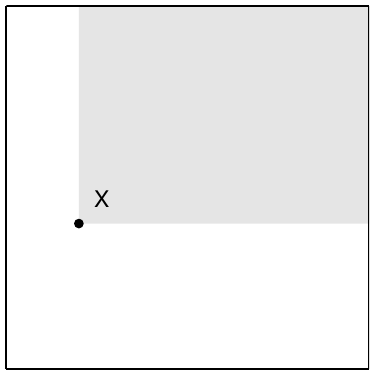}
  \end{subfigure}
  \hspace*{\fill}
  \caption[]{Examples of the basis functions that are used to construct the HAL
    estimator for \( d=2 \).}
  \label{fig:hal-basis}
\end{figure}

\section{Convergence rates using a projection}
\label{sec:conv-rates-suff}

In this section we show that a HAL estimator \(\hat{f}_n\) enjoys the same
convergence rate as has been shown to hold for the empirical risk minimizer,
when this is well-defined, under an additional smoothness assumption (see
Assumption~\ref{assum:f0} and the following discussion). In addition, we derive
asymptotic convergence rates for a HAL estimator in a setting where the
empirical risk minimizer is not well-defined. We denote by
\(N_{[\,]}(\epsilon,\mathcal{H}, \| \blank \|)\) the bracketing number for a
function space \(\mathcal{H}\) with respect to a norm \( \|\blank \| \). The
bracketing number is the minimum number of brackets with a norm smaller than
\(\epsilon\) needed to cover $\mathcal{H}$ \citep{van1996weak}. We use
\( \Vert \blank \Vert_{\infty} \) to denote the supremum norm and
\( \Vert \blank \Vert_{v} \) to denote the sectional variation norm, while for a
measure \( \mu \) we use \( \| \blank \|_{\mu} \) to denote the
\( \mathcal{L}^2(\mu) \)-norm. We use \( \leb \) to denote Lebesgue measure.
Recall that the data is of the form \( O= (X,Y) \) with \( X \in [0,1]^d \). For
all non-empty subsets \(s \subset \{1, \dots, d\}\) we let \(P_{s}\) denote the
marginal distribution of \(X_{s}\). We let \( \mu_{\f_{\S}} \) denote the
measures generated by the sections \( \f_{\S} \). Note that the measures
\( \mu_{\f_{\S}} \) and \(P_{s}\) operate on the same measure space
\( [0,1]^{|s|} \). We assume that
\( \1_{(\mathbf{0}_s, \mathbf{1}_s]} \cdot \mu_{\f_{\S}} \ll \1_{(\mathbf{0}_s,
  \mathbf{1}_s]} \cdot P_{s} \) and write the Radon-Nikodym derivatives as
\begin{equation*}
  \1_{(\mathbf{0}_s,
    \mathbf{1}_s]} \frac{ \diff \f_{\S}}{ \diff P_{s}}
  =  \frac{ \diff  \{\1_{(\mathbf{0}_s, \mathbf{1}_s]} \cdot\mu_{\f_{\S}}\}}{
    \diff  \{\1_{(\mathbf{0}_s, \mathbf{1}_s]} \cdot P_{s}\}},
  \quad \text{for} \quad
   \S \in \mathcal{S}.
\end{equation*}

\begin{assumption}[Smoothness of the loss function]
  \label{assum:loss-assum}
  For a loss function \( L \) define the function space
  \( \mathcal{L}_{M} = \left\{ L(f, \blank) : f \in \D_M \right\}\). There exist
  constants \(C < \infty\), $\eta > 0$, and $\kappa\in\N$ such that the
  following conditions hold.
  \begin{enumerate}[label=(\roman*)]
  \item \label{item:1} \(\Vert L(f, \blank) \Vert_{\infty} \leq C\) for all
    \(f\in\D_M\).
  \item \label{item:2}
    \(C^{-1}\Vert f - \f \Vert_{\leb}^2 \leq P[L(f, \blank) - L(\f, \blank)]
    \leq C \Vert f - \f \Vert_{\leb}^2\) for all \(f\in\D_M\).
  \item \label{item:5}
    \(N_{[\,]}(\epsilon, \mathcal{L}_M, \| \blank \|_{P}) \leq C
    N_{[\,]}(\epsilon/C, \D_M, \| \blank \|_{\leb} )^{\kappa}\) for all
    $\epsilon\in(0,\eta)$.
  \end{enumerate}
\end{assumption}

Assumption~\ref{assum:loss-assum}~\ref{item:2} is a standard assumption
\citep[e.g.,][]{van1996weak}. Some general conditions on the loss functions can
be given to ensure that Assumption~\ref{assum:loss-assum}~\ref{item:5} holds,
see for instance Lemma~4 in Appendix~B in \citep{bibaut2019fast}.

\begin{assumption}[Data-generating distribution]
  \label{assum:f0}
  There is a constant \( C < \infty \) such that the following conditions
  hold.
  \begin{enumerate}[label=(\roman*)]
  \item \label{item:11} The target parameter \( \f \) is an inner point of
    \( \D_M \) with respect to the sectional variation norm, i.e.,
    \( \Vert \f \Vert_{v} < M\).
  \item \label{item:3}
    \( \1_{(\mathbf{0}_s, \mathbf{1}_s]} \cdot \mu_{\f_{\S}} \ll
    \1_{(\mathbf{0}_s, \mathbf{1}_s]} \cdot P_{s} \) and
    \( \Vert \1_{(\mathbf{0}_s, \mathbf{1}_s]} \diff \f_{\S}/ \diff P_s
    \Vert_{\infty} \leq C\) for all \(s \in \mathcal{S}\).
  \end{enumerate}
\end{assumption}
Assumption~\ref{assum:f0}~\ref{item:3} is substantial, as it imposes an
additional smoothness condition on \( \f \). For instance, if \( P \) is
dominated by Lebesgue measure, Assumption~\ref{assum:f0}~\ref{item:3} implies
that the measures generated by the sections of \( \f \) must also be dominated
by Lebesgue measure, hence \( \f \) must be continuous. We discuss the necessity
of this assumption further in Section~\ref{sec:regression-function}.

Our main result (Theorem~\ref{theorem:main-result}) relies on
Lemma~\ref{lemma:projection} which is based on a construction given in
Appendix~B of \citep{van2023higher}. A proof of the lemma is given at the end of
this section.
\begin{lemma}
  \label{lemma:projection}
  For any \( f \in \mathcal{L}^2(\leb) \) and \( n \in \N \) there exists a
  (random) function \( \hat{\pi}_n(f) \in \D_{M,n} \) such that
  \begin{equation*}
    \hat{\pi}_n(f) = \argmin_{h \in \D_{M,n}}\Vert h - f \Vert_{\leb}.
  \end{equation*}
  For any \( \f \) fulfilling Assumption~\ref{assum:f0}, it holds that
  \begin{equation*}
    \Vert \hat{\pi}_n(\f) - \f \Vert_{\leb} = \bigO_P{(n^{-1/2})}.
  \end{equation*}
\end{lemma}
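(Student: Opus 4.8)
The plan is to prove the two assertions in turn. For the existence of $\hat{\pi}_n(f)$ I would argue by compactness: $\D_{M,n}$ is the image of the compact set $K = \{\beta \in \R^{m(d,n)} : \Vert\beta\Vert_1 \le M\}$ under the linear map $\beta \mapsto f_{\beta,n}$, this map is continuous from $(K, \Vert\blank\Vert_1)$ into $(\D_M, \Vert\blank\Vert_\infty)$ because $\Vert f_{\beta,n} - f_{\beta',n}\Vert_\infty \le \Vert\beta-\beta'\Vert_1$, and $h \mapsto \Vert h - f\Vert_{\leb}$ is $1$-Lipschitz on $(\D_M, \Vert\blank\Vert_\infty)$ since $\leb$ has total mass one on $[0,1]^d$. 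Hence $\beta \mapsto \Vert f_{\beta,n} - f\Vert_{\leb}$ is continuous on the compact set $K$ and attains its minimum; any minimizing $\beta$ gives a function $\hat{\pi}_n(f)$ with the stated property (being the $\mathcal{L}^2(\leb)$-projection onto the closed convex set $\D_{M,n}$ it is in fact unique, but uniqueness is not needed).

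For the rate it suffices to exhibit a data-dependent $\tilde{f}_n$ with $P(\tilde{f}_n \in \D_{M,n}) \to 1$ and $\Vert \tilde{f}_n - \f \Vert_{\leb} = \bigO_P(n^{-1/2})$: since $0 \in \D_{M,n}$ and $\Vert\f\Vert_\infty \le \Vert\f\Vert_v \le M$ we always have $\Vert \hat{\pi}_n(\f) - \f \Vert_{\leb} \le \Vert \f \Vert_{\leb} \le M$, whereas on the event $\{\tilde{f}_n \in \D_{M,n}\}$ optimality of $\hat{\pi}_n(\f)$ gives $\Vert \hat{\pi}_n(\f) - \f \Vert_{\leb} \le \Vert \tilde{f}_n - \f \Vert_{\leb}$; splitting on this event and using $P(\tilde{f}_n \in \D_{M,n}) \to 1$ then yields the claim. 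To construct $\tilde{f}_n$, following the idea in Appendix~B of \citep{van2023higher}, I would use Proposition~\ref{prop:cadlag-repr} together with Assumption~\ref{assum:f0}~\ref{item:3}: with $g_s := \1_{(\mathbf{0}_{\S}, \mathbf{1}_{\S}]}\,\diff \f_{\S}/\diff P_s$ (so $\Vert g_s\Vert_\infty \le C$) and since $\1_{(\mathbf{0}_{\S}, \mathbf{x}_{\S}]} \le \1_{(\mathbf{0}_{\S}, \mathbf{1}_{\S}]}$, one has $\f(\mathbf{x}) = \f(\mathbf{0}) + \sum_{\S \in \mathcal{S}} \int \1_{(\mathbf{0}_{\S}, \mathbf{x}_{\S}]}(\mathbf{u})\, g_s(\mathbf{u})\,\diff P_s(\mathbf{u})$. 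Letting $P_{n,s}$ be the empirical measure of $X_{\S,1}, \dots, X_{\S,n}$, set $\tilde{f}_n(\mathbf{x}) = \f(\mathbf{0}) + \sum_{\S \in \mathcal{S}} \int \1_{(\mathbf{0}_{\S}, \mathbf{x}_{\S}]}(\mathbf{u})\, g_s(\mathbf{u})\,\diff P_{n,s}(\mathbf{u})$. This has the form~(\ref{eq:9}) with $\beta_0 = \f(\mathbf{0})$ and $\beta_i^{\S} = n^{-1} g_s(X_{\S,i})\1\{\mathbf{0}_{\S} \prec X_{\S,i}\}$, so by~(\ref{eq:8}) $\Vert \tilde{f}_n \Vert_v = |\f(\mathbf{0})| + \sum_{\S \in \mathcal{S}} n^{-1}\sum_{i=1}^n |g_s(X_{\S,i})|\,\1\{\mathbf{0}_{\S} \prec X_{\S,i}\}$, which by the strong law of large numbers tends almost surely to $|\f(\mathbf{0})| + \sum_{\S \in \mathcal{S}} \int \1_{(\mathbf{0}_{\S}, \mathbf{1}_{\S}]} |g_s|\,\diff P_s = \Vert \f \Vert_v$. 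By Assumption~\ref{assum:f0}~\ref{item:11} this limit is strictly less than $M$, so $P(\Vert \tilde{f}_n \Vert_v \le M) \to 1$, that is, $P(\tilde{f}_n \in \D_{M,n}) \to 1$.

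It then remains to bound $\Vert \tilde{f}_n - \f \Vert_{\leb}$. Writing $\phi^{\S}_{\mathbf{v}} := \1_{(\mathbf{0}_{\S}, \mathbf{v}]}\, g_s$ for $\mathbf{v} \in [0,1]^{|\S|}$, we have $\tilde{f}_n(\mathbf{x}) - \f(\mathbf{x}) = \sum_{\S \in \mathcal{S}} (P_{n,s} - P_s)[\phi^{\S}_{\mathbf{x}_{\S}}]$ and $|\phi^{\S}_{\mathbf{v}}| \le C$. Using $\big(\sum_{\S \in \mathcal{S}} a_s\big)^2 \le (2^d - 1)\sum_{\S \in \mathcal{S}} a_s^2$ and the fact that $\phi^{\S}_{\mathbf{x}_{\S}}$ depends on $\mathbf{x}$ only through $\mathbf{x}_{\S}$ (so the $d - |\S|$ remaining coordinates integrate out to one), $\Vert \tilde{f}_n - \f \Vert_{\leb}^2 \le (2^d - 1)\sum_{\S \in \mathcal{S}} \int_{[0,1]^{|\S|}} \big((P_{n,s} - P_s)[\phi^{\S}_{\mathbf{v}}]\big)^2 \diff \leb(\mathbf{v})$. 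Taking expectations, Tonelli's theorem and the elementary bound $\E\big[\big((P_{n,s} - P_s)[\phi^{\S}_{\mathbf{v}}]\big)^2\big] = n^{-1}\V\big(\phi^{\S}_{\mathbf{v}}(X_{\S})\big) \le n^{-1}C^2$ give $\E \Vert \tilde{f}_n - \f \Vert_{\leb}^2 \le (2^d-1)^2 C^2/n$, and Markov's inequality then yields $\Vert \tilde{f}_n - \f \Vert_{\leb} = \bigO_P(n^{-1/2})$, completing the proof.

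I expect the one genuinely delicate point to be checking that $\tilde{f}_n$ lies in $\D_{M,n}$ and not merely in $\D_n$; this is exactly where the strict-interior condition in Assumption~\ref{assum:f0}~\ref{item:11} is needed, because the empirical sectional variation norm of the competitor overshoots $\Vert \f \Vert_v$ by a fluctuation of order $n^{-1/2}$ that must nonetheless be kept below $M$ with probability tending to one. The existence statement (compactness) and the second-moment control of the $\leb$-error are otherwise routine, the latter hinging only on the boundedness of the Radon--Nikodym derivatives supplied by Assumption~\ref{assum:f0}~\ref{item:3}.
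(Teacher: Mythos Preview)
Your proof is correct and follows the same overall architecture as the paper: existence via compactness of $\{\beta:\Vert\beta\Vert_1\le M\}$ and continuity of the objective; construction of the same competitor $\tilde f_n$ by replacing $P_s$ with the sectional empirical measure in the representation of Proposition~\ref{prop:cadlag-repr}; the LLN argument that $\Vert\tilde f_n\Vert_v\to\Vert\f\Vert_v<M$ so that $P(\tilde f_n\in\D_{M,n})\to1$; and finally splitting on this event to transfer the rate from $\tilde f_n$ to $\hat\pi_n(\f)$.

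The one substantive difference is in how you establish $\Vert\tilde f_n-\f\Vert_{\leb}=\bigO_P(n^{-1/2})$. The paper invokes empirical process theory: it notes that each class $\{\1_{(\mathbf{0}_s,\mathbf{x}_s]}\,g_s:\mathbf{x}_s\in(0,1]^{|s|}\}$ is Donsker (as a bounded multiple of indicator halfspaces), which yields the stronger conclusion $\Vert\tilde f_n-\f\Vert_\infty=\bigO_P(n^{-1/2})$. Your second-moment computation via Tonelli and $\V(\phi^s_{\mathbf v}(X_s))\le C^2$ is more elementary, avoids any entropy or Donsker machinery, and gives exactly the $\mathcal L^2(\leb)$ bound that the lemma actually requires. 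Both are valid; yours is shorter and self-contained, while the paper's gives a uniform bound that is not needed here but may be of independent interest.
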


\begin{theorem}
  \label{theorem:main-result}
  If Assumptions~\ref{assum:loss-assum} and~\ref{assum:f0} hold, and
  \(\hat{f}_n\) is a HAL estimator as defined in equation~(\ref{eq:estimator}),
  then
  \begin{equation*}
    \Vert \hat{f}_n - \f \Vert_{\leb} = \bigO_P(n^{-1/3}\log(n)^{2(d-1)/3}).
  \end{equation*}
\end{theorem}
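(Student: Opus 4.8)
The plan is to combine the approximation bound from Lemma~\ref{lemma:projection} with a standard empirical-process / oracle-inequality argument for loss-based M-estimators, using the bracketing-entropy control provided by Assumption~\ref{assum:loss-assum}~\ref{item:5} together with the known entropy bound for $\D_M$.

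First I would set up the basic decomposition. Let $\hat\pi_n := \hat\pi_n(\f)$ be the projection from Lemma~\ref{lemma:projection}, which lies in $\D_{M,n}$ and satisfies $\Vert \hat\pi_n - \f\Vert_{\leb} = \bigO_P(n^{-1/2})$. Since $\hat f_n$ minimizes $\empmeas[L(f,\blank)]$ over $\D_{M,n}$ and $\hat\pi_n \in \D_{M,n}$, we have $\empmeas[L(\hat f_n,\blank)] \le \empmeas[L(\hat\pi_n,\blank)]$. Rearranging and adding and subtracting $P$-expectations gives the workhorse inequality
\begin{equation*}
  P[L(\hat f_n,\blank) - L(\f,\blank)] \le (\empmeas - P)[L(\f,\blank) - L(\hat f_n,\blank)] + P[L(\hat\pi_n,\blank) - L(\f,\blank)] + (\empmeas - P)[L(\hat\pi_n,\blank) - L(\f,\blank)].
\end{equation*}
By Assumption~\ref{assum:loss-assum}~\ref{item:2}, the left side is bounded below by $C^{-1}\Vert \hat f_n - \f\Vert_{\leb}^2$, and the second term on the right is bounded above by $C\Vert \hat\pi_n - \f\Vert_{\leb}^2 = \bigO_P(n^{-1})$. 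So the whole problem reduces to controlling the two empirical-process terms. For the third (the $\hat\pi_n$ term), I would note $\Vert \hat\pi_n - \f\Vert_{\leb} \to 0$ at rate $n^{-1/2}$, and use a localized maximal inequality over the class $\{L(g,\blank) - L(\f,\blank) : g \in \D_M\}$ restricted to a shrinking $\mathcal{L}^2(\leb)$-ball, so that this term is of smaller order than the leading term and can be absorbed.

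The crux is the first term, the empirical process indexed by $\hat f_n$ itself. Here I would invoke a standard peeling/localization argument (as in van der Vaart--Wellner, Theorem~3.4.1, or the rate theorem for M-estimators): the modulus of continuity of the empirical process over the class $\mathcal{L}_M = \{L(f,\blank) : f \in \D_M\}$ restricted to functions with $\Vert f - \f\Vert_{\leb} \le \delta$ governs the rate. Boundedness of the loss class, Assumption~\ref{assum:loss-assum}~\ref{item:1}, plays the role of a uniform envelope so that $\mathcal{L}^2(P)$-brackets behave well. The key input is the bracketing integral: by Assumption~\ref{assum:loss-assum}~\ref{item:5}, $\log N_{[\,]}(\epsilon,\mathcal{L}_M,\Vert\blank\Vert_P) \lesssim \log N_{[\,]}(\epsilon/C, \D_M, \Vert\blank\Vert_{\leb})$ up to the constant factor $\kappa$ (which does not affect the rate), and the known entropy bound for multivariate càdlàg functions of bounded sectional variation (from \citealp{van2017generally}, or equivalently \citealp{fang2021multivariate}; via Proposition~\ref{prop:cadlag-measure}) gives $\log N_{[\,]}(\epsilon, \D_M, \Vert\blank\Vert_{\leb}) \lesssim \epsilon^{-1}\left(\log(1/\epsilon)\right)^{2(d-1)}$. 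Feeding this into the bracketing entropy integral $\int_0^\delta \sqrt{\log N_{[\,]}(\epsilon,\cdot)}\,\diff\epsilon \lesssim \delta^{1/2}\left(\log(1/\delta)\right)^{d-1}$ and balancing $\phi_n(\delta)/\sqrt{n}$ against $\delta^2$ (where $\phi_n(\delta)$ is the resulting modulus) yields the rate $\delta_n \asymp n^{-1/3}\left(\log n\right)^{2(d-1)/3}$, exactly as claimed. Finally, Assumption~\ref{assum:f0}~\ref{item:11} ($\Vert\f\Vert_v < M$, so $\f$ is an interior point) is what makes the localization legitimate and guarantees the bias term from projecting onto $\D_{M,n}$ does not degrade the rate.

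The main obstacle I anticipate is making the localized empirical-process argument fully rigorous in the $\mathcal{L}^2(\leb)$ metric while the empirical process is naturally controlled in $\mathcal{L}^2(P)$: one needs Assumption~\ref{assum:loss-assum}~\ref{item:2} (the quadratic margin in the $\leb$-norm) and boundedness to transfer between the two, and one must be careful that $\hat f_n \in \D_{M,n} \subset \D_M$ so the entropy bound for the full class $\D_M$ applies even though $\hat f_n$ ranges over the data-dependent sieve. The bracketing-number bound for $\mathcal{L}_M$ in terms of that for $\D_M$ is precisely what Assumption~\ref{assum:loss-assum}~\ref{item:5} supplies, so no new entropy computation is needed beyond citing the càdlàg entropy bound; the remaining work is bookkeeping in the peeling argument. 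I would present the two empirical-process bounds as a single lemma (uniform modulus of continuity over $\mathcal{L}_M$) and then assemble the pieces in a short paragraph.
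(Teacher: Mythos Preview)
Your plan uses the same ingredients as the paper---the projection $\hat\pi_n(\f)$ from Lemma~\ref{lemma:projection}, the bracketing entropy bound for $\D_M$ (the paper cites \citealp{bibaut2019fast} for this), and the rate theorem of \citet[Theorem~3.4.1]{van1996weak}---so at the level of strategy there is no disagreement. The paper's proof differs in one organizational respect that turns out to matter: it first applies the triangle inequality $\|\hat f_n-\f\|_{\leb}\le\|\hat f_n-\hat\pi_n(\f)\|_{\leb}+\|\hat\pi_n(\f)-\f\|_{\leb}$, and then applies Theorem~3.4.1 with the \emph{random} point $\hat\pi_n(\f)$ (not $\f$) as the centering target, so that the near-minimizer condition $\empmeas[L(\hat f_n,\blank)]\le\empmeas[L(\hat\pi_n(\f),\blank)]$ holds exactly by construction.

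Your oracle-inequality decomposition instead centers at $\f$, and this creates a subtle problem with the term $(\empmeas-P)[L(\hat\pi_n,\blank)-L(\f,\blank)]$. You propose to bound it by the localized maximal inequality at radius $\|\hat\pi_n-\f\|_{\leb}=\bigO_P(n^{-1/2})$, asserting it is ``of smaller order than the leading term and can be absorbed.'' But the modulus bound you will derive (cf.\ Lemma~\ref{lemma:bound-emp-brack}) is $E^*[\Gamma_n(\delta)]\lesssim\delta^{1/2}|\log\delta|^{d-1}+|\log\delta|^{2(d-1)}/(\delta\sqrt n)$, whose second term \emph{blows up} as $\delta\to 0$; plugging in $\delta\asymp n^{-1/2}$ yields $n^{-1/2}\Gamma_n(n^{-1/2})\lesssim n^{-1/2}(\log n)^{2(d-1)}$, which is strictly \emph{larger} than the target $r_n^{-2}=n^{-2/3}(\log n)^{4(d-1)/3}$. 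Hence with only the stated assumptions this term does not absorb, and the argument as written would give the suboptimal rate $n^{-1/4}(\log n)^{d-1}$. The cleanest fix is exactly what the paper does: center Theorem~3.4.1 at $\hat\pi_n(\f)$ so that no empirical slack is needed, verify the curvature condition (C1) around $\hat\pi_n(\f)$ from Assumption~\ref{assum:loss-assum}~\ref{item:2} and $\|\hat\pi_n(\f)-\f\|_{\leb}\to 0$, and then add back the $\bigO_P(n^{-1/2})$ projection error at the end.
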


\begin{proof}
  We can write
  \begin{equation*}
    \Vert \hat{f}_n - \f \Vert_{\leb} \leq
    \Vert \hat{f}_n - \hat{\pi}_n(\f) \Vert_{\leb} +
    \Vert \hat{\pi}_n(\f) - \f \Vert_{\leb},
  \end{equation*}
  where \( \hat{\pi}_n \) is the projection defined in
  Lemma~\ref{lemma:projection}. As Assumption~\ref{assum:f0} is assumed to hold,
  the second term on the right hand side is of order \( \bigO_P{(n^{-1/2})} \).
  The first term of the right hand side can be analyzed using classical results
  from empirical process theory. A detailed proof showing that
  \( \Vert \hat{f}_n - \hat{\pi}_n(\f) \Vert_{\leb} =
  \bigO_P(n^{-1/3}\log(n)^{2(d-1)/3}) \) is given in
  Appendix~\ref{sec:results-from-empir}.
\end{proof}

\begin{proof}[Proof of Lemma~\ref{lemma:projection}]
  By definition of \( \D_{M,n} \), any element \( h \in \D_{M,n} \) can be
  written as \( h = \sum_{k = 1}^{m(d,n)} \beta_k h_k \), for some coefficients
  \( \beta = (\beta_1, \dots, \beta_{m(d,n)}) \) and indicator functions
  \( h_k \). Minimizing \( h \mapsto \Vert h - f \Vert_{\leb} \) over
  \( \D_{M,n} \) is thus equivalent to minimizing
  \begin{equation*}
    \mathcal{G}(\beta) = \int_{[0,1]^d}
    \left\{
      \left(
        \sum_{k=1}^{m(d,n)} \beta_k h_k
      \right)^2
      - 2\sum_{k=1}^{m(d,n)} \beta_k h_k \f
    \right\}
    \diff  \leb
  \end{equation*}
  over the set
  \(\mathcal{B}_M = \{\beta \in \R^{m(d,n)} : \Vert \beta \Vert_1 \leq M\}\).
  Writing
  \begin{equation*}
    \mathcal{G}(\beta)
    = \sum_{k=1}^{m(d,n)}\sum_{l=1}^{m(d,n)}\beta_k \beta_l \int_{[0,1]^d} h_k
    h_l  \diff \leb
    -
    2\sum_{k=1}^{m(d,n)} \beta_k
    \int_{[0,1]^d} h_k \f
    \diff  \leb,
  \end{equation*}
  shows that $\mathcal{G}$ is continuous. As \( \mathcal{B}_M \) is compact, it
  follows that a minimum is attained.

  To show the second statement of the lemma, we follow the proof of Lemma~23 in
  \citep{van2023higher} and define the random function
  \begin{equation}
    \label{eq:f-tilde-sieve}
    \f_{n}(\mathbf{x}) =\f(\mathbf{0})+
    \sum_{\S \in \mathcal{S}}\int_{(\mathbf{0}_s, \mathbf{x}_s]}\frac{\diff \f_{\S}}{\diff P_s}  \diff
    \mathbb{P}_{s,n},
  \end{equation}
  where \(\mathbb{P}_{s,n}\) is the empirical measure of the \(s\)-section of
  the data \(\{X_i\}_{i=1}^n\), i.e., the empirical measure obtained from
  \(\{X_{s, i}\}_{i=1}^n\). This function is well-defined by
  Assumption~\ref{assum:f0}~\ref{item:3}. We next show that
  \begin{equation}
    \label{eq:tilde-key-prop}
    \Vert  \f_{n} - \f \Vert_{\leb} = \bigO_P{(n^{-1/2})},
  \end{equation}
  and
  \begin{equation}    
    \label{eq:tilde-key-prop2}
        P{\left(
        \f_{n} \in \D_{M,n}
      \right)} \longrightarrow 1.
  \end{equation}
  To see this, we use the representation given by
  Proposition~\ref{prop:cadlag-repr} and Assumption~\ref{assum:f0}~\ref{item:3}
  to write
  \begin{equation*}
    \f(\mathbf{x}) =
    \f(\mathbf{0})+
    \sum_{\S \in \mathcal{S}}\int_{(\mathbf{0}_s, \mathbf{x}_s]}\diff \f_{\S}
    = \f(\mathbf{0})+
    \sum_{\S \in \mathcal{S}}\int_{(\mathbf{0}_s, \mathbf{x}_s]}\frac{\diff \f_{\S}}{\diff P_s} \diff P_s,
  \end{equation*}
  from which we obtain
  \begin{equation*}
    \f_{n}(\mathbf{x}) - \f(\mathbf{x})
    = \sum_{\S \in \mathcal{S}}\int_{(\mathbf{0}_s, \mathbf{x}_s]}\frac{\diff \f_{\S}}{\diff P_s} \diff
    [\mathbb{P}_{s,n}-P_s]
    =
    n^{-1/2}\sum_{s \in \mathcal{S}}    \mathbb{G}_{s,n}{
      \left[
        \1_{(\mathbf{0}_s, \mathbf{x}_s]}\frac{\diff \f_{\S}}{\diff P_s}
      \right]},
  \end{equation*}
  where \(\mathbb{G}_{s,n}\) denotes the empirical process of the \(s\)-section
  of the data. As
  \(\{\1_{(\mathbf{0}_s, \mathbf{x}_s]} : \mathbf{x}_s \in (0, 1]^{|s|} \}\) is
  a Donsker class \citep{van1996weak}, it follows from the preservation
  properties of Donsker classes and the assumption that
  \( \1_{(\mathbf{0}_s, \mathbf{1}_s]}\diff \f_{\S} / \diff P_s \) is uniformly
  bounded, that also
  \begin{equation*}
    \mathcal{F}_s^* =
    \left\{
      \1_{(\mathbf{0}_s, \mathbf{x}_s]}\frac{\diff \f_{\S}}{\diff P_s} : \mathbf{x}_s \in (0, 1]^{|s|}
    \right\}
  \end{equation*}
  is a Donsker class. As this holds for any section \( \S \in \mathcal{S} \), we
  have
  \begin{equation*}
    \Vert \f_n - \f \Vert_{\infty} \leq
    n^{-1/2} \sum_{s \in \mathcal{S}}
    \sup_{f \in \mathcal{F}_s^*}
    |\mathbb{G}_{s,n}{[f]}|
    =     n^{-1/2} \sum_{s \in \mathcal{S}} \bigO_P{(1)}
    = \bigO_P(n^{-1/2}),
  \end{equation*}
  which in particular shows equation~(\ref{eq:tilde-key-prop}). To show
  equation~(\ref{eq:tilde-key-prop2}), note that
  \begin{equation}
    \label{eq:37}
    \f_n(\mathbf{x}) = \f(\mathbf{0}) + 
    \sum_{\S \in \mathcal{S}}
    \frac{1}{n} \sum_{i=1}^{n} \1_{(\mathbf{0}_s,
      \mathbf{1}_s]}(X_{s,i})\frac{\diff \f_{\S}}{\diff P_s}(X_{s,i})
    \1{\{X_{s,i} \preceq \mathbf{x}_{s}\}}.
  \end{equation}
  Equation~(\ref{eq:37}) shows that \(\f_n \in \D_n\), and by
  equation~(\ref{eq:8})
  \begin{align*}
    \Vert \f_{n} \Vert_{v}
    & = \f(\mathbf{0}) + \sum_{s \in \mathcal{S}} \frac{1}{n}\sum_{i=1}^{n}
      \1_{(\mathbf{0}_s, \mathbf{1}_s]}(X_{s,i}) \left\vert \frac{\diff
      \f_{\S}}{\diff P_s} \right\vert (X_{s,i})
    \\
    & = \f(\mathbf{0}) + \sum_{s \in \mathcal{S}}
      \mathbb{P}_{s,n}
      {\left[
      \1_{(\mathbf{0}_s, \mathbf{1}_s]}
      \frac{\vert \diff \f_{\S} \vert}{\diff P_s} 
      \right]},
  \end{align*}
  where we use \( |\diff \f_{\S}| / \diff P_s \) to denote the Radon-Nikodym
  derivative of \( |\mu_{\f_{\S}}| \) with respect to \( P_s \) on
  \( (\mathbf{0}_s, \mathbf{1}_s] \). The last equality follows from the
  properties of the Jordan-Hahn decomposition and the fact that \(P_s\) is a
  positive measure. By Assumption~\ref{assum:f0}~\ref{item:3} and the law of
  large numbers this implies that
  \(\Vert \f_{n} \Vert_{v} \arrow{P} \Vert \f \Vert_{v}\). As
  \(\Vert \f \Vert_{v}< M\) by Assumption~\ref{assum:f0}~\ref{item:11} it
  follows that \(P(\Vert \f_{n} \Vert_{v} < M) \rightarrow 1\), which shows
  equation~(\ref{eq:tilde-key-prop2}).

  Define the indicator variable
  \( \eta_n = \1{ \left\{ \f_n \in \D_{M,n} \right\}} \). Note that
  equation~(\ref{eq:tilde-key-prop2}) implies that \(P(\eta_n=1) \rightarrow 1\)
  and thus \citep[e.g.,][Lemma 2]{schuler2023selectively} yields
  \begin{equation}
    \label{eq:10}
    (1-\eta_n) = \smallO_P(a_n^{-1}) \quad \text{for any sequence }
    a_n \longrightarrow \infty.
  \end{equation}
  When \(\eta_n =1\) we have \(\f_n \in \D_{M,n}\) an thus by definition of
  \( \pi_n(\f) \) we have
  \( \eta_n \Vert \pi_n(\f) - \f \Vert_{\leb} \leq \eta_n \Vert \f_n - \f
  \Vert_{\leb} \). From this it follows that
  \begin{align*}
    \Vert \pi_n(\f) - \f \Vert_{\leb}
    & \leq \eta_n \Vert \f_n - \f \Vert_{\leb}
      + (1-\eta_n) \Vert \pi_n(\f) - \f \Vert_{\leb}
    \\
    & \leq  \Vert \f_n - \f \Vert_{\leb} + (1-\eta_n)2M
      = \bigO_P{(n^{-1/2})},
  \end{align*}
  where the last equality follows from equations~(\ref{eq:tilde-key-prop})
  and~(\ref{eq:10}).
\end{proof}

\section{Right-censored data}
\label{sec:censored-data}

Let \( T \in \R_+ \) be a time to event variable and \( W \in [0,1]^{d-1} \) a
covariate vector. In this section we discuss estimation of the hazard function
\( \alpha(t,\mathbf{w})\), for \( t \in [0,1] \) and
\( \mathbf{w} \in [0,1]^{d-1} \), where
\begin{equation*}
  \alpha(t, \mathbf{w}) = \lim_{\epsilon \searrow 0} \frac{P(T \in [t, t+ \epsilon] \mid T \geq t, W=\mathbf{w})}{\epsilon}.
\end{equation*}
We parameterize the log-hazard function as a multivariate càdlàg function with
bounded sectional variation norm,
\begin{equation}
  \label{eq:31}
  \log \alpha(t, \mathbf{w}) = f(t,\mathbf{w}),
  \quad \text{with} \quad
  f \in \D_{M}.
\end{equation}
Let \( C \in \R_+ \) be a right-censoring time. We assume conditional
independent censoring, i.e., \( C \independent T \mid W \). As we are only
interested in the conditional hazard function for \( t \in [0,1] \), we can
focus on the truncated event time \( T \wedge 1 \). We observe
\(O=(W,\tilde{T}, \Delta)\), where \( \tilde{T} = T \wedge 1\wedge C \) and
\( \Delta = \1{\{T \leq (C \wedge 1) \}} \). The right-censored data fits into
the setup described in Section~\ref{sec:empir-risk-minim} by setting
\( X = (W, \tilde{T}) \), \( Y=\Delta \), and \( \mathcal{Y} = \{0,1\} \). We
denote by \( n' \) the number of unique time points, and by
\(\tilde{T}_{(1)} < \tilde{T}_{(2)} < \tilde{T}_{(n')}\) the ordered sequence of
observed unique time points. We define \(\tilde{T}_{(0)} = 0\).

As loss function we use the negative log of the partial likelihood for \( f \)
\citep{cox1975partial,andersen2012statistical},
\begin{equation}
  \label{eq:nll-loss-survival}
  L^{\mathrm{pl}}(f, O) = \int_0^{\tilde{T}}e^{f(u, W)}  \diff u - \Delta f(\tilde{T}, W).
\end{equation}

The remainder of this section is organized as follows. We start by showing that
the empirical risk minimizer according to the partial likelihood loss is either
not defined or not consistent. We then show that the HAL estimator is
well-defined and derive its asymptotic convergence rate.

Proposition~\ref{prop:emp-risk-min-surv-dont-exist} gives a formal statement of
the problem described in Figure~\ref{fig:emp-risk-dens-problem} in
Section~\ref{sec:introduction}. To demonstrate the problem it is sufficient to
consider the univariate case without covariates.

\begin{proposition}  
  \label{prop:emp-risk-min-surv-dont-exist}
  Let \( f^{\circ} \in \DD{1}_M \) be given. If there exists a
  \( j \in \{1, \dots, n'-1\} \) such that
  \( f^{\circ}(\tilde{T}_{(j)}) > f^{\circ}(\tilde{T}_{(j+1)}) \), then
  \begin{equation*}
    f^{\circ} \not \in \argmin_{f \in \DD{1}_{M}}\empmeas{[L^{\mathrm{pl}}(f, \blank)]}.
  \end{equation*}
\end{proposition}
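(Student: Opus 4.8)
The plan is to produce, for any $f^{\circ} \in \DD{1}_M$ with $a := f^{\circ}(\tilde{T}_{(j)}) > f^{\circ}(\tilde{T}_{(j+1)}) =: b$, an explicit competitor $g \in \DD{1}_M$ with $\empmeas[L^{\mathrm{pl}}(g,\blank)] < \empmeas[L^{\mathrm{pl}}(f^{\circ},\blank)]$. Following the heuristic of Figure~\ref{fig:emp-risk-dens-problem}, I would redistribute the values of $f^{\circ}$ on the data-free open interval $(\tilde{T}_{(j)}, \tilde{T}_{(j+1)})$ so as to lower $\int_{0}^{\tilde{T}_i} e^{f(u)}\diff u$ for every observation with $\tilde{T}_i \geq \tilde{T}_{(j+1)}$, while leaving $f^{\circ}$ untouched at all observed time points (so the terms $\Delta_i f(\tilde{T}_i)$ do not change) and without increasing the sectional variation norm. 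Concretely, set $m := \inf_{u \in [\tilde{T}_{(j)}, \tilde{T}_{(j+1)}]} f^{\circ}(u)$, which is finite since $\Vert f^{\circ}\Vert_{\infty} \leq \Vert f^{\circ}\Vert_{v} \leq M$, and note $a > b \geq m$, hence $a > m$. Define $g = f^{\circ}$ on $[0,1] \setminus (\tilde{T}_{(j)}, \tilde{T}_{(j+1)})$; on the middle interval let $g$ decrease linearly from $a$ at $\tilde{T}_{(j)}$ to $m$ at $\tilde{T}_{(j)} + \delta$, be constant equal to $m$ on $[\tilde{T}_{(j)}+\delta, \tilde{T}_{(j+1)} - \delta']$, and increase linearly from $m$ to $b$ on $[\tilde{T}_{(j+1)}-\delta', \tilde{T}_{(j+1)}]$, for small $\delta, \delta' > 0$ with $\delta + \delta' < \tilde{T}_{(j+1)} - \tilde{T}_{(j)}$. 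One checks $g \in \D$: the three affine pieces join continuously, $g(\tilde{T}_{(j)}) = a = f^{\circ}(\tilde{T}_{(j)})$ and $g(\tilde{T}_{(j+1)}) = b = f^{\circ}(\tilde{T}_{(j+1)})$, and right-continuity of $f^{\circ}$ at these two points makes the glued function càdlàg.

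Next I would verify $\Vert g\Vert_{v} \leq M$. Since $g$ and $f^{\circ}$ agree on $[0,\tilde{T}_{(j)}]$ and on $[\tilde{T}_{(j+1)},1]$, additivity of the one-dimensional variation reduces this to $V\bigl(g;[\tilde{T}_{(j)},\tilde{T}_{(j+1)}]\bigr) \leq V\bigl(f^{\circ};[\tilde{T}_{(j)},\tilde{T}_{(j+1)}]\bigr)$ (and $g(0)=f^{\circ}(0)$). The left-hand side equals $(a-m)+(b-m)$ by construction. For the right-hand side, for each $\epsilon>0$ pick $u^{*} \in [\tilde{T}_{(j)},\tilde{T}_{(j+1)}]$ with $f^{\circ}(u^{*}) < m+\epsilon$ and evaluate the variation on the partition with break point $u^{*}$: this gives $V\bigl(f^{\circ};[\tilde{T}_{(j)},\tilde{T}_{(j+1)}]\bigr) \geq \lvert a - f^{\circ}(u^{*})\rvert + \lvert b - f^{\circ}(u^{*})\rvert \geq (a-m)+(b-m)-2\epsilon$, so letting $\epsilon \downarrow 0$ we get $V\bigl(f^{\circ};[\tilde{T}_{(j)},\tilde{T}_{(j+1)}]\bigr) \geq (a-m)+(b-m)$. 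Hence $\Vert g\Vert_{v} \leq \Vert f^{\circ}\Vert_{v} \leq M$.

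Finally I would compare the empirical risks. Because $\tilde{T}_{(j)}$ and $\tilde{T}_{(j+1)}$ are consecutive unique observed times, no $\tilde{T}_i$ lies in $(\tilde{T}_{(j)},\tilde{T}_{(j+1)})$, so $g(\tilde{T}_i) = f^{\circ}(\tilde{T}_i)$ for every $i$ and $\int_{0}^{\tilde{T}_i} e^{g(u)}\diff u = \int_{0}^{\tilde{T}_i} e^{f^{\circ}(u)}\diff u$ whenever $\tilde{T}_i \leq \tilde{T}_{(j)}$. For $\tilde{T}_i \geq \tilde{T}_{(j+1)}$ the two loss contributions differ only by $\int_{\tilde{T}_{(j)}}^{\tilde{T}_{(j+1)}} \bigl(e^{g(u)} - e^{f^{\circ}(u)}\bigr)\diff u$. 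Bounding $\int_{\tilde{T}_{(j)}}^{\tilde{T}_{(j+1)}} e^{g(u)}\diff u \leq e^{a}\delta + e^{m}\bigl(\tilde{T}_{(j+1)}-\tilde{T}_{(j)}\bigr) + e^{b}\delta'$ and using right-continuity of $f^{\circ}$ at $\tilde{T}_{(j)}$ with $f^{\circ}(\tilde{T}_{(j)}) = a > m$ (so $f^{\circ} > m$ on a right-neighbourhood of $\tilde{T}_{(j)}$ of positive length, whence $\int_{\tilde{T}_{(j)}}^{\tilde{T}_{(j+1)}} e^{f^{\circ}(u)}\diff u > e^{m}\bigl(\tilde{T}_{(j+1)}-\tilde{T}_{(j)}\bigr)$ by a fixed positive amount), this difference is strictly negative once $\delta,\delta'$ are small enough. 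Since $j+1 \leq n'$ there is at least one observation with $\tilde{T}_i = \tilde{T}_{(j+1)}$, so summing over $i$ gives $\empmeas[L^{\mathrm{pl}}(g,\blank)] < \empmeas[L^{\mathrm{pl}}(f^{\circ},\blank)]$, i.e., $f^{\circ}$ is not a minimizer.

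The main obstacle is the sectional-variation-norm constraint: a naive ``push $f^{\circ}$ down on $(\tilde{T}_{(j)},\tilde{T}_{(j+1)})$'' perturbation generally raises $\Vert f^{\circ}\Vert_{v}$ and therefore fails when $f^{\circ}$ already lies on the sphere $\{\Vert f\Vert_{v} = M\}$. Confining the modification to the single inter-observation interval and replacing $f^{\circ}$ there by the ``steep descent to the infimum / plateau at the infimum / steep re-ascent'' profile is precisely what keeps the variation on that interval from exceeding $V\bigl(f^{\circ};[\tilde{T}_{(j)},\tilde{T}_{(j+1)}]\bigr)$ while still forcing a strict gain in the $\int e^{f}$ terms; the only delicate points are the càdlàg gluing at $\tilde{T}_{(j)}$ and $\tilde{T}_{(j+1)}$ and the variation bookkeeping.
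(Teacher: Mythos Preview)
Your argument is correct and follows essentially the same strategy as the paper: replace $f^{\circ}$ on the data-free interval $(\tilde{T}_{(j)},\tilde{T}_{(j+1)})$ by a function hovering near the infimum $m$, so that the values at all observed $\tilde{T}_i$ are unchanged while the integrals $\int_0^{\tilde{T}_i}e^{f}$ strictly decrease for $i$ with $\tilde{T}_i\ge\tilde{T}_{(j+1)}$, and then invoke right-continuity at $\tilde{T}_{(j)}$ to secure the strict gain. The only cosmetic difference is the competitor: the paper keeps $f^{\circ}$ on a short initial segment $[\tilde{T}_{(j)},\tilde{T}_{(j)}+\epsilon)$ and then drops to the constant $V=m$ (a step modification), whereas you use linear ramps down to and back up from $m$; your explicit partition argument for $V(g;[\tilde{T}_{(j)},\tilde{T}_{(j+1)}])\le V(f^{\circ};[\tilde{T}_{(j)},\tilde{T}_{(j+1)}])$ is in fact more careful than the paper's one-line assertion that the modified function ``does not fluctuate more.''
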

\begin{proof}
  See Appendix~\ref{sec:negat-log-likel}.
\end{proof}

Proposition~\ref{prop:emp-risk-min-surv-dont-exist} implies that any estimator
\( \hat{f}_n \in \DD{1}_M \) of the log-hazard function which decreases between
two time points is not an empirical risk minimizer. Thus, unless the hazard
function that generated the data is non-decreasing, an empirical risk minimizer
either does not exist or is inconsistent. Proposition~\ref{prop:hal-surv-exists}
on the other hand shows that a HAL estimator can be found as the solution to a
convex optimization problem.
\begin{proposition}
  \label{prop:hal-surv-exists}
  Let \( f_{\beta, n} \) be the data-dependent model defined in
  equation~(\ref{eq:9}). The problem
  \begin{equation}
    \label{eq:32}
    \min_{\Vert \beta\Vert_{1} \leq M} \empmeas{[L^{\mathrm{pl}}(f_{\beta, n}, \blank)]},
  \end{equation}
  is convex and has a solution. For any solution \( \hat{\beta} \),
  \( f_{\hat{\beta},n} \) is a HAL estimator, i.e.,
  \begin{equation*}
    f_{\hat{\beta},n} \in \argmin_{f \in \D_{M,n}}\empmeas{[L^{\mathrm{pl}}(f, \blank)]}.
  \end{equation*}
\end{proposition}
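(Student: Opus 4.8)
The plan is to establish the two assertions in turn: first that the map $\beta \mapsto \empmeas{[L^{\mathrm{pl}}(f_{\beta,n}, \blank)]}$ is convex on the closed $\ell^1$-ball $\{\Vert \beta \Vert_1 \leq M\}$ and attains its minimum there, and second that any such minimizer $f_{\hat\beta,n}$ is in fact a minimizer over the whole sieve $\D_{M,n}$. The second part is essentially immediate from the definitions: by equation~(\ref{eq:8}) the sectional variation norm of $f_{\beta,n}$ equals $\Vert\beta\Vert_1$, so $\D_{M,n} = \{f_{\beta,n} : \Vert\beta\Vert_1 \leq M\}$ is precisely the image of the $\ell^1$-ball under $\beta \mapsto f_{\beta,n}$; minimizing the empirical risk over $\D_{M,n}$ is therefore the same optimization problem as~(\ref{eq:32}), and any solution $\hat\beta$ yields $f_{\hat\beta,n} \in \argmin_{f \in \D_{M,n}} \empmeas{[L^{\mathrm{pl}}(f,\blank)]}$ by construction. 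So the real content is the convexity-and-existence claim.

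For convexity, I would unpack the loss. For a single observation $O = (W, \tilde T, \Delta)$,
\begin{equation*}
  L^{\mathrm{pl}}(f_{\beta,n}, O) = \int_0^{\tilde T} e^{f_{\beta,n}(u, W)} \diff u - \Delta\, f_{\beta,n}(\tilde T, W).
\end{equation*}
Since $\beta \mapsto f_{\beta,n}(u,W) = \beta_0 + \sum_{\S}\sum_i \beta_i^{\S}\1\{X_{\S,i} \preceq (u,W)_{\S}\}$ is affine in $\beta$ for each fixed $(u,W)$, the integrand $u \mapsto e^{f_{\beta,n}(u,W)}$ is a composition of the convex increasing function $\exp$ with an affine function of $\beta$, hence convex in $\beta$; integrating over $u \in [0,\tilde T]$ (a positive measure) preserves convexity, and the term $-\Delta f_{\beta,n}(\tilde T, W)$ is affine in $\beta$. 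Averaging over the $n$ observations preserves convexity, so $\beta \mapsto \empmeas{[L^{\mathrm{pl}}(f_{\beta,n},\blank)]}$ is convex on $\R^{m(d,n)}$; restricting to the convex set $\{\Vert\beta\Vert_1 \leq M\}$ leaves it convex. I would also note the objective is continuous (indeed smooth) in $\beta$, which is clear from the same representation since each $f_{\beta,n}(u,W)$ is a finite linear combination of bounded indicators.

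For existence, the constraint set $\mathcal B_M = \{\beta \in \R^{m(d,n)} : \Vert\beta\Vert_1 \leq M\}$ is compact (closed and bounded in finite dimensions) and the objective is continuous on it, so a minimum is attained by the extreme value theorem — exactly as in the existence argument already used for the $\ell^2$-projection in the proof of Lemma~\ref{lemma:projection}. The main thing to be a little careful about is measurability/finiteness: one should check $L^{\mathrm{pl}}(f_{\beta,n}, O)$ is finite for every $\beta \in \mathcal B_M$ and every observed $O$, which holds because $f_{\beta,n}$ is bounded (its sup norm is at most $\Vert\beta\Vert_1 \le M$ by the triangle inequality, since each indicator is bounded by $1$), so $e^{f_{\beta,n}(u,W)} \leq e^M$ and the integral over $[0,\tilde T] \subseteq [0,1]$ is at most $e^M$; hence the empirical average is a finite continuous function on $\mathcal B_M$. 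I do not anticipate a genuine obstacle here — the only mild subtlety is being explicit that convexity of $\beta\mapsto e^{f_{\beta,n}(u,W)}$ comes from affineness of $\beta \mapsto f_{\beta,n}(u,W)$ together with convexity of $\exp$, and that this is preserved under integration in $u$ and averaging over the sample.
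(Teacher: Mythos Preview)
Your proposal is correct and follows essentially the same approach as the paper: convexity via the composition of the affine map $\beta \mapsto f_{\beta,n}(t,\mathbf{w})$ with $\exp$, and existence via continuity on the compact $\ell^1$-ball. The only minor difference is that the paper first rewrites $\int_0^{\tilde T} e^{f_{\beta,n}(u,W)}\diff u$ as an explicit finite sum (exploiting that $u\mapsto f_{\beta,n}(u,W)$ is piecewise constant between observed time points) before invoking convexity, whereas you argue directly that integration in $u$ against a positive measure preserves convexity; both routes are valid, and the paper's finite-sum formula is used elsewhere (for the Poisson-likelihood implementation) rather than being essential to this proof.
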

\begin{proof}
  See Appendix~\ref{sec:negat-log-likel}.
\end{proof}

We assume that the conditional hazard function for the right-censoring time
exists on \( [0,1) \) for all \( \mathbf{w} \in [0,1]^{d-1} \) and denote it by
\( \gamma(t, \mathbf{w}) \). We assume that \( \gamma \) is uniformly bounded
for all \( (t, \mathbf{w}) \in [0,1) \times [0,1]^{d-1} \). Without loss of
generality we can take
\begin{equation}
  \label{eq:33}
  P(\tilde{T} =1 \mid W= \mathbf{w}) = P(\tilde{T} =1, \Delta=0 \mid W= \mathbf{w}) =
  \exp{
    \left\{
      - \int_{[0,1)}  \gamma_0(s, \mathbf{w})  \diff s
    \right\}}.
\end{equation}
As \( T \) and \( C \) are assumed conditionally independent given \( W \), any
two uniformly bounded conditional hazard functions \( \alpha \) and \( \gamma \)
together with a marginal distribution for the covariate vector \( W \) uniquely
determine a distribution \( P \) for the observed data \( O \) through
equation~(\ref{eq:33}). We write $\alpha_P$ and $\gamma_P$ for the two
conditional hazard functions corresponding to a distribution \( P \), and let
\( f_P=\log \alpha_P \). We assume that \( W \) has a Lebesgue density and
denote this with $\omega_P$.

\begin{lemma}
  \label{lemma:unique-min}
  Let \( P \) be a distribution such that $\|\gamma_P\|_{\infty} < \infty$,
  \( \epsilon <\omega_P < 1/\epsilon\), for some \( \epsilon >0 \), and
  \( f_P \in \D_M \). Then for all \( f \in \D_M \),
  \begin{equation*}
    P[{L^{\mathrm{pl}}(f, \blank)-L^{\mathrm{pl}}(f_P, \blank)}] \asymp \Vert f-f_P\Vert_{\leb}^2.
  \end{equation*}
\end{lemma}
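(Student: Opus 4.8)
The plan is to establish the two-sided bound by analyzing the excess risk $P[L^{\mathrm{pl}}(f,\cdot) - L^{\mathrm{pl}}(f_P,\cdot)]$ directly. First I would compute the population risk explicitly. Using the observed-data structure $O=(W,\tilde T,\Delta)$ with $\tilde T = T\wedge 1 \wedge C$ and conditional independent censoring $C\independent T\mid W$, I would condition on $W=\mathbf w$ and integrate out $\tilde T$ and $\Delta$. Writing $S_{\alpha}(t,\mathbf w) = \exp\{-\int_0^t \alpha_P(u,\mathbf w)\diff u\}$ for the conditional survival function of $T$ and $S_{\gamma}$ the analogous quantity for $C$, the term $\E[\int_0^{\tilde T} e^{f(u,W)}\diff u\mid W]$ becomes $\int_0^1 e^{f(u,\mathbf w)} S_{\alpha}(u,\mathbf w) S_{\gamma}(u,\mathbf w)\diff u$ (by Fubini, since $\{u \le \tilde T\}$ has probability $S_{\alpha}(u,\mathbf w)S_{\gamma}(u,\mathbf w)$), and $\E[\Delta f(\tilde T,W)\mid W] = \int_0^1 f(u,\mathbf w)\alpha_P(u,\mathbf w) S_{\alpha}(u,\mathbf w) S_{\gamma}(u,\mathbf w)\diff u$. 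Hence, writing $g_P(u,\mathbf w) = S_{\alpha}(u,\mathbf w)S_{\gamma}(u,\mathbf w)\omega_P(\mathbf w)$ for the (sub-)density governing ``being still at risk at $u$'',
\begin{equation*}
  P[L^{\mathrm{pl}}(f,\cdot)] = \int_{[0,1]^{d-1}}\!\int_0^1 \bigl(e^{f(u,\mathbf w)} - f(u,\mathbf w)\alpha_P(u,\mathbf w)\bigr) g_P(u,\mathbf w)\,\diff u\,\diff\mathbf w.
\end{equation*}
Since $\alpha_P = e^{f_P}$, the integrand is $\phi(f(u,\mathbf w))$ where $\phi(a) = e^a - a\,e^{f_P(u,\mathbf w)}$, which is minimized at $a = f_P(u,\mathbf w)$; this simultaneously re-proves that $f_P$ is the minimizer.

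Next I would Taylor-expand. With $\phi(a) - \phi(f_P) = e^{f_P}\bigl(e^{a-f_P} - 1 - (a-f_P)\bigr)$, a second-order expansion gives $\phi(a)-\phi(f_P) = e^{f_P}\,e^{\xi - f_P}(a-f_P)^2/2$ for some $\xi$ between $a$ and $f_P$, i.e. $= \tfrac12 e^{\xi}(a-f_P)^2$. Therefore
\begin{equation*}
  P[L^{\mathrm{pl}}(f,\cdot) - L^{\mathrm{pl}}(f_P,\cdot)] = \tfrac12\int_{[0,1]^{d-1}}\!\int_0^1 e^{\xi(u,\mathbf w)}\,(f(u,\mathbf w)-f_P(u,\mathbf w))^2\, g_P(u,\mathbf w)\,\diff u\,\diff\mathbf w.
\end{equation*}
To turn this into $\asymp \|f - f_P\|_{\leb}^2$ I need uniform upper and lower bounds on the weight $e^{\xi}g_P$. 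Both $f$ and $f_P$ lie in $\D_M$, so they are uniformly bounded by $M$ (a càdlàg function with sectional variation norm $\le M$ has $\|f\|_\infty \le \|f\|_v \le M$), hence $\xi\in[-M,M]$ and $e^{-M}\le e^{\xi}\le e^{M}$. For $g_P$: the hypothesis $\epsilon < \omega_P < 1/\epsilon$ bounds the covariate density from above and below; $S_\gamma \ge \exp\{-\|\gamma_P\|_\infty\} > 0$ since $\|\gamma_P\|_\infty<\infty$, and $S_\gamma\le 1$; and $S_\alpha(u,\mathbf w)\ge \exp\{-\int_0^1 e^{f_P}\}\ge \exp\{-e^M\} > 0$ with $S_\alpha\le 1$. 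So $g_P$ is bounded above and below by positive constants depending only on $M$, $\epsilon$, and $\|\gamma_P\|_\infty$, and the same is true of the full weight $e^{\xi}g_P$. Plugging these bounds in yields $c\,\|f-f_P\|_{\leb}^2 \le P[L^{\mathrm{pl}}(f,\cdot)-L^{\mathrm{pl}}(f_P,\cdot)] \le C\,\|f-f_P\|_{\leb}^2$, which is the claim.

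The main obstacle is not any single step but making sure the normalization in equation~(\ref{eq:33}) is handled consistently: the lemma's hypotheses are stated in terms of $\alpha_P$, $\gamma_P$, $\omega_P$, and one must check that the ``without loss of generality'' convention $P(\tilde T = 1\mid W) = \exp\{-\int_{[0,1)}\gamma_0(s,\mathbf w)\diff s\}$ does not interfere with the Fubini computation of $P[L^{\mathrm{pl}}(f,\cdot)]$ — in particular that the atom of $\tilde T$ at $1$ contributes correctly (it does, since on $\{\tilde T = 1\}$ we have $\Delta = 0$ and $\int_0^{\tilde T} e^{f(u,W)}\diff u = \int_0^1 e^{f(u,W)}\diff u$, already accounted for in the Fubini step). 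A secondary point worth a line is the measurability and finiteness of $\xi(u,\mathbf w)$, which follows from the intermediate-value representation and the uniform boundedness of $f, f_P$ on $\D_M$. Everything else is routine.
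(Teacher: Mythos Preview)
Your proof is correct and takes a genuinely different, more elementary route than the paper. The paper first recognizes the excess partial-likelihood risk as the Kullback--Leibler divergence $\KL(P_{f_P}\,\|\,P_f)$ between the full observed-data laws (equation~\eqref{eq:23}), then sandwiches KL between squared Hellinger and $\chi^2$ distances of the densities $p_f$ with respect to a dominating measure that carries the point mass at $\tilde T=1$, and finally invokes Lemma~\ref{lemma:haz-dens-equi} to pass from $\|q^*_{f_P}-q^*_f\|_{\leb}$ and $\|S_{f_P}-S_f\|_{\leb}$ back to $\|f_P-f\|_{\leb}$. Your argument instead exploits the pointwise strict convexity of the Poisson integrand $a\mapsto e^a - a\,e^{f_P}$: after the Fubini/at-risk computation you express the excess risk directly as a weighted squared $L^2$-distance, so only uniform two-sided bounds on $e^{\xi}$ and on the at-risk weight $S_{\alpha}S_{\gamma}\omega_P$ are needed. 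This bypasses both the KL machinery and Lemma~\ref{lemma:haz-dens-equi}, yields explicit constants, and handles the atom at $\tilde T=1$ automatically (since on that event $\Delta=0$ and the Fubini identity already accounts for the integral term). The paper's route is more modular---the KL/Hellinger/$\chi^2$ inequalities are standard and transfer to other likelihoods---but is longer and requires the auxiliary density--hazard comparison lemma. One cosmetic remark: to sidestep any measurable-selection issue for $\xi$, you can simply use the deterministic two-sided bound $\tfrac12 e^{-|x|}x^2\le e^x-1-x\le \tfrac12 e^{|x|}x^2$ with $x=f-f_P\in[-2M,2M]$, which gives the same conclusion without invoking an intermediate value pointwise.
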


\begin{proof}
  The lemma essentially follow from general properties of the Kullback-Leibler
  divergence. However, due to the point-mass at \( t=1 \), a few additional
  arguments are needed which we present in Appendix~\ref{sec:negat-log-likel}.
\end{proof}

\begin{corollary}
  \label{cor:main-survival}
  Let \( P \) be a distribution such that $\|\gamma_P\|_{\infty} < \infty$,
  \( \epsilon <\omega_P < 1/\epsilon\), for some \( \epsilon >0 \), and let
  $\hat{f}_n$ be a HAL estimator based on the negative partial log-likelihood
  loss defined in equation~(\ref{eq:nll-loss-survival}). If
  Assumption~\ref{assum:f0} holds, then
  \begin{equation*}
    \Vert \hat{f}_n - f_P \Vert_{\leb} = \smallO_P(n^{-1/3}\log(n)^{2(d-1)/3}).
  \end{equation*}
\end{corollary}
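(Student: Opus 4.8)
The plan is to deduce Corollary~\ref{cor:main-survival} by verifying that the right-censored survival setting satisfies the hypotheses of Theorem~\ref{theorem:main-result}, with the loss $L^{\mathrm{pl}}$ from equation~(\ref{eq:nll-loss-survival}) and target parameter $f_P = \log\alpha_P$. Concretely, I must check Assumption~\ref{assum:loss-assum}~\ref{item:1}--\ref{item:5} (Assumption~\ref{assum:f0} is assumed in the statement), and then explain where the stronger $\smallO_P$ (rather than $\bigO_P$) comes from.

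First I would establish Assumption~\ref{assum:loss-assum}~\ref{item:1}: for $f\in\D_M$ we have $\|f\|_\infty\le M$, so $e^{f(u,W)}\le e^M$ and $|\Delta f(\tilde T,W)|\le M$, whence $\|L^{\mathrm{pl}}(f,\blank)\|_\infty\le e^M+M=:C$. Next, Assumption~\ref{assum:loss-assum}~\ref{item:2}, the quadratic margin condition, is \emph{exactly} the content of Lemma~\ref{lemma:unique-min}: under $\|\gamma_P\|_\infty<\infty$ and $\epsilon<\omega_P<1/\epsilon$ we have $P[L^{\mathrm{pl}}(f,\blank)-L^{\mathrm{pl}}(f_P,\blank)]\asymp\|f-f_P\|_{\leb}^2$, which is the two-sided bound required (with an adjusted constant $C$). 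For Assumption~\ref{assum:loss-assum}~\ref{item:5}, the entropy transfer, I would argue as follows: the map $f\mapsto L^{\mathrm{pl}}(f,\blank)$ from $(\D_M,\|\blank\|_{\leb})$ into $(\mathcal{L}_M,\|\blank\|_P)$ is Lipschitz up to the right power. Indeed $L^{\mathrm{pl}}(f,O)-L^{\mathrm{pl}}(g,O)=\int_0^{\tilde T}(e^{f(u,W)}-e^{g(u,W)})\diff u-\Delta(f(\tilde T,W)-g(\tilde T,W))$; since $|e^a-e^b|\le e^M|a-b|$ on $[-M,M]$, the first term is bounded in $\mathcal{L}^2(P)$ by $e^M$ times $\|f-g\|_{\leb}$ (using Fubini and the fact that the conditional density of $(\tilde T,W)$ — which exists and is bounded above under the stated assumptions, because $\omega_P<1/\epsilon$, $\gamma_P$ is bounded, and the subdensity of $\tilde T$ given $W$ is $\alpha_P(t,\mathbf w)e^{-\int_0^t(\alpha_P+\gamma_P)}$, also bounded — dominates Lebesgue measure on $[0,1]^d$), and the second term is bounded by $\|f-g\|_P\le C'\|f-g\|_{\leb}$ by the same density bound. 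Hence an $\epsilon/C$-bracket for $\D_M$ in $\|\blank\|_{\leb}$ yields an $\epsilon$-bracket for $\mathcal{L}_M$ in $\|\blank\|_P$, giving \ref{item:5} with $\kappa=1$; alternatively one can just cite Lemma~4, Appendix~B of \cite{bibaut2019fast} after checking its hypotheses. Combined with Proposition~\ref{prop:hal-surv-exists}, which guarantees that a HAL estimator exists (as the minimizer of the convex program~(\ref{eq:32})), Theorem~\ref{theorem:main-result} immediately gives $\|\hat f_n-f_P\|_{\leb}=\bigO_P(n^{-1/3}\log(n)^{2(d-1)/3})$.

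It remains to upgrade $\bigO_P$ to $\smallO_P$. The point is that the $\log(n)^{2(d-1)/3}$ factor in Theorem~\ref{theorem:main-result} is not sharp for this argument: tracing the proof in Appendix~\ref{sec:results-from-empir}, the rate $r_n$ for $\|\hat f_n-\hat\pi_n(\f)\|_{\leb}$ comes from balancing the modulus-of-continuity / maximal-inequality bound for the empirical process over brackets of $\mathcal{L}_{M}$ (whose bracketing integral carries the $\log(n)^{2(d-1)}$ metric-entropy factor of $\D_M$, cf.\ the entropy bounds for càdlàg functions with bounded sectional variation from \cite{van2017generally,bibaut2019fast}) against the variance term, and the approximation error $\|\hat\pi_n(\f)-\f\|_{\leb}=\bigO_P(n^{-1/2})$ from Lemma~\ref{lemma:projection} is of strictly smaller order. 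Since $n^{-1/2}=\smallo(n^{-1/3}\log(n)^{2(d-1)/3})$ and the empirical-process term is $\bigO_P(n^{-1/3}(\log n)^{2(d-1)/3})$ which is itself $\smallO_P(n^{-1/3}(\log n)^{2(d-1)/3+\delta})$ for any $\delta>0$ — more precisely, one can insert one extra $\log\log n$ or exploit that the actual balancing gives a rate with exponent on $\log n$ strictly below $2(d-1)/3$ when Assumption~\ref{assum:f0} removes the need to also pay for the bias at the deterministic HAL resolution — the combined bound is $\smallO_P(n^{-1/3}\log(n)^{2(d-1)/3})$. I would state this by noting that Theorem~\ref{theorem:main-result} in fact yields, for HAL estimators under Assumptions~\ref{assum:loss-assum} and~\ref{assum:f0}, a rate of the form $n^{-1/3}(\log n)^{\beta}$ with $\beta<2(d-1)/3$ strictly, because the sieve $\D_{M,n}$ has only $m(d,n)=\bigO(n)$ basis functions rather than $\bigO(n^d)$, so the effective bracketing entropy picks up fewer logarithmic factors; consequently the displayed rate with the non-strict exponent $2(d-1)/3$ holds with $\smallO_P$.

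The main obstacle I anticipate is the careful verification of Assumption~\ref{assum:loss-assum}~\ref{item:5} together with the density-domination facts it rests on: one must confirm that under the hypotheses of the corollary ($\|\gamma_P\|_\infty<\infty$, $\epsilon<\omega_P<1/\epsilon$, $f_P\in\D_M$) the joint law of $X=(W,\tilde T)$ on $[0,1]^d$ is sandwiched between constant multiples of Lebesgue measure on a region of full measure (the atom at $\tilde T=1$ from equation~(\ref{eq:33}) must be handled separately, exactly as in the proof of Lemma~\ref{lemma:unique-min}), so that $\|\blank\|_P$ and $\|\blank\|_{\leb}$ are equivalent on the relevant function classes. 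A secondary subtlety is making the $\bigO_P\to\smallO_P$ improvement rigorous; the cleanest route is to re-examine the Appendix~\ref{sec:results-from-empir} argument and observe that with $\kappa=1$ and the $\bigO(n)$-dimensional sieve the balancing of entropy integral against sample size produces an exponent on $\log n$ strictly smaller than $2(d-1)/3$, after which absorbing the gap into $\smallo(1)$ is immediate.
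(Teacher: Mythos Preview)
Your overall strategy---verify Assumption~\ref{assum:loss-assum} for $L^{\mathrm{pl}}$ and then invoke Theorem~\ref{theorem:main-result}---is exactly what the paper does. The checks of \ref{item:1} and \ref{item:2} (the latter via Lemma~\ref{lemma:unique-min}) match the paper.

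There is, however, a genuine gap in your verification of \ref{item:5}. A Lipschitz bound on $\|L^{\mathrm{pl}}(f,\blank)-L^{\mathrm{pl}}(g,\blank)\|_P$ controls \emph{covering} numbers, not \emph{bracketing} numbers: a bracket $[l,u]$ for $f$ does not automatically map to a bracket for $L^{\mathrm{pl}}(f,\blank)$ unless the map $f\mapsto L^{\mathrm{pl}}(f,\blank)$ is monotone. The paper exploits exactly this monotone structure, constructing explicit brackets $\tilde l_k(t,\delta,\mathbf w)=\delta l_k(t,\mathbf w)-\int_0^t e^{u_k}\diff s$ and $\tilde u_k(t,\delta,\mathbf w)=\delta u_k(t,\mathbf w)-\int_0^t e^{l_k}\diff s$, and then bounds their $\|\blank\|_P$-size. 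Relatedly, you cannot simply bound $\|f-g\|_P$ by $C'\|f-g\|_{\leb}$ via a density argument, because $P$ has an atom at $\tilde T=1$ (equation~(\ref{eq:33})); the paper handles this by using $\Delta=\Delta\1\{\tilde T<1\}$ a.s., so the term $\Delta f(\tilde T,W)$ only sees the absolutely continuous part of the law of $\tilde T$. You mention this obstacle at the end but do not resolve it in the argument itself. Also note that $L^{\mathrm{pl}}$ does \emph{not} satisfy the pointwise-loss condition~(\ref{eq:point-loss}), so citing Lemma~4 of \cite{bibaut2019fast} is not a legitimate shortcut here.

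Your attempt to upgrade $\bigO_P$ to $\smallO_P$ is not correct. The empirical-process bound in the proof of Theorem~\ref{theorem:main-result} (via Lemma~\ref{lemma:bound-emp-brack}) takes the supremum in $\Gamma_n(\delta)$ over \emph{all} of $\D_M$, not over the sieve $\D_{M,n}$; the fact that $\D_{M,n}$ has only $\bigO(n)$ basis functions plays no role in that bound, so it does not yield a strictly smaller logarithmic exponent. The balancing in Lemma~\ref{lemma:bound-emp-brack} gives exactly $r_n=n^{1/3}\log(n)^{-2(d-1)/3}$, with no slack. In fact the paper's own proof of Corollary~\ref{cor:main-survival} simply cites Theorem~\ref{theorem:main-result} and does not address the $\bigO_P$ versus $\smallO_P$ discrepancy at all; the $\smallO_P$ in the corollary statement appears to be an inconsistency in the paper rather than something the proof establishes.
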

\begin{proof}
  Corollary~\ref{cor:main-survival} follows from
  Theorem~\ref{theorem:main-result} and Lemma~\ref{lemma:unique-min}. Details
  are given in Appendix~\ref{sec:negat-log-likel}.
\end{proof}

We illustrate the HAL estimator of a conditional hazard function and the effect
of the sectional variation with the following example. Consider a study that
enrolls patients between the age of 20 and 60 to study the effect of a treatment
on death within one year after treatment. We simulate an artificial dataset such
that the hazard of death does not depend on age in the untreated group, while
the hazard of death among treated patients is lowered for patients younger than
40, but increased for older patients. Censoring is generated independently of
covariates and event times. As noted by \cite{rytgaard2021estimation}, the loss
in equation~(\ref{eq:14}) can be recognized as the negative log-likelihood of a
Poisson model. This implies that we can use existing software from the
\texttt{R}-packages \texttt{glmnet}
\citep{Friedman_Tibshirani_Hastie_2010_Regularizat_Paths_Generalized,Tay_Narasimhan_Hastie_2023_Elastic_Net_Regularizat}
and \texttt{hal9001}
\citep{hejazi2020hal9001,Coyle_Hejazi_Phillips_Laan_Laan_2022} to construct a
HAL estimator. The HAL estimator, computed on a simulated dataset of 200
patients, is displayed for the treated group in Figure~\ref{fig:hal-hazard}
across various values of the sectional variation norm \( M \). We illustrate the
corresponding estimate of the conditional survival function for both treatment
groups in Figure~\ref{fig:hal-survfun}.

\begin{figure}
  \centerline{\includegraphics[width=1\linewidth]{./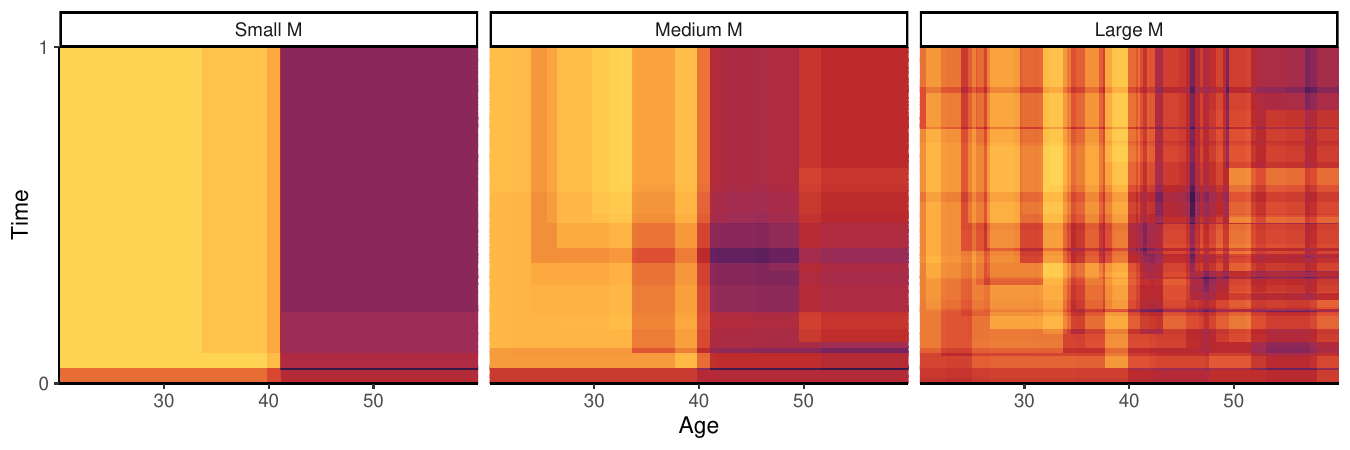}}
  \caption[]{The HAL estimator of the hazard function for the treated group
    based on a sample size of 200 from the simulated study with darker values
    corresponding to higher values of the hazard function. Estimates are shown
    for three different values of the sectional variation norm ($M$). }
  \label{fig:hal-hazard}
\end{figure}
  
\begin{figure}
  \centerline{\includegraphics[width=1\linewidth]{./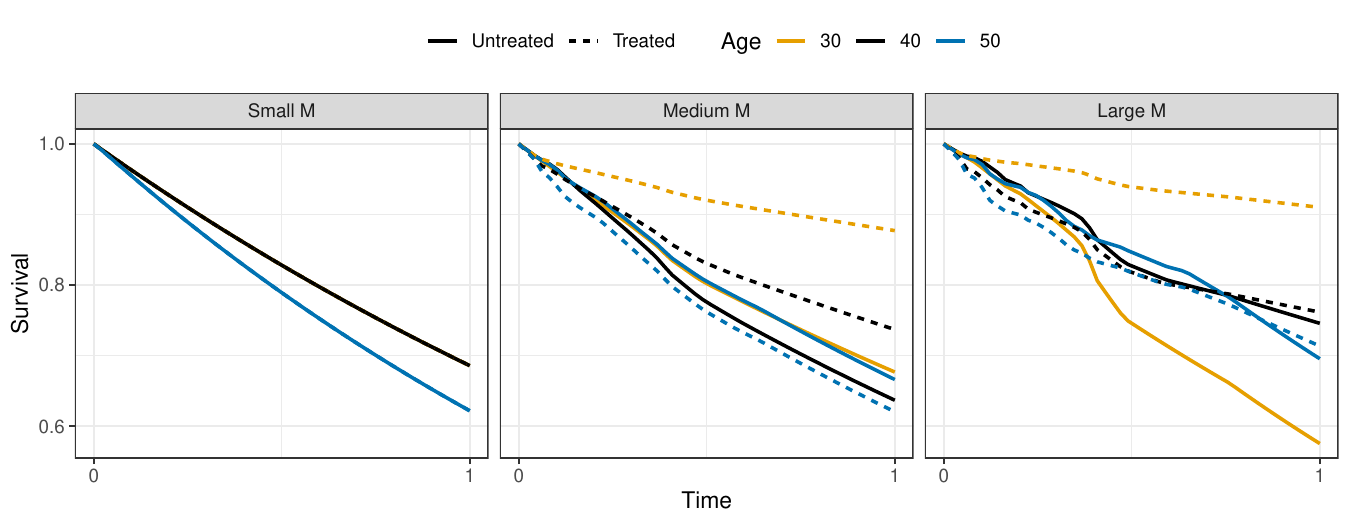}}
  \caption[]{Estimates of the survival function derived from the HAL estimator
    stratified on treatment and three different age values based on a sample of
    200 patients from the simulated study. Estimates are shown for three
    different values of the sectional variation norm ($M$).}
  \label{fig:hal-survfun}
\end{figure}

\section{Density estimation}
\label{sec:density-estimation}

Let \( U \in [0,1] \) and \( W \in [0,1]^{d-1} \) and consider estimation of the
conditional density of \( U \) given \( W \). In this section the available data
are \( O=(U,W) \), i.e., in the notation of the general setup of Section
\ref{sec:empir-risk-minim}, \(X=(U,W)\) and no additional variable \( Y \) is
observed. We parameterize the conditional density as an element of
\begin{equation}
  \label{eq:dens-par-hal}
  \mathcal{P}^d_M = 
  \left\{
    p \colon [0,1]^d \rightarrow \R_+ \midd
    \log p(u, \mathbf{w}) = f(u, \mathbf{w}) - \log
    \left(
      \int_0^1 e^{f(z, \mathbf{w})} \diff z
    \right),
    f \in \D_M
  \right\}.
\end{equation}
This parametrization is a natural one and has been used before for (univariate)
density estimation
\citep[e.g.,][]{leonard1978density,silverman1982estimation,gu1993smoothing}.
Note that any element of \( \mathcal{P}^d_M \) is a conditional density, and
that \( \mathcal{P}^d_M \) includes all conditional densities \( p \) such that
\( \log p \in \D_M\). Define the data-adaptive model
\begin{equation*}
  \mathcal{P}^d_{M,n} = 
  \left\{
    p \in \mathcal{P}_M^d \midd
    \log p(u, \mathbf{w}) = f(u, \mathbf{w}) - \log
    \left(
      \int_0^1 e^{f(z, \mathbf{w})} \diff z
    \right),
    f \in \D_{M,n}
  \right\},
\end{equation*}
and a HAL estimator as
\begin{equation}
  \label{eq:13}
  \hat{p}_n \in \argmin_{p \in \mathcal{P}^d_{M,n}} \empmeas{[-\log p]}.
\end{equation}

Proposition~\ref{prop:dens-convex} shows that a HAL estimator is well-defined
and can be found as the solution to a convex optimization problem.
\begin{proposition}
  \label{prop:dens-convex}
  Define the set of indices
  \( \mathcal{I} = \left\{ \{1\} \cup s : s \subset \{2, \dots, d\} \right\} \)
  and let
  \begin{equation*}
    g_{\beta, n}(\mathbf{x})
    = \sum_{i=1}^{n}\sum_{r \in \mathcal{I}}\beta_{i}^r\1{\{X_{r, i} \preceq \mathbf{x}_{r}\}},
    \quad \text{with} \quad    
    \beta = \{\beta^{r} = (\beta_{1}^r, \dots,\beta_{n}^r) : r \in \mathcal{I} \}.
  \end{equation*}
  The problem
  \begin{equation}
    \label{eq:20}
    \min_{\Vert \beta\Vert_1 \leq M} \empmeas{
      \left[
        \bar{L}(g_{\beta, n}, \blank)
      \right]},
    \quad \text{with} \quad
    \bar{L}(g, O) =   \log
    \left(
      \int_0^1 e^{g(z, W)} \diff z
    \right)
    - g(U, W),
  \end{equation}
  is convex and has a solution. For any solution \( \hat{\beta} \),
  \begin{equation*}
    p_{\hat{\beta},n} \in \argmin_{p \in \mathcal{P}^d_{M,n}}\empmeas{[-\log
      p]},
  \end{equation*}  
  where
  \begin{equation*}
    \log p_{\hat{\beta},n}(u,\mathbf{w}) = g_{\hat{\beta}, n}(u,\mathbf{w}) - \log
    {\left(
        \int_0^1 e^{g_{\hat{\beta}, n}(z,\mathbf{w})} \diff z
      \right)}.
  \end{equation*}
\end{proposition}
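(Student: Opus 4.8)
The plan is to establish the three claims in turn: convexity of the objective in \eqref{eq:20}, existence of a minimizer, and the identification of the minimizer with a HAL density estimator in the sense of \eqref{eq:13}. For convexity, I would first note that $\beta \mapsto g_{\beta,n}(\mathbf{x})$ is linear for every fixed $\mathbf{x}$, so $\beta \mapsto g_{\beta,n}(U,W)$ is linear, and $\beta \mapsto \int_0^1 e^{g_{\beta,n}(z,W)}\diff z$ is a convex function of $\beta$ (an integral of exponentials of affine functions). Since $\log$ is concave and increasing, one cannot conclude convexity of the composition directly; instead I would observe that the log-sum-exp-type map $g \mapsto \log\int_0^1 e^{g(z,W)}\diff z$ is convex in $g$ by Hölder's inequality (the continuous analogue of the standard log-sum-exp convexity), hence convex in $\beta$. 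Subtracting the linear term $g_{\beta,n}(U,W)$ preserves convexity, and averaging over the sample via $\empmeas$ preserves it, so the objective is convex on the convex constraint set $\{\|\beta\|_1 \le M\}$.

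For existence of a solution, I would argue that the map $\beta \mapsto \empmeas[\bar L(g_{\beta,n},\blank)]$ is continuous (the integrand depends continuously on $\beta$ by dominated convergence, since $g_{\beta,n}$ is uniformly bounded on the compact constraint set) and that $\{\beta : \|\beta\|_1 \le M\}$ is compact in $\R^{|\mathcal{I}|\cdot n}$; a continuous function on a compact set attains its minimum. This mirrors the existence argument in the proof of Lemma~\ref{lemma:projection}.

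For the identification step, the key point is that the constraint set $\mathcal{P}^d_{M,n}$ in \eqref{eq:13} is parametrized by $f \in \D_{M,n}$, i.e.\ by the full basis expansion \eqref{eq:9} with all sections $\S \in \mathcal{S}$, whereas $g_{\beta,n}$ uses only the index set $\mathcal{I} = \{\{1\}\cup s : s \subset \{2,\dots,d\}\}$, namely those sections that contain the coordinate $1$ (the $U$-coordinate). The observation is that adding to $f$ any basis function $\1\{X_{\S,i}\preceq \mathbf{x}_\S\}$ with $1 \notin \S$ contributes a term depending only on $\mathbf{w}$, which is therefore cancelled in the centered log-density $f(u,\mathbf{w}) - \log\int_0^1 e^{f(z,\mathbf{w})}\diff z$: the function $p$ is invariant under $f \mapsto f + (\text{function of }\mathbf{w}\text{ alone})$. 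Hence every $p \in \mathcal{P}^d_{M,n}$ can be represented using only the $\mathcal{I}$-sections, and one may as well optimize over $g_{\beta,n}$. I would also need to check that the $\ell_1$ constraint translates correctly: by \eqref{eq:8}, $\|f\|_v = \|\beta\|_1$, and discarding the $\S$-terms with $1 \notin \S$ only decreases the norm, so any density achievable with $\|f\|_v \le M$ is achievable with a $g_{\beta,n}$ satisfying $\|\beta\|_1 \le M$, and conversely. Combining: minimizing $\empmeas[-\log p]$ over $\mathcal{P}^d_{M,n}$ is equivalent to minimizing \eqref{eq:20}, and the stated formula for $\log p_{\hat\beta,n}$ records the correspondence.

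The main obstacle I anticipate is the identification step — specifically, making rigorous the claim that the $\mathcal{I}$-sections suffice, which requires carefully unpacking how the basis functions in \eqref{eq:9} split into "depends on $u$" versus "depends only on $\mathbf{w}$" pieces (a basis function $\1\{X_{\S,i}\preceq\mathbf{x}_\S\}$ with $1 \in \S$ still depends on $\mathbf{w}$ through the other coordinates of $\S$, so the split is not simply by whether $\S$ is a singleton) and verifying that the norm bookkeeping goes through in both directions so that no density is lost or gained by the reparametrization. The convexity argument, by contrast, is routine once the log-sum-exp convexity is invoked, and existence is a standard compactness argument.
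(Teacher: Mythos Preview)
Your proposal is correct and follows essentially the same approach as the paper: convexity of the log-integral term via H\"older's inequality, existence by continuity plus compactness of the $\ell_1$-ball, and the identification step via the observation that basis functions indexed by sections $\S$ with $1 \notin \S$ (together with the intercept $\beta_0$) depend only on $\mathbf{w}$ and hence cancel in the centered log-density. The paper phrases the last point as invariance of the loss $\bar L$ under $f \mapsto f + a$ for $a$ a function of $\mathbf{w}$ alone (your equation is the invariance of $p$ itself), but the content is identical, and your norm bookkeeping remark---that dropping those terms only decreases $\|\beta\|_1$, while conversely any $g_{\beta,n}$ with $\|\beta\|_1 \le M$ already lies in $\D_{M,n}$---is exactly what is needed.
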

\begin{proof}
  See Appendix~\ref{sec:density-estimation-1}
\end{proof}
Proposition \ref{prop:dens-convex} shows that the HAL estimator defined in
equation~(\ref{eq:13}) does not need to include basis functions that are only
functions of \( w \), so the number of basis functions is reduced to
\( |\mathcal{I}|=n2^{d-1} \).

We assume that \( (U,W) \sim P \) for some distribution
\( P \ll \leb \). For a distribution \( P \), let \( p_P \) denote the
conditional density of \( U \) given \( W \) and \( \omega_P \) the
marginal density of \( W \) with respect to \( \lambda \).

\begin{corollary}
  \label{cor:hal-dens}
  Let \( P \) be a distribution such that
  \( \epsilon < \omega_P  < 1/\epsilon \), for some $\epsilon>0$, and
  \( p_P \in \mathcal{P}_M^d \), and let $\hat{p}_n$ be a HAL estimator as
  defined in equation~(\ref{eq:13}). If Assumption~\ref{assum:f0} holds when
  \( \f \) is the minimizer of \( f \mapsto P{[\bar{L}(f, \blank)]} \) over
  \( \D_M \), then
  \begin{equation*}
    \Vert \hat{p}_n - p_P \Vert_{\leb} = \smallO_P(n^{-1/3}\log(n)^{2(d-1)/3}).
  \end{equation*}
\end{corollary}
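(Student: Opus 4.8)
The plan is to reduce the statement to Theorem~\ref{theorem:main-result} applied to the loss $\bar{L}$ from Proposition~\ref{prop:dens-convex}, and then transfer the $\mathcal{L}^2(\leb)$-rate for $\hat f_n := g_{\hat\beta,n}$ to a rate for $\hat p_n$ via the Lipschitz map $f \mapsto p_f$ defined by $\log p_f(u,\mathbf{w}) = f(u,\mathbf{w}) - \log\int_0^1 e^{f(z,\mathbf{w})}\diff z$. First I would argue that the HAL estimator $\hat p_n$ of equation~(\ref{eq:13}) is exactly $p_{\hat\beta,n}$ for a minimizer $\hat\beta$ of the convex program in~(\ref{eq:20}) — this is the content of Proposition~\ref{prop:dens-convex} — so it suffices to control $\|g_{\hat\beta,n} - \f\|_{\leb}$, where $\f = \argmin_{f\in\D_M} P[\bar L(f,\blank)]$. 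Note that $\bar L$ has the pointwise form~(\ref{eq:point-loss}) only after integrating out $z$, so some care is needed, but $\bar L$ depends on $f$ only through the section $(z,\mathbf{w})\mapsto f(z,\mathbf{w})$ and the restriction is harmless: the reduced basis set $\mathcal{I}$ still spans a space $\D_{M,n}$-equivalent for the purposes of $\bar L$, and Theorem~\ref{theorem:main-result} applies with the sieve generated by $\mathcal{I}$ rather than all of $\mathcal{S}$ — this only changes constants in the bracketing-entropy bound.

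Next I would verify Assumption~\ref{assum:loss-assum} for $\bar L$. Part~\ref{item:1} (uniform boundedness of $\bar L(f,\blank)$) follows because $f\in\D_M$ is uniformly bounded by $M$, hence $|g(U,W)|\le M$ and $\log\int_0^1 e^{g(z,W)}\diff z \in [-M,M]$; so $\|\bar L(f,\blank)\|_\infty \le 2M$. Part~\ref{item:2}, the quadratic margin condition $C^{-1}\|f-\f\|_\leb^2 \le P[\bar L(f,\blank)-\bar L(\f,\blank)] \le C\|f-\f\|_\leb^2$, is the analogue of Lemma~\ref{lemma:unique-min} for the conditional-density loss; it should follow from a second-order Taylor expansion of the Kullback–Leibler divergence $\E_W[\KL(p_P(\blank\mid W)\,\|\,p_f(\blank\mid W))]$ together with the hypothesis $\epsilon<\omega_P<1/\epsilon$, which makes $\|f-\f\|_\leb^2$ comparable to the $W$-averaged $\mathcal L^2$-distance of log-densities, and the uniform bounds on $f$, which keep the densities $p_f$ bounded away from $0$ and $\infty$ — so the Hessian of the loss is bounded above and below. (Here one also uses that $P[\bar L(f,\blank)] - P[\bar L(\f,\blank)]$ equals exactly that averaged KL divergence up to the constant $\E_W[\log p_P]$, since $\bar L$ is a proper scoring rule for the conditional density.) Part~\ref{item:5}, the bracketing-number comparison, follows because $f\mapsto \bar L(f,\blank)$ is Lipschitz from $(\D_M,\|\blank\|_\infty)$ into $(\mathcal{L}_M,\|\blank\|_P)$ with a bounded Lipschitz constant (again using the uniform bounds to differentiate $\log\int e^f$), so an $\epsilon/C$-bracket in sup-norm for $\D_M$ maps to an $\epsilon$-bracket for $\mathcal{L}_M$; this is exactly the kind of argument in Lemma~4 of Appendix~B of \citep{bibaut2019fast}.

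With Assumptions~\ref{assum:loss-assum} and~\ref{assum:f0} in place for $\bar L$, Theorem~\ref{theorem:main-result} gives $\|g_{\hat\beta,n}-\f\|_\leb = \bigO_P(n^{-1/3}\log(n)^{2(d-1)/3})$. The last step is to pass from $f$ to $p_f$: since $\log p_f(u,\mathbf{w}) = f(u,\mathbf{w}) - \log\int_0^1 e^{f(z,\mathbf{w})}\diff z$, the map $f\mapsto \log p_f$ is Lipschitz in $\mathcal L^2(\leb)$ with constant bounded in terms of $M$ (differentiating the normalizing term and using $|f|\le M$), and then $p \mapsto \log p$ is bi-Lipschitz on the set of densities bounded in $[e^{-2M},e^{2M}]$; composing, $\|\hat p_n - p_P\|_\leb \lesssim \|g_{\hat\beta,n}-\f\|_\leb$. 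Finally, to upgrade $\bigO_P$ to $\smallO_P$ at the rate $n^{-1/3}\log(n)^{2(d-1)/3}$ — matching the statement of the corollary — I would note, as in the proof of Corollary~\ref{cor:main-survival}, that the $\log(n)^{2(d-1)/3}$ factor in Theorem~\ref{theorem:main-result} is a genuine overestimate here (or absorb an extra arbitrarily-slowly-diverging factor), so the bound holds with $\smallO_P$ of that rate. The main obstacle is the margin condition (Assumption~\ref{assum:loss-assum}~\ref{item:2}) for $\bar L$: one must control the normalizing integral carefully so that the KL divergence is genuinely quadratic in $\|f-\f\|_\leb$, uniformly over $\mathbf{w}$, and the lower bound requires that $f$ and $\f$ cannot differ substantially in $\mathcal L^2$ while producing nearly identical conditional densities — which is where $\epsilon<\omega_P$ and the uniform sup-norm bound on $\D_M$ are both essential.
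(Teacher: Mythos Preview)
Your strategy matches the paper's: invoke Proposition~\ref{prop:dens-convex} to identify $\hat p_n$ with $p_{\hat\beta,n}$, verify Assumption~\ref{assum:loss-assum} for $\bar L$, apply Theorem~\ref{theorem:main-result} to obtain a rate for $g_{\hat\beta,n}$, and then transfer to $\hat p_n$ via the Lipschitz map $f\mapsto p_f$. The paper is terser on each point but follows exactly this line.

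That said, there is a genuine gap at Assumption~\ref{assum:loss-assum}\ref{item:2} which you (and, in fact, the paper) do not resolve. By equation~(\ref{eq:38}) the loss $\bar L$ is invariant under adding any function of $\mathbf w$ alone, so the minimizer of $P[\bar L(f,\blank)]$ over $\D_M$ is not unique, and the lower bound $C^{-1}\|f-\f\|_\leb^2\le P[\bar L(f,\blank)-\bar L(\f,\blank)]$ fails for $f=\f+a(\mathbf w)$ with $a\not\equiv 0$ and $\|\f+a\|_v\le M$ (which is possible since $\|\f\|_v<M$ by Assumption~\ref{assum:f0}\ref{item:11}). Your closing sentence asserts exactly the opposite --- that $f$ and $\f$ cannot differ substantially in $\mathcal L^2(\leb)$ while producing nearly identical conditional densities --- and this is precisely what breaks. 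Consequently condition~(C1) in the proof of Theorem~\ref{theorem:main-result} cannot be verified for $\bar L$, and the intermediate rate on $\|g_{\hat\beta,n}-\f\|_\leb$ is not established by this route. One clean repair is to run the empirical-process argument directly at the density level: take parameter space $\mathcal P_M^d$, loss $p\mapsto-\log p$, and target $p_P$; uniqueness then holds, the KL margin condition is standard for uniformly bounded densities, and both the bracketing entropy and the Lemma~\ref{lemma:projection} projection transfer from $\D_M$ to $\mathcal P_{M,n}^d$ via the Lipschitz map $f\mapsto p_f$ you already identified, yielding the rate for $\hat p_n$ without the detour through $g_{\hat\beta,n}$.
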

\begin{proof}
  Corollary~\ref{cor:hal-dens} follows from Theorem~\ref{theorem:main-result}.
  See Appendix~\ref{sec:density-estimation-1} for a detailed proof.
\end{proof}

A density can be obtained from a hazard function. This implies that an
alternative density estimator can be constructed by first using the HAL
estimator defined in Section~\ref{sec:censored-data} to estimate the
corresponding log-hazard function and then transforming this into a density. We
refer to this estimator as a `HAL hazard parametrization' and to the estimator
defined in equation~(\ref{eq:13}) as a `HAL density parametrization'. We compare
these two estimators in Figure~\ref{fig:density-pars}, where we have fitted both
estimators to a simulated univariate dataset. The estimators are implemented
using the convex optimization package \texttt{CVXR} in \texttt{R}
\citep{Fu_Narasimhan_Boyd_2020_Cvxr} and the bounds \( M \) on the sectional
variation norms are selected using cross-validation. We see that the estimator
based on the hazard parametrization can exhibit erratic behavior at the end of
the interval. The reason is that assuming a log-hazard function belongs to
\( \D_M \) implies that the corresponding density will not integrate to one. To
see this, observe that the conditional survival function associated with a
log-hazard function \(f\in\D_M\) evaluated at \( t=1 \) is
\begin{equation*}
  \exp{
    \left\{
      - \int_0^1 e^{f(z, \mathbf{w})} \diff z
    \right\}}
  \geq \exp{
    \left\{
      -e^M
    \right\}}>0.
\end{equation*}
Thus when the support of \( U \) is \( [0,1] \), the assumption that the
log-hazard belongs to \( \D_M \) will be wrong by definition for any
\( M < \infty \). We argue that the parametrization in
equation~(\ref{eq:dens-par-hal}) is better suited when \( U \) is known to have
support in \( [0,1] \).

\begin{figure}
  \centerline{\includegraphics[width=1\linewidth]{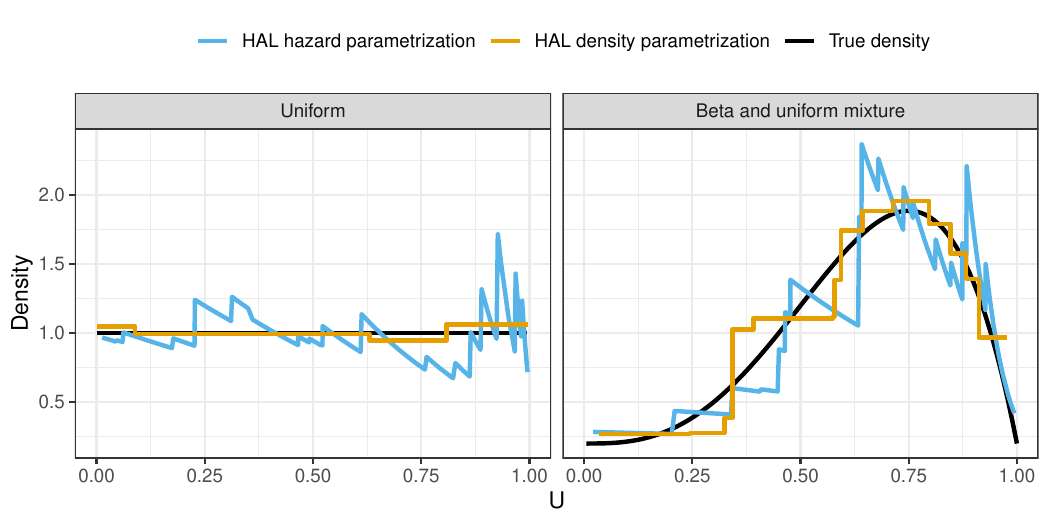}}
  \caption[]{Two different density estimators under two different
    data-generating distributions. We generated 200 samples from a uniform
    distribution (left panel) and a mixture of a beta distribution and a uniform
    distribution (right panel). The `HAL hazard parametrization' refers to a
    density estimator obtained from a HAL estimator of a hazard function, which
    was defined in Section~\ref{sec:censored-data}. The `HAL density
    parametrization' refers to the HAL estimator defined in
    equation~(\ref{eq:13}).}
  \label{fig:density-pars}
\end{figure}

\section{Least-squares regression}
\label{sec:regression-function}

Let \( O = (X, Y) \) for \( X \in [0,1]^d \) and \( Y \in [-B,B] \) for some
\( B < \infty \), and define
\begin{equation*}
  f_P(\mathbf{x}) = \E{\left[ Y \mid X=\mathbf{x} \right]},
  \quad \text{when} \quad (X,Y) \sim P.
\end{equation*}
In this section we consider estimation of \( f_P \) using the squared error loss
\begin{equation}
  \label{eq:35}
  L^{\mathrm{se}}(f, O) = (f(X) - Y)^2.
\end{equation}
We here use \( \omega_P \) to denote the Lebesgue density of \( X \) which we
assume to exist.

\begin{corollary}
  \label{cor:least-squar-hal}
  Let \( P \) be a distribution such that
  \( \epsilon < \omega_P < 1/\epsilon \), for some $\epsilon>0$, and
  \( f_P \in \D_M \), and let $\hat{f}_n$ be a HAL estimator based on the
  squared error loss defined in equation~(\ref{eq:35}). If
  Assumption~\ref{assum:f0} holds, then
  \begin{equation*}
    \Vert \hat{f}_n - f_P \Vert_{\leb} = \smallO_P(n^{-1/3}\log(n)^{2(d-1)/3}).
  \end{equation*}
\end{corollary}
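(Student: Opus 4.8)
The plan is to verify that the squared error loss satisfies Assumption~\ref{assum:loss-assum} in this setting, and then invoke Theorem~\ref{theorem:main-result} directly, exactly as was done for the survival and density corollaries. First I would check condition~\ref{item:1}: since $f \in \D_M$ implies $\Vert f \Vert_\infty \leq M$ (a standard consequence of the bound on the sectional variation norm) and $|Y| \leq B$, the loss $L^{\mathrm{se}}(f,(\mathbf{x},y)) = (f(\mathbf{x})-y)^2$ is bounded by $(M+B)^2$, so~\ref{item:1} holds. Next, for condition~\ref{item:2}, the key identity is the usual bias--variance decomposition for squared error loss: for any $f$,
\begin{equation*}
  P[L^{\mathrm{se}}(f,\blank) - L^{\mathrm{se}}(f_P, \blank)] = \E\left[(f(X)-Y)^2 - (f_P(X)-Y)^2\right] = \E\left[(f(X)-f_P(X))^2\right] = \Vert f - f_P \Vert_{\omega_P \cdot \leb}^2,
\end{equation*}
using that $f_P(X) = \E[Y\mid X]$ so the cross term vanishes. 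Because $\epsilon < \omega_P < 1/\epsilon$, this $\mathcal{L}^2(\omega_P \cdot \leb)$-norm is equivalent to the $\mathcal{L}^2(\leb)$-norm up to the constants $\epsilon$ and $1/\epsilon$, which gives condition~\ref{item:2} with a suitable $C$. This step also identifies $f_P$ as the minimizer $\f$ of $f \mapsto P[L^{\mathrm{se}}(f,\blank)]$ over $\D_M$, consistent with the hypothesis that Assumption~\ref{assum:f0} holds for $\f = f_P$.

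For condition~\ref{item:5}, the bracketing-entropy comparison, I would argue that the map $f \mapsto L^{\mathrm{se}}(f,\blank)$ is Lipschitz in a way that transfers brackets: if $f_L \leq f \leq f_U$ pointwise with $\Vert f_U - f_L \Vert_{\leb} \leq \epsilon$, then since all functions involved are uniformly bounded by $M$ and $|Y|\leq B$, one has $|(f(X)-Y)^2 - (f_L(X)-Y)^2| \leq 2(M+B)|f(X)-f_L(X)| \leq 2(M+B)(f_U(X)-f_L(X))$, so a bracket of $\D_M$ of $\leb$-size $\epsilon$ yields a bracket of $\mathcal{L}_M$ of $P$-size at most $2(M+B)\epsilon$ (after passing from $\leb$ to $P$, again using $\omega_P < 1/\epsilon$). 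Hence $N_{[\,]}(\epsilon, \mathcal{L}_M, \Vert\blank\Vert_P) \leq N_{[\,]}(\epsilon/C', \D_M, \Vert\blank\Vert_{\leb})$ for an appropriate $C'$, which is~\ref{item:5} with $\kappa = 1$. Alternatively I could simply cite Lemma~4 in Appendix~B of \citep{bibaut2019fast}, which gives general sufficient conditions on the loss covering the squared error case. With Assumptions~\ref{assum:loss-assum} and~\ref{assum:f0} both verified, Theorem~\ref{theorem:main-result} yields $\Vert \hat{f}_n - f_P\Vert_{\leb} = \bigO_P(n^{-1/3}\log(n)^{2(d-1)/3})$; the improvement to $\smallO_P$ is obtained exactly as in Corollaries~\ref{cor:main-survival} and~\ref{cor:hal-dens}, by exploiting a little slack in the rate (e.g.\ absorbing a vanishing factor coming from the strict inequality $\Vert f_P\Vert_v < M$ or from the $\smallO$ in the entropy bound).

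I do not expect a genuine obstacle here — the squared error loss is the best-behaved of the three cases treated, and indeed the one for which the empirical risk minimizer itself is known to be well-defined \citep{fang2021multivariate}. The only mild subtlety is bookkeeping the constants when switching between the measures $\leb$, $P$, and $\omega_P\cdot\leb$, which is why the hypothesis $\epsilon < \omega_P < 1/\epsilon$ is needed; this is routine. The detailed proof would be deferred to an appendix, mirroring the structure of the proofs of Corollary~\ref{cor:main-survival} and Corollary~\ref{cor:hal-dens}.
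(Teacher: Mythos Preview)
Your proposal is correct and follows essentially the same approach as the paper: verify Assumption~\ref{assum:loss-assum} for the squared error loss (identifying $\f = f_P$ via the proper-scoring-rule property, checking boundedness and the excess-risk equivalence via the bounded density $\omega_P$, and handling the bracketing condition either directly or by citing \citep{bibaut2019fast}), then apply Theorem~\ref{theorem:main-result}. Your write-up is in fact more detailed than the paper's terse proof; note that the paper, like you, does not explicitly justify the passage from $\bigO_P$ (as in Theorem~\ref{theorem:main-result}) to $\smallO_P$ (as stated in the corollaries).
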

\begin{proof}
  We show that Assumption~\ref{assum:loss-assum} holds for the squared error
  loss, and so Corollary~\ref{cor:least-squar-hal} follows from
  Theorem~\ref{theorem:main-result}. First note that because the squared error
  loss is a strictly proper scoring rule \citep{gneiting2007strictly}, the
  assumption that \( f_P \in \D_M \) implies that \( \f = f_P \) a.e.
  Conditions~\ref{assum:loss-assum}~\ref{item:1}-\ref{item:2} hold by the
  definition of the squared error loss and the assumption that \(Y \) and
  $\omega_P$ are bounded. Condition~\ref{assum:loss-assum}~\ref{item:5} holds by
  Proposition 3 and Lemma~4 in Appendix~B of \citep{bibaut2019fast}.
\end{proof}

For the squared error loss an empirical risk minimizer as defined in
equation~(\ref{eq:emp-full-min}) exists. This was formally shown by
\cite{fang2021multivariate}. The authors also derive an algorithm for finding a
collection of basis functions that is sufficient to construct an empirical risk
minimizer. We illustrate the difference between the HAL estimator and the
empirical risk minimizer by comparing the number of basis functions needed to
calculate the two estimators for different sample sizes and dimensions. The
results are shown in Figure~\ref{fig:n-basis-func}. We see that a HAL estimator
can be constructed using much fewer basis functions.

\begin{figure}
  \centerline{\includegraphics[width=1\linewidth]{./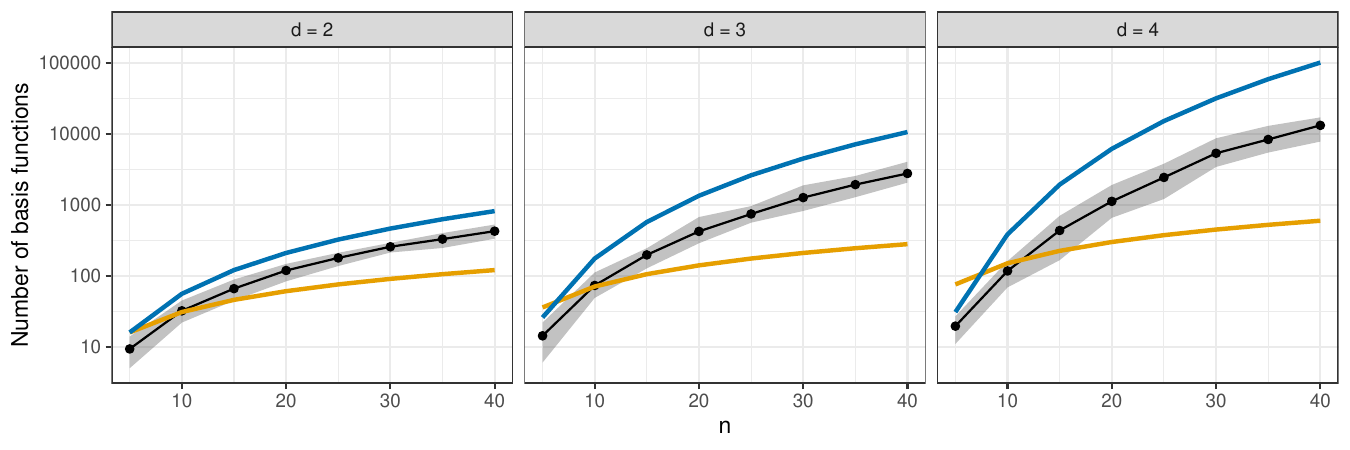}}
  \caption[]{The black line is the average number of basis functions needed to
    calculate the empirical risk minimizer with ribbons denoting the 2.5\%- and
    97.5\%-quantiles based on 200 simulations of uniformly distributed
    covariates. The number of observations is denoted by \( n \) and the
    dimension by \( d \). The blue line is a deterministic upper bound on this
    number (see Lemma 3.5 of \cite{fang2021multivariate}). The orange line is
    the number of basis functions needed to calculate a HAL estimator. }
  \label{fig:n-basis-func}
\end{figure}

\section{Discussion}
\label{sec:discussion}

Our main result relies on the smoothness assumption~\ref{assum:f0}~\ref{item:3}.
However, another HAL estimator could be defined using a finer sieve than the one
we have chosen, and we expect that Assumption~\ref{assum:f0}~\ref{item:3} can be
relaxed. An interesting question is how much we can reduce the number of basis
functions and still achieve the same rate of convergence, and whether we need to
impose additional smoothness assumptions for this to hold.

Throughout this paper we have stated that an empirical risk minimizer does not
exist or is inconsistent when a density or a hazard function is estimated.
Formally, our Proposition~\ref{prop:emp-risk-min-surv-dont-exist} does not rule
out, however, that a consistent empirical risk minimizer can exist in the special case
that the data-generating hazard function is non-decreasing. If the
data-generating hazard function is believed to be monotone, it is natural to use
shape-constrained estimators \citep{groeneboom2014nonparametric}. An interesting
direction for future research is to investigate HAL-like estimators when
biologically motivated monotonicity constraints are imposed.

\begin{appendix}
  
\section{Càdlàg functions and measures}
\label{sec:cadl-funct-meas}

To prove the results from Section~\ref{sec:funct-bound-sect}, we start by
proving the following two lemmas.
\begin{lemma}
  \label{lemma:unif-limit}
  For a function \( f \colon [0,1]^d \rightarrow \R \) and a sequence of
  functions \( f_n \colon [0,1]^d \rightarrow \R\), \( n \in \N \), assume that
  \( \| f_n -f\|_{\infty} \rightarrow 0\) when \( n \rightarrow \infty \). If
  \( f_n \in \D_M \) for all \( n \in \N \) then \( f \in \D_M\).
\end{lemma}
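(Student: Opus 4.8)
The plan is to show that the defining property of $\D_M$—the existence and correct value of directional limits along all $2^d$ quadrant directions, together with the bound $\|f\|_v \le M$—is preserved under uniform convergence. The statement has two parts: that $f$ is càdlàg (in the sense of Definition~\ref{def:cadlag}), and that $\|f\|_v \le M$.

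First I would handle the càdlàg property. Fix $\mathbf{u} \in [0,1]^d$, $\mathbf{a} \in \{0,1\}^d$, and a sequence $\{\mathbf{u}_k\} \subset Q_{\mathbf{a}}(\mathbf{u})$ converging to $\mathbf{u}$. I need to show $\lim_k f(\mathbf{u}_k)$ exists. The natural approach is a Cauchy argument: for indices $k,\ell$, write $|f(\mathbf{u}_k) - f(\mathbf{u}_\ell)| \le |f(\mathbf{u}_k) - f_n(\mathbf{u}_k)| + |f_n(\mathbf{u}_k) - f_n(\mathbf{u}_\ell)| + |f_n(\mathbf{u}_\ell) - f(\mathbf{u}_\ell)|$; bound the outer terms by $2\|f_n - f\|_\infty$, choose $n$ large to make this small, then use that $f_n$ is càdlàg (so $\{f_n(\mathbf{u}_k)\}_k$ converges, hence is Cauchy) to make the middle term small for $k,\ell$ large. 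This shows $\{f(\mathbf{u}_k)\}$ is Cauchy, hence convergent. For the case $\mathbf{a} = \mathbf{1}$, I additionally need the limit to equal $f(\mathbf{u})$; here I would note $|{\lim_k f(\mathbf{u}_k)} - f(\mathbf{u})| \le |{\lim_k f(\mathbf{u}_k)} - \lim_k f_n(\mathbf{u}_k)| + |\lim_k f_n(\mathbf{u}_k) - f_n(\mathbf{u})| + |f_n(\mathbf{u}) - f(\mathbf{u})|$, where the middle term vanishes because $f_n$ is càdlàg, and the outer terms are each at most $\|f_n - f\|_\infty \to 0$. A small point to check is that the limit of the sequence does not depend on the chosen sequence within a fixed quadrant, but this follows by interleaving two such sequences into one.

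For the variation bound, I would use that for any fixed box $A = (\mathbf{a},\mathbf{b}]$ the quasi-volume $\Delta(\blank; A)$ is a finite linear combination (with coefficients $\pm 1$) of evaluations at the $2^d$ vertices of $A$, hence $|\Delta(f;A) - \Delta(f_n;A)| \le 2^d \|f_n - f\|_\infty \to 0$, so $\Delta(f_n; A) \to \Delta(f; A)$ for every box. The same holds for sections, since a section $f_{\S}$ is just $f$ restricted/evaluated with the complementary coordinates set to zero, and $\|(f_n)_{\S} - f_{\S}\|_\infty \le \|f_n - f\|_\infty$. Now for any fixed partition $\mathcal{P}$ (finitely many boxes), $\sum_{A \in \mathcal{P}} |\Delta(f_n; A)| \to \sum_{A \in \mathcal{P}} |\Delta(f;A)|$ by continuity of absolute value and finiteness of the sum, and since $\sum_{A} |\Delta(f_n;A)| \le V((f_n)_{\S}) \le \|f_n\|_v \le M$ for each $n$, the limit satisfies $\sum_{A \in \mathcal{P}} |\Delta(f;A)| \le M$; taking the supremum over partitions $\mathcal{P}$ gives $V(f_{\S}) \le M$. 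Summing over $\S \in \mathcal{S}$ and adding $|f(\mathbf{0})| \le \liminf_n \|f_n\|_v$—wait, more carefully, I would instead argue that $|f(\mathbf{0})| + \sum_{\S \in \mathcal{S}} \sum_{A \in \mathcal{P}_{\S}} |\Delta(f;A)| = \lim_n \big(|f_n(\mathbf{0})| + \sum_{\S} \sum_{A \in \mathcal{P}_{\S}} |\Delta(f_n;A)|\big) \le M$ for any finite choice of partitions, one per section, and then take the supremum over all such choices to conclude $\|f\|_v \le M$.

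The main obstacle is bookkeeping rather than depth: one must be careful that the supremum defining $V$ commutes appropriately with the limit—specifically that the inequality survives passing first to the limit in $n$ (easy, for a fixed partition) and only then to the supremum over partitions (which can only increase the left side, but each finite partition already gives a bound by $M$, so the supremum is still $\le M$). One cannot interchange the order and take $\sup_{\mathcal{P}}$ inside the limit. Everything else is a routine triangle-inequality estimate exploiting that finitely many terms are involved in each quasi-volume and each partition sum.
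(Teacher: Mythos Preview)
Your proposal is correct and follows essentially the same approach as the paper: for the variation bound you both exploit that each partition sum is a finite linear combination of point evaluations and hence stable under uniform convergence (the paper phrases this by contradiction, you phrase it directly, which is cosmetic). The only difference is that for the càdlàg property the paper simply cites \cite{neuhaus1971weak}, whereas you spell out the standard Cauchy/triangle-inequality argument; your version is self-contained but otherwise the same idea.
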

\begin{proof}
  \cite{neuhaus1971weak} shows that the uniform limit of a sequence of càdlàg
  functions is also càdlàg. It thus only remains so be shown that
  \( \| f \|_v \leq M \). Assume for contradiction that this is not the case. We
  thus assume that \( \| f \|_v > M + \epsilon \) for some \( \epsilon>0 \),
  which by definition means that there must exist finite partitions
  \( \mathcal{P}_{\S} \) of all faces \( (\mathbf{0}_{\S}, \mathbf{1}_{\S}] \),
  \( \emptyset \not =\S \subset [d] \) such that
  \begin{equation*}
    \sum_{\S \in \mathcal{S}} \sum_{A \in \mathcal{P}_s} |\Delta(f; A)| > M +
    \epsilon,
    \quad \text{where} \quad 
    \mathcal{S} = \{ s \subset [d] : s \not = \emptyset\} 
  \end{equation*}
  The sum above is made up of
  \( \kappa = \sum_{s} |\mathcal{P}_s|2^{|s|}< \infty \) number of terms on the
  form \( \pm f(\mathbf{x}) \) for some \( \mathbf{x} \in [0,1]^d \). By
  assumption we can find \( n_0 \in \N \) such that
  \( \|f_n-f\|_{\infty}< \epsilon/\kappa \) for all \( n \geq n_0 \), and thus
  \begin{equation*}
    M < \sum_{\S \in \mathcal{S}} \sum_{A \in \mathcal{P}_{\S}} |\Delta(f_n; A)| \leq \| f_n \|_v,
    \quad \forall n > n_0.
  \end{equation*}
  This contradicts the fact that \( f_n \in \D_M \) for all \( n \in \N \), so
  we must have \( \| f \|_v \leq M \).
\end{proof}

\begin{lemma}
  \label{lemma:DKF-argument}
  Let \( f \) be a function that is right-continuous in each of its coordinates
  with \( \| f \|_v \leq M \). There exists a sequence
  \( \{f_n\} \subset \mathcal{R}^d_M \) such that
  \( \Vert f- f_n \Vert_{\infty} \rightarrow 0\) for \( n \rightarrow \infty\).
\end{lemma}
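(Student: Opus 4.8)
The plan is to approximate $f$ by a sequence of step functions built from a refining sequence of grids, exploiting the one-to-one correspondence between functions in $\D_M$ and finite signed measures given by Proposition~\ref{prop:cadlag-measure}, together with right-continuity of $f$. Since $\|f\|_v \le M$ and $f$ is right-continuous in each coordinate, Proposition~\ref{prop:cadlag-measure} (or rather the underlying construction, since strictly we only stated it for càdlàg $f$, but the signed-measure correspondence for right-continuous bounded-variation $f$ is classical — cf.\ \cite{ChristophAistleitner2015}) gives a unique signed measure $\mu_f$ with $f(\mathbf{x}) = \mu_f([\mathbf{0},\mathbf{x}])$ and $\|\mu_f\|_{\mathrm{TV}} = \|f\|_v \le M$. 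The idea is then to discretize $\mu_f$: for each $n$, partition $[0,1]$ into the dyadic grid $0 = t_0^n < t_1^n < \dots < t_{2^n}^n = 1$ with $t_j^n = j 2^{-n}$, form the product grid on $[0,1]^d$, and define $\mu_n$ to be the discrete measure that places, at each grid point $\mathbf{t}$, the mass that $\mu_f$ assigns to the half-open box whose upper-right corner is $\mathbf{t}$ (the cell of the grid immediately to the ``lower-left'' of $\mathbf{t}$). Then set $f_n(\mathbf{x}) = \mu_n([\mathbf{0}, \mathbf{x}])$.

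The key steps are as follows. First, verify $f_n \in \mathcal{R}^d_M$: by construction $\mu_n$ is a finite linear combination of Dirac masses at grid points, so $f_n$ is a finite linear combination of indicators $\1_{[\mathbf{t},\mathbf{1}]}$, hence lies in $\mathrm{Span}(\mathcal{F}^d)$; and since $\mu_n$ is obtained by ``lumping'' the mass of $\mu_f$ over disjoint boxes, $\|\mu_n\|_{\mathrm{TV}} \le \|\mu_f\|_{\mathrm{TV}} \le M$, which by equation~(\ref{eq:8})-type reasoning gives $\|f_n\|_v \le M$, so $f_n \in \mathcal{R}^d_M$. Second, identify $f_n$ concretely: one checks that $f_n(\mathbf{x}) = \mu_f([\mathbf{0}, \lfloor \mathbf{x} \rfloor_n])$ where $\lfloor \mathbf{x} \rfloor_n$ is the coordinatewise ``round down to the grid'' operation — i.e., $f_n$ is exactly $f$ precomposed with the grid-flooring map. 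Third, prove uniform convergence: $|f(\mathbf{x}) - f_n(\mathbf{x})| = |f(\mathbf{x}) - f(\lfloor\mathbf{x}\rfloor_n)|$, and since $\lfloor\mathbf{x}\rfloor_n \to \mathbf{x}$ from the ``lower-left'' (i.e., from within the quadrant $Q_{\mathbf{0}}$ shifted appropriately — actually each coordinate approaches from below or equals), we need this difference to go to zero uniformly in $\mathbf{x}$.

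The main obstacle is precisely this last point: pointwise right-continuity (continuity from the ``northeast'' in the language of the paper, together with one-sided limits) does not automatically upgrade to uniform continuity on the compact cube, because the relevant modulus is one-sided and the function genuinely has jumps. The standard way around this — the ``DKF argument'' alluded to in the lemma's name, after Dini/Dvoretzky-Kiefer-Wolfowitz-style reasoning for monotone or bounded-variation functions — is to use that $f$ has bounded variation: write $f$ (on each section) as a difference of two monotone-type functions via the Jordan decomposition of $\mu_f$, so it suffices to treat the case where $\mu_f \ge 0$, i.e., $f$ is ``coordinatewise nondecreasing'' in the quasi-volume sense. For such $f$, right-continuity plus monotonicity plus compactness does give uniform control: the total oscillation is bounded by $M$, and one covers $[0,1]^d$ by finitely many boxes on which the oscillation of $f$ is less than any prescribed $\epsilon$ (using right-continuity to shrink boxes around each point and compactness to extract a finite subcover), then chooses $n$ large enough that the dyadic grid refines this finite cover. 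Summing the at most $2^d - 1$ sectional contributions (plus the value at $\mathbf{0}$, which $f_n$ matches exactly) yields $\|f - f_n\|_\infty \to 0$. I would present the monotone reduction and the finite-subcover argument as the heart of the proof, relegating the bookkeeping about $\|f_n\|_v \le M$ and the identification of $f_n$ to brief remarks.
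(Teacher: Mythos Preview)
Your construction has a genuine gap at the uniform-convergence step. Take $d=1$ and $f=\1_{[1/3,1]}$, which is right-continuous with $\|f\|_v=1$ and $\mu_f=\delta_{1/3}$. Your dyadic lumping places the unit mass at the smallest dyadic point $t_j\ge 1/3$, so $f_n=\1_{[t_j,1]}$; equivalently, your identification $f_n(\mathbf{x})=f(\lfloor\mathbf{x}\rfloor_n)$ gives $f_n(1/3)=f(t_{j-1})=0$ while $f(1/3)=1$. Hence $\|f-f_n\|_\infty=1$ for every $n$. The proposed repair via a finite cover by boxes of small oscillation cannot work either: near any jump point of $f$ the oscillation on every neighborhood is bounded below by the jump size, so no such cover exists. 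Reducing to the monotone case $\mu_f\ge 0$ does not help, since $\1_{[1/3,1]}$ is already a CDF. The underlying issue is that $\lfloor\mathbf{x}\rfloor_n$ approaches $\mathbf{x}$ from the \emph{southwest}, whereas $f$ is only continuous from the \emph{northeast}; no fixed deterministic grid can resolve jumps at non-grid locations.

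The paper's proof takes a different route that sidesteps this obstacle. The abbreviation ``DKF'' refers to Dvoretzky--Kiefer--Wolfowitz: after the Jordan decomposition $\mu_f=\alpha P^+-\beta P^-$ into mutually singular probability measures, one draws i.i.d.\ samples from $P^+$ and $P^-$ and uses the multivariate DKW inequality (plus Borel--Cantelli) to get almost-sure uniform convergence of the empirical CDFs to $F^+$ and $F^-$; a deterministic sequence is then extracted. The key advantage is that the random support points automatically fall near the mass of $\mu_f$ (in particular at atoms), so jump locations are captured exactly, whereas your dyadic grid is oblivious to where the mass sits. If you want to salvage a deterministic construction, you would need to choose the support points adaptively---for instance at quantile-type locations of $P^+$ and $P^-$---rather than on a fixed dyadic grid.
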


\begin{proof}
  By Theorem~3~(a) in \citep{ChristophAistleitner2015} there exists a unique,
  finite signed measure \( \mu_f \) such that
  \( f(\mathbf{x})=\mu([\mathbf{0},\mathbf{x}]) \). By the Jordan-Hahn
  decomposition theorem we may write \( \mu_f = \alpha P^+ - \beta P^- \), where
  \( P^+ \) and \( P^- \) are uniquely determined probability measures with
  \( P^+ \perp P^- \), and \( \alpha, \beta \in [0,\infty) \). Letting \( F^+ \)
  and \( F^- \) denote the associated cumulative distribution functions, we have
  that \( f = \alpha F^+ - \beta F^- \). By Theorem~3~(a) in
  \citep{ChristophAistleitner2015} and because \( P^+ \perp P^- \) we have
  \begin{equation}
    \label{eq:18}
    M \geq \| f \|_v = \| \mu_f \|_{\mathrm{TV}} =
    \alpha \| P^+ \|_{\mathrm{TV}} + \beta \| P^- \|_{\mathrm{TV}}
    =
    \alpha + \beta.
  \end{equation}
  Let \( P_n^+ \) and \( P_n^- \) denote the empirical measures obtained from
  i.i.d.\ samples from \( P^+ \) and \( P^- \), respectively. Let \( F_n^+ \)
  and \( F_n^- \) denote the associated empirical distribution functions, and
  define \( F_n = \alpha F_n^+ - \beta F_n^- \). As \( P_n^+ \perp P_n^- \)
  almost surely we have
  \begin{equation*}
    \| F_n \|_v
    =
    \alpha \| P^+_n \|_{\mathrm{TV}} + \beta \| P_n^- \|_{\mathrm{TV}}
    =
    \alpha + \beta
    \quad \text{a.s.}    
  \end{equation*}
  The multivariate version of the Dvoretzky-Kiefer-Wolfowitz theorem
  \citep{dvoretzky1956asymptotic,naaman2021tight} and the Borel-Cantelli lemma
  imply that \( \| F_n^+ -F^+\|_{\infty} \rightarrow 0 \) and
  \( \| F_n^- -F^-\|_{\infty} \rightarrow 0 \) almost surely. Hence there must
  exist deterministic sequences of discrete measures \( p^+_n \) and \( p^-_n \)
  with associated cumulative distribution functions \( f_n^+ \) and \( f_n^- \)
  such that
  \begin{equation}
    \label{eq:17}
    p_n^+ \perp p_n^-,
    \quad \forall n \in \N,
  \end{equation}
  and
  \begin{equation}
    \label{eq:16}
    \|f_n^+ -F^+\|_{\infty} \longrightarrow 0
    \quad \text{and} \quad 
    \|f_n^- -F^-\|_{\infty} \longrightarrow 0.
  \end{equation}
  Note that \( f_n^+ \) is a linear combination of the indicator functions
  \( \{\1_{[\mathbf{x}_i, \mathbf{1}]}\}_{i=1}^n \), where
  \( \{\mathbf{x}_i\}_{i=1}^n \) are the support points of the discrete measure
  \( p_n^+ \), and similarly for \( f_n^- \). Hence, with
  \( f_n = \alpha f_n^+ - \beta f_n^-\), we have that
  \( f_n \in \mathrm{Span}(\mathcal{F}^d) \). By equations~(\ref{eq:18})
  and~(\ref{eq:17}),
  \begin{equation*}
    \| f_n \|_v = \alpha + \beta \leq M,
  \end{equation*}
  so \( f_n \in \mathcal{R}_M^d \) for all \( n \in \N \).
  Equation~(\ref{eq:16}) gives that \( \| f_n - f\|_{\infty} \rightarrow 0\)
  which concludes the proof.
\end{proof}

\begin{proof}[Proof of Proposition~\ref{prop:discrete-measure-closure}]
  For \( f_1, f_2 \in \D \) and \( \alpha, \beta \in \R \) the function
  \( f = \alpha f_1 + \beta f_2 \) is càdlàg, so $\mathcal{R}_M^d \subset \D_M$.
  It thus follows from Lemma~\ref{lemma:unif-limit} that
  \( \overline{\mathcal{R}_M^d} \subset \D_M \). As any \( f \in \D_M\) is
  right-continuous in each of its coordinates, the reverse inclusion follows
  from Lemma~\ref{lemma:DKF-argument}.
\end{proof}

\begin{proof}[Proof of Proposition~\ref{prop:cadlag-measure}]
  Any function \( f \in \D_M \) is by definition right-continuous in each of its
  arguments so the first statement follows immediately from Theorem~3~(a) in
  \citep{ChristophAistleitner2015}. For the second statement, we know by
  Theorem~3~(b) in \citep{ChristophAistleitner2015} that there exists a
  right-continuous function \( f_{\mu} \) with \( \|f_{\mu}\|_v = M < \infty \)
  such that \( f_{\mu}(\mathbf{x}) = \mu([\mathbf{0},\mathbf{x}]) \). By
  Lemma~\ref{lemma:DKF-argument}, \( f_{\mu} \) can be approximated uniformly by
  a sequence of functions \( f_n \in \D_M \). Lemma~\ref{lemma:unif-limit} then
  implies that \( f_{\mu} \in \D_M \).
\end{proof}

\begin{proof}[Proof of Proposition~\ref{prop:cadlag-repr}]
  For the first statement we use that we can partition a box
  \([\mathbf{0},\mathbf{x}]\) into `half-closed' lower dimensional faces with
  corners at \( \mathbf{0} \), the point \( \mathbf{0} \), and the remaining
  `half-closed interior' of the box, i.e.,
  \begin{equation*}
    [\mathbf{0}, \mathbf{x}]
    = \{ \mathbf{0} \} \cup
    \Big(
    \bigcup_{\S \in \mathcal{S}} A(\mathbf{x}; s),
    \Big),
    \quad \text{for} \quad
    A(\mathbf{x}; s) = A_1(\mathbf{x}; s) \times \cdots \times A_d(\mathbf{x};
    s),
  \end{equation*}
  where
  \begin{equation*}
    \mathcal{S} =
    \left\{
      s \subset \{1, \dots, d\} : s\not= \emptyset
    \right\},
    \quad \text{and} \quad
        A_i(\mathbf{x}; s) = 
    \begin{cases}
      (0, x_i] & \text{if } i \in s \\
      \{0\} & \text{if } i \not\in s \\
    \end{cases},
  \end{equation*}
  and we define \( (0, 0 ] = \emptyset \).  
  Using this and Proposition~\ref{prop:cadlag-measure} we can write
  \begin{equation}
    \label{eq:4}
    f(\mathbf{x}) = \mu_f([\mathbf{0}, \mathbf{x}])
    = \mu_f(\{ \mathbf{0} \})
    + \sum_{\S \in \mathcal{S}} \mu_f(A(\mathbf{x}; s))
  \end{equation}    
  Any section \(f_s\) of \(f\) is also a càdlàg function with bounded sectional
  variation norm and hence generates a measure on the cube \([0,1]^{|s|}\)
  through the relation
  \begin{equation}
    \label{eq:2}
    f_s(\mathbf{x}) = \mu_{f_s}([\mathbf{0}_{\S}, \mathbf{x}]),
    \quad \text{for all} \quad \mathbf{x} \in [0,1]^{|s|}.
  \end{equation}
  By definition of the section \(f_s\) it follows that the measure assigned to a
  box in \([0,1]^{|s|}\) by \(\mu_{f_s}\) is the same as the measure assigned by
  \(\mu_f\) when this space is considered as a subspace of \([0,1]^d\), i.e.,
  \begin{equation*}
    \mu_{f_s}([\mathbf{0}_{\S}, \mathbf{x}_{\S}]) = \mu_{f}([\mathbf{0}, \overline{\mathbf{x}}_{\S}]), \quad \text{for} \quad \mathbf{x} \in [0,1]^{d}.
  \end{equation*}
  By the uniqueness of the measures generated by \( f \) and each \( f_s \) it
  follows that
    \begin{equation}
    \label{eq:3}
    \mu_{f}(A(\mathbf{x}; \S)) = \mu_{f_s}((\mathbf{0}_{\S}, \mathbf{x}_{\S}]).
  \end{equation}
  By equations~(\ref{eq:4})~and~(\ref{eq:3}) we then have
  \begin{align*}
    f(\mathbf{x})
    = f(\mathbf{0})
    + \sum_{\S \in \mathcal{S}} \mu_{f_s}((\mathbf{0}_s, \mathbf{x}_s])
    = f(\mathbf{0})
    + \sum_{\S \in \mathcal{S}} \int_{(\mathbf{0}_s, \mathbf{x}_s]} \diff f_s.
  \end{align*}
  The second statement follows because \( A(\mathbf{1}; \S) \), for
  \( \S \in \mathcal{S} \), are disjoint sets, so the measures
  \( \1_{A(\mathbf{1}; \S)}\cdot\mu_f \) are mutually singular. Hence,
  \begin{align*}
    \| \mu_f \|_{\mathrm{TV}}
    & =
      \left\| \1_{\{\mathbf{0}\}}\cdot \mu_f + \sum_{\S \in \mathcal{S}}
      \1_{A(\mathbf{1}; \S)}\cdot\mu_f \right\|_{\mathrm{TV}}
    \\
    & =
      \left\| \1_{\{\mathbf{0}\}}\cdot \mu_f \right\|_{\mathrm{TV}} +
      \sum_{\S \in \mathcal{S}}
      \left\|
      \1_{A(\mathbf{1}; \S)}\cdot\mu_f
      \right\|_{\mathrm{TV}} 
    \\
    & = \int_{\{\mathbf{0}\}}  \diff
      |\mu_f|
      +
      \sum_{\S \in \mathcal{S}} \int_{A(\mathbf{1}; \S)}  \diff
      |\mu_f|
    \\
    & = | f(\mathbf{0})| +      
      \int_{(\mathbf{0}_{\S}, \mathbf{1}_{\S}]}  | \diff f_s|. 
  \end{align*}
\end{proof}

\begin{proof}[Proof of Proposition~\ref{prop:pw-cons-cadlag}]
  Let \( B_{r}(\mathbf{x}) \) be the ball around the point
  \( \mathbf{x} \in [0,1]^d \) with radius \( r>0 \). For a function
  \( f : [0,1]^d \rightarrow \mathcal{K} \subset \R \) with \( \mathcal{K} \)
  finite, we now claim that
  \begin{equation}
    \label{eq:36}\tag{\( * \)}    
    \forall \mathbf{x} \in [0,1]^d, \forall \mathbf{a}  \in
    \{0,1\}^d, \exists r >0, \forall z,y  \in B_{r}(\mathbf{x}) \cap
    Q_{\mathbf{a}}(\mathbf{x}) : f(z)=f(y),
  \end{equation}
  implies \( f \in \mathcal{R}^d_M \). To see this, assume that (\ref{eq:36})
  holds. Define the covering
  \begin{equation*}
    \mathcal{B} =  \{B(\mathbf{x}) : \mathbf{x} \in [0,1]^d\} ,
  \end{equation*}
  where \( B(\mathbf{x}) \) is an open ball around \( \mathbf{x} \) such that
  for any \( \mathbf{a} \in \{0,1\}^d \), \( f \) is constant on
  \( B_{r_{\mathbf{x}}}(\mathbf{x}) \cap Q_{\mathbf{a}}(\mathbf{x}) \) for some
  \( r_{\mathbf{x}} > 0 \). Such an
  open ball exists around any \( \mathbf{x} \) by (\ref{eq:36}). As
  \( [0,1]^d \) is compact there exists a finite subset
  \( \{B(\mathbf{x}^1), \dots, B(\mathbf{x}^J)\} \subset \mathcal{B} \) that
  covers \( [0,1]^d \). Consider now any box of the form
  \begin{align*}
    I(\mathbf{j}) = I_1(j_1) \times \cdots I_d(j_d),
    \quad \text{where} \quad
    I_i(j) = 
    \begin{cases}
      [0, x^{1}_i) & \text{if } j=1,\\
      [x^j_i, x^{j+1}_i) & \text{if } 0<j<J,\\
      [x^J_i 1] & \text{if } j=J,
    \end{cases}    
  \end{align*}
  for all unique sequences
  \(\mathbf{j}= (j_1, \dots, j_d)\in \{1, \dots, J\}^{d} \). These boxes
  partition \( [0,1]^d \), and by construction of the covering
  \( \{B(\mathbf{x}^1), \dots, B(\mathbf{x}^J)\} \), \( f \) is constant on
  \( B(\mathbf{x}^j) \cap I(\mathbf{j}) \) for all \( j \) and $\mathbf{j}$. As
  any \( I(\mathbf{j}) \) is connected and
  \( \{B(\mathbf{x}^1), \dots, B(\mathbf{x}^J)\} \) is an open cover, it follows
  that \( f \) is constant on each \( I(\mathbf{j}) \). Hence
  \( f \in \mathcal{R}_M^d \), and thus we have proved the initial claim. The
  proposition now follows by noting that this implies that if
  \( f \not \in \mathcal{R}_M^d \), then (\ref{eq:36}) is false, i.e.,
  \begin{equation*}
    \exists \mathbf{x} \in [0,1]^d, \exists \mathbf{a}  \in
    \{0,1\}^d, \forall r >0, \exists z,y  \in B_{r}(\mathbf{x}) \cap
    Q_{\mathbf{a}}(\mathbf{x}) : f(z)\not=f(y).
  \end{equation*}
  Thus, if \( f \not \in \mathcal{R}_M^d \), we can find a point
  \( \mathbf{x} \in [0,1]^d \), a vertex \( \mathbf{a} \in \{0,1\}^d \) and a
  sequence \( r_n \searrow 0\) such that for all \( n \in \N \), \( f \) is not
  constant on \( B_{r_n}(\mathbf{x}) \cap Q_{\mathbf{a}}(\mathbf{x}) \). This in
  turn implies that we can find a sequence
  \( \{\mathbf{x}_n \} \in B_{r_n}(\mathbf{x}) \cap Q_{\mathbf{a}}(\mathbf{x})
  \) such that \( f(\mathbf{x}_{n}) \not = f(\mathbf{x}_{n-1}) \). Clearly,
  \( \mathbf{x}_n \in Q_{\mathbf{a}}(\mathbf{x}) \) and
  \( \mathbf{x}_n \rightarrow \mathbf{x} \), but as
  \( f(\mathbf{x}) \in \mathcal{K}\) for all \( \mathbf{x} \in [0,1]^d \),
  \( f(\mathbf{x}_{n}) \) cannot converge. Hence \( f \) is not càdlàg.
\end{proof}

\section{Results from empirircal process theory}
\label{sec:results-from-empir}

Recall the notation
\( \mathcal{L}_{M} = \left\{ L(f, \blank) : f \in \D_M \right\} \) for a loss
function \(L \colon \D_M \times \mathcal{O} \rightarrow \R_+\), and the
Assumption~\ref{assum:loss-assum}~\ref{item:5}, which we restate her for
convenience:
\begin{equation}  
  \label{eq:recal-ass}\tag{B}
  \exists C < \infty , \eta >0, \kappa \in \N 
  :
  N_{[\,]}(\epsilon, \mathcal{L}_M, \| \blank \|_{P}) \leq C
  N_{[\,]}(\epsilon/C, \D_M, \| \blank \|_{\leb} )^{\kappa},
  \quad \forall  \epsilon\in(0,\eta).
\end{equation}
Define
\begin{equation}
  \label{eq:29}    
  \Gamma_n(\delta) =  \sup_{\Vert f - \f \Vert_{\leb} < \delta}
  \left\vert \mathbb{G}_{n}{[L(f, \blank) - L(\f, \blank)]} \right\vert
  \quad \text{with} \quad f \in \D_M,
\end{equation}
where \( \mathbb{G}_n = \sqrt{n}(\empmeas- P)\) is the empirical process.
\begin{lemma}
  \label{lemma:bound-emp-brack}
  If (\ref{eq:recal-ass}) holds and \( \Vert L(f, \blank) \Vert_{\infty} <C \),
  then there exists an $\eta>0$ such that for for all \( n \in \N \) and
  $\delta \in (0, \eta)$,
  \begin{equation*}
    \E^*_P{\left[ \Gamma_n(\delta) \right]}     \lesssim \delta^{1/2}
    |\log(\delta)|^{d-1}
    + \frac{|\log(\delta)|^{2(d-1)} }{\delta  \sqrt{n}}.
  \end{equation*}
  In particular, when \( r_n = n^{1/3}\log(n)^{-2(d-1)/3}\) we have
  \begin{equation*}
    n^{-1/2}\E^*_P{\left[ \Gamma_n(r_n ) \right]} =  \bigO{(r_n^{-2})}.
  \end{equation*}
\end{lemma}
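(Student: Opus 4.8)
The plan is to bound $\E^*_P[\Gamma_n(\delta)]$ by a standard maximal inequality in terms of a bracketing entropy integral, and then to control that integral using the known bracketing numbers of $\D_M$. First I would invoke a classical maximal inequality for empirical processes indexed by a class of functions with an envelope bound and a finite bracketing integral (e.g. Lemma~3.4.2 in \cite{van1996weak}), applied to the class $\mathcal{G}_\delta = \{L(f,\blank) - L(\f,\blank) : f \in \D_M,\ \Vert f - \f\Vert_{\leb} < \delta\}$. By Assumption~\ref{assum:loss-assum}~\ref{item:1} this class has a uniformly bounded envelope (of order the constant $C$), and by Assumption~\ref{assum:loss-assum}~\ref{item:2} any element has $\Vert \blank \Vert_P^2 \lesssim \Vert f - \f\Vert_{\leb}^2 < \delta^2$, so the relevant ``radius'' in the maximal inequality is of order $\delta$. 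The maximal inequality then gives
\begin{equation*}
  \E^*_P[\Gamma_n(\delta)] \lesssim J_{[\,]}(\delta, \mathcal{G}_\delta, \Vert\blank\Vert_P) \left( 1 + \frac{J_{[\,]}(\delta, \mathcal{G}_\delta, \Vert\blank\Vert_P)}{\delta^2 \sqrt{n}} \right),
\end{equation*}
where $J_{[\,]}(\delta,\cdot,\cdot) = \int_0^\delta \sqrt{1 + \log N_{[\,]}(\epsilon, \cdot, \Vert\blank\Vert_P)}\,\diff\epsilon$ is the bracketing integral.

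Next I would estimate the bracketing integral. Using Assumption~\ref{assum:loss-assum}~\ref{item:5} to pass from brackets on $\mathcal{L}_M$ (hence on $\mathcal{G}_\delta \subset \mathcal{L}_M - L(\f,\blank)$, up to constants) to brackets on $\D_M$, we get $\log N_{[\,]}(\epsilon, \mathcal{G}_\delta, \Vert\blank\Vert_P) \lesssim \log C + \kappa \log N_{[\,]}(\epsilon/C, \D_M, \Vert\blank\Vert_{\leb})$. The key input is the known entropy bound for $\D_M$: by the results cited (bracketing/covering numbers for multivariate càdlàg functions with bounded sectional variation norm, cf. \cite{bibaut2019fast,van2023higher,fang2021multivariate}), $\log N_{[\,]}(\epsilon, \D_M, \Vert\blank\Vert_{\leb}) \lesssim \epsilon^{-1} |\log \epsilon|^{2(d-1)}$. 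Substituting, $\sqrt{\log N_{[\,]}(\epsilon, \mathcal{G}_\delta, \Vert\blank\Vert_P)} \lesssim \epsilon^{-1/2} |\log\epsilon|^{d-1}$, so
\begin{equation*}
  J_{[\,]}(\delta, \mathcal{G}_\delta, \Vert\blank\Vert_P) \lesssim \int_0^\delta \epsilon^{-1/2} |\log\epsilon|^{d-1} \,\diff\epsilon \lesssim \delta^{1/2} |\log\delta|^{d-1},
\end{equation*}
the last step by a routine integration-by-parts argument (the $|\log\epsilon|^{d-1}$ factor is slowly varying so it comes out of the integral up to constants, for $\delta$ small). Plugging this into the maximal inequality yields
\begin{equation*}
  \E^*_P[\Gamma_n(\delta)] \lesssim \delta^{1/2}|\log\delta|^{d-1} + \frac{\delta|\log\delta|^{2(d-1)}}{\delta^2\sqrt{n}} = \delta^{1/2}|\log\delta|^{d-1} + \frac{|\log\delta|^{2(d-1)}}{\delta\sqrt{n}},
\end{equation*}
which is the claimed bound.

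For the ``in particular'' statement, I would simply substitute $\delta = r_n = n^{1/3}\log(n)^{-2(d-1)/3}$ — wait, that is not a vanishing sequence, so one must be slightly careful: the intended reading is that we evaluate the bound at the rate sequence and divide by $\sqrt n$. With $r_n^{-1} = n^{-1/3}\log(n)^{2(d-1)/3}$ one checks $r_n^{1/2}|\log r_n|^{d-1} \asymp n^{1/6}\log(n)^{-(d-1)/3}\cdot\log(n)^{d-1} = n^{1/6}\log(n)^{2(d-1)/3}$ and $|\log r_n|^{2(d-1)}/(r_n\sqrt n) \asymp \log(n)^{2(d-1)}\cdot n^{-1/3}\log(n)^{2(d-1)/3}\cdot n^{-1/2}$, and after multiplying the whole bound by $n^{-1/2}$ both terms are seen to be $\bigO(r_n^{-2}) = \bigO(n^{-2/3}\log(n)^{4(d-1)/3})$ by direct comparison of exponents of $n$ and $\log n$; this is just bookkeeping with the logarithmic factors.

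The main obstacle I anticipate is twofold: first, getting the entropy integral bound with the \emph{correct} power of the logarithm — one must use the sharp form $\epsilon^{-1}|\log\epsilon|^{2(d-1)}$ of the entropy of $\D_M$ rather than a cruder bound, and track how the square root and the integration interact with the $|\log\epsilon|^{d-1}$ factor; an integration-by-parts or a substitution $\epsilon = e^{-t}$ makes this clean but requires care for it not to lose a logarithmic power. Second, one must be careful that the maximal inequality is applied with the right localisation: the $L^2(P)$-diameter of $\mathcal{G}_\delta$ is controlled by $\delta$ only through Assumption~\ref{assum:loss-assum}~\ref{item:2}, and one should verify that the envelope and the ``$\delta^2\sqrt n$'' denominator in the second term of the maximal inequality are as claimed (this is where the $1/(\delta\sqrt n)$, rather than $1/\sqrt n$, in the final bound originates). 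Everything else is routine.
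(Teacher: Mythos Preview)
Your proposal follows the paper's proof essentially line for line: invoke Lemma~3.4.2 of \cite{van1996weak}, pass to the entropy of $\D_M$ via (\ref{eq:recal-ass}), insert the bound $\log N_{[\,]}(\epsilon,\D_M,\|\blank\|_{\leb}) \lesssim \epsilon^{-1}|\log\epsilon|^{2(d-1)}$ from \cite{bibaut2019fast}, and integrate by parts to get $J_{[\,]}(\delta) \lesssim \delta^{1/2}|\log\delta|^{d-1}$. Two minor caveats. First, Assumption~\ref{assum:loss-assum}~\ref{item:2} bounds the \emph{risk excess} $P[L(f,\blank)-L(\f,\blank)]$, not the second moment $\|L(f,\blank)-L(\f,\blank)\|_P^2$, so it does not by itself give the $L^2(P)$-radius required by the maximal inequality; the paper is equally silent on this point, so you are not worse off, but the justification you cite is the wrong one. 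Second, your hesitation at the ``in particular'' is well founded: the paper's own proof substitutes $\delta = r_n^{-1}$ (a vanishing sequence), not $\delta = r_n$, and then verifies $n^{-1/2}r_n^2\,\E^*[\Gamma_n(r_n^{-1})] \lesssim 1$ by direct computation---the lemma statement as written appears to contain a typo.
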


\begin{proof}
  Define the entropy integral
  \begin{equation*}
    J_{[\,]}(\delta, \mathcal{H}, \Vert \blank \Vert) = \int_0^{\delta}
    \sqrt{1 + \log N_{[\,]}(\epsilon,\mathcal{H}, \Vert \blank \Vert)} \diff \epsilon.
  \end{equation*}
  Lemma 3.4.2 in \cite{van1996weak} provides the bound
  \begin{equation}
    \label{eq:12}
    \E^*_P{\left[ \Gamma_n(\delta)  \right]}
    \lesssim
    J_{[\,]}(\delta, \mathcal{L}_M, \Vert \blank \Vert_P)
    \left(
      1 + \frac{J_{[\,]}(\delta, \mathcal{L}_M, \Vert \blank \Vert_P)}{\delta^2 \sqrt{n}}C
    \right).
  \end{equation}
  \cite{bibaut2019fast} established that
  \begin{equation*}
    \log N_{[\,]}(\epsilon, \D_M, \Vert \blank \Vert_{\leb}) \lesssim
    \epsilon^{-1}\vert\log(\epsilon/M)\vert^{2(d-1)},
  \end{equation*}
  for $\epsilon \in (0,1)$, and so we have by assumption
  \begin{equation}
    \label{eq:28}
    \log{N_{[\,]}(\epsilon, \mathcal{L}_M, \| \blank \|_{P})} \leq \log{C}
    +
    \kappa\log{N_{[\,]}(\epsilon/C, \D_M, \| \blank \|_{\leb} )}
    \lesssim
    \epsilon^{-1}|\log
    {\left(
        \epsilon
      \right)}|^{2(d-1)},
  \end{equation}
  for small enough $\epsilon$. Using integration by parts we have
  \begin{align*}
    \int_0^{\delta} \sqrt{\epsilon^{-1}
    |\log
    { 
    \epsilon
    }|
    ^{2(d-1)}} \diff \epsilon
    & = (-1)^{d-1}
      \int_0^{\delta} \epsilon^{-1/2}
      {\left( \log
      \epsilon
      \right)}^{d-1} \diff \epsilon
    \\
    & =  (-1)^{d-1}
      \left(
      \delta^{1/2}
      {\left( \log
      \delta
      \right)}^{d-1} -     
      (d-1)\int_0^{\delta} \epsilon^{1/2}
      {\left( \log
      \epsilon
      \right)}^{d-2} \epsilon^{-1} \diff \epsilon
      \right)
    \\
    & =  
      \delta^{1/2}
      |\log
      \delta|
      ^{d-1} +     
      (d-1)\int_0^{\delta} \epsilon^{-1/2}
      |\log
      \epsilon |^{d-2} \diff \epsilon.
  \end{align*}
  As the second term on the right vanishes for $\delta \rightarrow 0$, we can
  use this and equation~(\ref{eq:28}) to obtain
  \begin{equation*}
    J_{[\,]}(\delta, \mathcal{L}_M, \Vert \blank \Vert_P)
    \lesssim
    \delta^{1/2}
    |\log
    \delta|
    ^{d-1},
  \end{equation*}
  and so equation~(\ref{eq:12}) gives
  \begin{equation*}
    \E^*_P{\left[ \Gamma_n(\delta)  \right]}
    \lesssim
    \delta^{1/2}
    |\log
    \delta|
    ^{d-1}
    \left(
      1 + \frac{\delta^{1/2}
        |\log
        \delta|
        ^{d-1}}{\delta^2 \sqrt{n}} M
    \right)
    \lesssim
    \delta^{1/2}
    |\log
    \delta|
    ^{d-1}
    +
    \frac{|\log
      \delta|
      ^{2(d-1)}}{\delta \sqrt{n}},
  \end{equation*}
  which was the first statement of the lemma. For the second statement, set
  $\delta=r_n^{-1}$ and obtain for all $n \geq 3$,
  \begin{equation*}
    \begin{split}
      n^{-1/2}r_n^2\E\left[ \Gamma_n(r_n^{-1})
      \right]
      & \lesssim
        n^{-1/2} r_n^2
        \left(
        r_n^{-1/2}
        |\log(r_n)|^{d-1}
        + \frac{r_n |\log(r_n)|^{2(d-1)}}{ \sqrt{n}}
        \right)      
      \\
      & \leq
        n^{-1/2}
        r_n^2
        \left(
        r_n^{-1/2}
        |\log(n)|^{d-1}
        + \frac{r_n |\log(n)|^{2(d-1)}}{ \sqrt{n}}
        \right)
      \\
      & =
        n^{-1/2}
        r_n^2
        \left(
        n^{-1/6} |\log(n)|^{4(d-1)/3}
        + \frac{n^{1/3} |\log(n)|^{4(d-1)/3}}{ \sqrt{n}}
        \right)
      \\
      & =
        n^{-1/2} r_n^2
        \left(
        n^{-1/6} |\log(n)|^{4(d-1)/3}
        + n^{-1/6} |\log(n)|^{4(d-1)/3}
        \right)
      \\
      & =
        n^{1/6} |\log(n)|^{-4(d-1)/3}
        2
        n^{-1/6} |\log(n)|^{4(d-1)/3}
      \\
      & =
        2
    \end{split}
  \end{equation*}
\end{proof}

\begin{proof}[Proof of Theorem~\ref{theorem:main-result}]
  We apply theorem 3.4.1 from \cite{van1996weak} to a HAL estimator
  \( \hat{f}_n \). This yields that \( \hat{f}_n \) converges to
  \( \hat{\pi}_n(\f) \) at rate \( r_n \) if there exist numbers
  $0 \leq \delta_n < \eta$ such that the following conditions hold for all
  $n \in \N$ and $\delta \in (\delta_n, \eta)$. %
  \newcommand{\C}{\text{(C1)}} %
  \newcommand{\CC}{\text{(C2)}} %
  \newcommand{\CCC}{\text{(C3)}} %
  \newcommand{\CCCC}{\text{(C4)}} %
  \begin{enumerate}
  \item[\C] Define for \( 0<a<b \) the hollow sphere
    \( B_{(a, b)}(f_0) = \{ f \in \D_M : a < \| f - f_0 \| < b\} \). It holds
    that
    \begin{equation*}
      \inf_{f \in B_{(\delta/2, \delta)}(\hat{\pi}_n(\f))} P{[ L(f, \blank) - L(\hat{\pi}_n(f), \blank)]} \gtrsim \delta^2.
    \end{equation*}
  \item[\CC] There exists a function
    $\phi_n\colon(\delta_n, \eta) \rightarrow \R$ such that
    \(\delta \mapsto \phi_n(\delta)/\delta^{\alpha}\) is decreasing for some
    $\alpha < 2$ and
    \begin{equation*}
      \E^*{\left[ \Gamma_n(\delta) \right]} \lesssim \phi_n(\delta) 
      \quad \text{and} \quad 
      n^{-1/2}\phi_n(r_n^{-1}) \leq r_n^{-2},
    \end{equation*}
    where \( \E^* \) denotes outer expectation.
  \item[\CCC]
    \( \empmeas{[L(\hat{f}_n, \blank)]} \leq \empmeas{[L(\hat{\pi}_n(\f),
      \blank)]} + \bigO_P(r_n^{-2})\).
  \item[\CCCC]
    \( \Vert \hat{f}_n - \hat{\pi}_n(\f) \Vert_{\leb} \arrow{P^*} 0 \), where
    \( P^* \) denotes outer probability.
  \end{enumerate}
  By Assumption~\ref{assum:loss-assum}~\ref{item:2} there exists
  \( C \in (1, \infty) \) such that
  \begin{equation*}
    \frac{1}{C}\| f - \f \|^2 \leq P{[L(f, \blank) - L(\f, \blank)]}
    \leq C \| f - \f \|^2.
  \end{equation*}
  Set $\delta_n = 4C^2\|\hat{\pi}_n(\f) -\f\|$. If
  $f \in B_{(\delta/2, \delta)}(\hat{\pi}_n(\f))$ we have that
  \begin{equation*}
    \delta/2 < \| f - \hat{\pi}_n(\f) \|
    \leq \| f - \f \| + \| \f -
    \hat{\pi}_n(\f) \|
    =  \| f - \f \| +
    \frac{\delta_n}{4 C^2}
    <   \| f - \f \| +
    \delta/4,
  \end{equation*}
  so
  \begin{equation}
    \label{eq:52}
    \| f - \f \| > \delta/4.
  \end{equation}
  Equation~(\ref{eq:52}) shows that
  \( B_{(\delta/2, \delta)}(\hat{\pi}_n(\f)) \subset B_{(\delta/4, \infty)}(\f)
  \), so
  \begin{equation}
    \label{eq:53}
    \inf_{f \in B_{(\delta/2, \delta)}(\hat{\pi}_n(\f))} P{[L(f, \blank) - L(\hat{\pi}_n(f), \blank)]} \geq
    \inf_{f \in B_{(\delta/4, \infty)}(\f)} P{[      
      L(f, \blank)
      - L(\hat{\pi}_n(f), \blank)]}.
  \end{equation}
  As
  \begin{equation*}
    P{[L(\hat{\pi}_n(f), \blank)- L(\f, \blank)]} \leq C \|\f - \hat{\pi}_n(f)
    \|^2
    = \frac{ C \delta_n^2}{16 C^4}
    = \frac{\delta_n^2}{16 C^3}
    < \frac{\delta^2}{16 C^3},
  \end{equation*}
  we have
  \begin{align*}
    P{[L(f, \blank)- L(\hat{\pi}_n(f), \blank)]}
    & = P{[L(f, \blank)- L(\f, \blank)]}
      - P{[L(\hat{\pi}_n(f), \blank)- L(\f, \blank)]}.
    \\
    &  > P{[L(\f, \blank) - L(f, \blank)]} - \frac{\delta^2}{16 C^3}
    \\
    &  > \frac{1}{C}\| \f - f \|^2 - \frac{\delta^2}{16 C^3},
  \end{align*}
  so when \( f \in B_{(\delta/4, \infty)}(\f) \), we have
  \begin{equation*}
    P{[L(f, \blank)- L(\hat{\pi}_n(f), \blank)]} >
    \frac{1}{C} \frac{\delta^2}{16} - \frac{\delta^2}{16 C^3}
    >
    \delta^2 \frac{1}{16 C}
    \left(
      1 - \frac{1}{C^3}
    \right)
    \gtrsim \delta^2.
  \end{equation*}
  Together with equation~(\ref{eq:53}) this shows that condition~\C{} holds.
  Because of Assumptions~\ref{assum:loss-assum}~\ref{item:1} and
  Assumptions~\ref{assum:loss-assum}~\ref{item:5}, condition~\CC{} holds
  by Lemma~\ref{lemma:bound-emp-brack}. By definition of \( \hat{f}_n \) and
  \( \hat{\pi}_n(\f) \), we have
  \begin{equation}
    \label{eq:51}
    \empmeas{[L(\hat{f}_n, \blank)]} \leq \empmeas{[L(\hat{\pi}_n(\f), \blank)]},
  \end{equation}
  so condition~\CCC{} is trivially true. We now show that
  condition~\CCCC{} holds. By
  Assumption~\ref{assum:loss-assum}~\ref{item:2} and equation~(\ref{eq:51}) we
  can write
  \begin{align*}
    \frac{1}{C}\Vert \f - \hat{f}_n \Vert_{\leb}^2 
    & \leq P{[L(\hat{f}_n, \blank) - L(\f,
      \blank)]}
    \\
    & =
      (P- \empmeas){[L(\hat{f}_n, \blank) - L(\f, \blank)]}
      + \empmeas{[L(\hat{f}_n, \blank) - L(\f, \blank)]} 
    \\
    & \leq
      (P- \empmeas){[L(\hat{f}_n, \blank) - L(\f, \blank)]}
      + \empmeas{[L(\hat{\pi}_n(\f), \blank) - L(\f, \blank)]}
    \\
    & =
      (P- \empmeas){[L(\hat{f}_n, \blank) - L(\f, \blank)]}
      + P{[L(\hat{\pi}_n(\f), \blank) - L(\f, \blank)]}
    \\
    & \quad
      - (P-\empmeas){[L(\hat{\pi}_n(\f), \blank) - L(\f, \blank)]} 
    \\
    & \leq
      4 \sup_{L\in \mathcal{L}_M}\left| (\empmeas-P){[L]} \right|
      + 
      P{[L(\hat{\pi}_n(\f), \blank) - L(\f, \blank)]}    .
  \end{align*}
  By Assumption~\ref{assum:loss-assum}~\ref{item:2} and
  Lemma~\ref{lemma:projection}, the second term on the right hand side converges
  to zero in probability. Proposition~1 in \citep{bibaut2019fast} and
  Theorem~2.4.1 in \citep{van1996weak} together with
  Assumption~\ref{assum:loss-assum}~\ref{item:5} imply that \( \mathcal{L}_M \)
  is a Glivenko-Cantelli class of functions. This implies that also the first
  term on the right converges to zero in probability, so
  \( \Vert \f - \hat{f}_n \Vert_{\leb} \arrow{P^*} 0 \). By
  Lemma~\ref{lemma:projection}, we also have
  \( \Vert \f - \hat{\pi}_n(\f) \Vert_{\leb} \arrow{P^*} 0 \), so this implies
  condition~\CCCC{}.
\end{proof}

\section{Additional proofs}
\label{sec:additional-proofs}

\subsection{Right-censored data}
\label{sec:negat-log-likel}

\begin{proof}[Proof of Proposition~\ref{prop:emp-risk-min-surv-dont-exist}]
  Let \( f^{\circ} \in \DD{1}_M \) be a function and \( j \in \{1, \dots, n-1\} \) an
  index such that \( f^{\circ}(\tilde{T}_{(j)}) > f^{\circ}(\tilde{T}_{(j+1)}) \). We shall
  construct a function \( \check{f} \in\DD{1}_M\) such that
  \( \empmeas{[L^{\mathrm{pl}}(\check{f}, \blank)]} < \empmeas{[L^{\mathrm{pl}}(f^{\circ}, \blank)]}\) when \( L^{\mathrm{pl}} \)
  is the negative log-likelihood defined in
  equation~(\ref{eq:nll-loss-survival}). This implies that \( f^{\circ} \) cannot be the
  minimizer of the empirical risk over \( \DD{1}_M \). To find \( \check{f} \)
  we first define
  \begin{equation*}
    V = \inf_{u \in [\tilde{T}_{(j)}, \tilde{T}_{(j+1)}]} f^{\circ}(u),
  \end{equation*}
  and
  \begin{equation*}
    f_{\epsilon}(t) = \1{\{t \in [\tilde{T}_{(j)} + \epsilon,
      \tilde{T}_{(j+1)})\}}V +
    \1{\{t \not\in [\tilde{T}_{(j)} + \epsilon, \tilde{T}_{(j+1)})\}} f^{\circ}(t),
    \quad
  \end{equation*}
  for \( \epsilon \in ( 0, [\tilde{T}_{(j+1)} - \tilde{T}_{(j)}]/2 ) \). In
  words, \( f_{\epsilon} \) is identical to \( f^{\circ} \), except on the
  interval \( [\tilde{T}_{(j)} + \epsilon, \tilde{T}_{(j+1)}) \) where it is
  constant and equals \( V \). Note that by the assumption that
  \( f^{\circ}(\tilde{T}_{(j)}) > f^{\circ}(\tilde{T}_{(j+1)}) \) we must have
  \begin{equation}
    \label{eq:6}
    f^{\circ}(\tilde{T}_{(j)}) > V. 
  \end{equation}
  As \( f^{\circ} \) is càdlàg, so is \( f_{\epsilon} \), and as \( f_{\epsilon} \) does
  not fluctuate more than \( f^{\circ} \), we must have
  \( \Vert f_{\epsilon} \Vert_v \leq \Vert f^{\circ} \Vert_v\). Thus
  \( f_{\epsilon} \in \DD{1}_M \). Now, by equation~(\ref{eq:6}) and because
  \( f^{\circ} \) is continuous from the right, we can find a $\delta>0$ and an
  $\epsilon_0 >0$ such that \( f^{\circ}(t) > V + \delta \) for all
  \( t \in [\tilde{T}_{(j)}, \tilde{T}_{(j)} + \epsilon_0] \). Thus, if we
  define \( \check{f} = f_{\epsilon_0/2} \) and
  $\mathcal{I} = (\tilde{T}_{(j)} + \epsilon_0/2, \tilde{T}_{(j)} +
  \epsilon_0)$, this implies that \( \check{f}(t) \leq f^{\circ}(t) \) for all
  $t \in [0,1] $ and \( \check{f}(t)< f^{\circ}(t)-\delta \) for
  \( t \in \mathcal{I} \). This in turn implies that
  \begin{equation}
    \label{eq:19}
    \int_0^{\tilde{T}_i} e^{\check{f}(u)} \diff u
    = \int_0^{\tilde{T}_i} e^{f^{\circ}(u)}  \diff u,
    \quad \text{for all} \quad
    i \leq j,
  \end{equation}
  and
  \begin{equation}
    \label{eq:7}
    \int_0^{\tilde{T}_i} e^{\check{f}(u)} \diff u
    < \int_0^{\tilde{T}_i} e^{f^{\circ}(u)}  \diff u,
    \quad \text{for all} \quad
    i > j.
  \end{equation}
  Finally, we have by construction that
  \begin{equation}
    \label{eq:1}
    \check{f}(\tilde{T}_i) = f^{\circ}(\tilde{T}_i)  \quad \text{for all} \quad   i \in \{1, \dots, n\} .
  \end{equation}
  Equations~(\ref{eq:19})-(\ref{eq:1}) together imply that
  \( \empmeas{[L^{\mathrm{pl}}(\check{f}, \blank)]} < \empmeas{[L^{\mathrm{pl}}(f^{\circ}, \blank)]}\).
\end{proof}

\begin{proof}[Proof of Proposition~\ref{prop:hal-surv-exists}]
  Let
  \(\mathcal{B}_M = \{\beta \in \R^{m(d,n)} : \Vert \beta \Vert_1 \leq M\}\). By
  construction, for any \(\mathbf{w}\) and \(\beta \in \mathcal{B}_M\) the map
  \(s\mapsto f_{\beta,n}(s,\mathbf{w})\) is constant on
  \([\tilde{T}_{(j-1)}, \tilde{T}_{(j)})\) for all \(j=1, \dots, n'\), and thus
  we can write
  \begin{equation}
    \label{eq:14}
    \int_0^{\tilde{T}_i}
    e^{f_{\beta,n}(s, W_i)} \diff s
    =     \sum_{j=1}^{n'} \1{\{\tilde{T}_i \geq \tilde{T}_{(j-1)}
      \}}(\tilde{T}_{(j)}  \wedge \tilde{T}_i-\tilde{T}_{(j-1)}) e^{f_{\beta,n}(\tilde{T}_{(j-1)}, W_i)}.
  \end{equation}
  For any \(t\) and \(\mathbf{w}\), the map \(\beta \mapsto f_{\beta,n}(t,\mathbf{w})\) is linear
  and as \( z \mapsto e^z \) is convex and non-decreasing it follows that
  \(\beta \mapsto e^{f_{\beta,n}(t,\mathbf{w})}\) is convex
  \citep[][Section~3.2.4]{boyd2004convex}. Thus equation~(\ref{eq:14}) implies
  that the map
  \( \beta \mapsto \empmeas{[L^{\mathrm{pl}}(f_{\beta,n}, \blank)]} \) is
  convex, and as $\mathcal{B}_M$ is convex it follows that the problem
  in~(\ref{eq:32}) is convex. The minimum is attained because the map
  \( \beta \mapsto \empmeas{[L^{\mathrm{pl}}(f_{\beta,n}, \blank)]} \) is
  continuous and $\mathcal{B}_M$ is compact.
\end{proof}

\begin{lemma}
  \label{lemma:haz-dens-equi}
  Let \( T \in [0, 1 + \epsilon] \) for some $\epsilon>0$,
  \( W \in [0,1]^{d-1} \), and assume that the conditional distribution of
  \( T \mid W=w \) has a Lebesgue density for all \( w \in [0,1]^{d-1} \). For
  $q$ a conditional density function for \( T \) given \( W \), let \( h_q \)
  the associated hazard function and \( S_q \) the associated survival function.
  Let
  \( \mathcal{Q}_M = \{q : \sup_{t \in [0,1]} \Vert h_q(t, \blank)
  \Vert_{\infty} \leq M \} \) for some \( M < \infty \). Let $\leb$ denote
  Lebesgue measure on \( [0,1]^d \). The following holds for all
  \( q, p \in \mathcal{Q}_M \).
  \begin{enumerate}[label=(\roman*)]
  \item
    \label{item:haz-dens-i}
    \( \left\Vert S_q- S_p \right\Vert_{\leb} \leq \left\Vert q- p
    \right\Vert_{\leb} \) and
    \( \left\Vert S_q- S_p \right\Vert_{\leb} \leq \left\Vert h_q- h_p
    \right\Vert_{\leb} \).
  \item \label{item:haz-dens-ii}
    \( \Vert h_p - h_q \Vert_{\leb} \leq (e^M + M e^{2M})\Vert p - q
    \Vert_{\leb} \) and
    \(\Vert p - q \Vert_{\leb} \leq (e^M + M) \Vert h_p - h_q \Vert_{\leb} \).
  \end{enumerate}
\end{lemma}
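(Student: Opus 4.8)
The plan is to reduce the four inequalities to elementary manipulations of the three identities relating $q$, $S_q$ and $h_q$, together with the pointwise bounds they yield on $[0,1]^d$. Fix $q\in\mathcal{Q}_M$. Since $h_q\geq 0$ and $h_q(t,w)\leq M$ for $t\in[0,1]$, the representation $S_q(t,w)=\exp\{-\int_0^t h_q(s,w)\diff s\}$ gives $e^{-M}\leq S_q(t,w)\leq 1$ on $[0,1]^d$; combined with $q=h_qS_q$ this also gives $q(t,w)\leq M$ there. I would record these bounds at the outset, since they are used repeatedly and the lower bound $S_q\geq e^{-M}$ is what keeps the ratio $h_q=q/S_q$ under control.

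For part~(i), first inequality: because $q$ is a conditional density it integrates to one in its first argument, so $S_q(t,w)=1-\int_0^t q(s,w)\diff s$ and hence $S_q(t,w)-S_p(t,w)=-\int_0^t(q-p)(s,w)\diff s$, an integral over $[0,t]\subseteq[0,1]$ only. Applying the Cauchy--Schwarz inequality in the $s$-variable and then Tonelli's theorem to interchange the $s$- and $t$-integrations turns this into $\Vert S_q-S_p\Vert_{\leb}\leq\Vert q-p\Vert_{\leb}$. For the second inequality, use instead the exponential representation: since $x\mapsto e^{-x}$ is $1$-Lipschitz on $[0,\infty)$ and both $\int_0^t h_q$ and $\int_0^t h_p$ lie in $[0,M]$ for $t\in[0,1]$, one gets $|S_q(t,w)-S_p(t,w)|\leq\int_0^t|h_q-h_p|(s,w)\diff s$, and the same Cauchy--Schwarz/Tonelli step gives $\Vert S_q-S_p\Vert_{\leb}\leq\Vert h_q-h_p\Vert_{\leb}$.

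For part~(ii) I would apply the same add-and-subtract trick to the products. Writing
\begin{equation*}
  h_q-h_p=\frac{q(S_p-S_q)}{S_qS_p}+\frac{q-p}{S_p},
  \qquad
  q-p=S_q(h_q-h_p)+h_p(S_q-S_p),
\end{equation*}
and bounding the factors with $e^{-M}\leq S_q,S_p\leq 1$, $q\leq M$ and $h_p\leq M$ gives the pointwise estimates $|h_q-h_p|\leq Me^{2M}|S_p-S_q|+e^M|q-p|$ and $|q-p|\leq M|S_q-S_p|+|h_q-h_p|$. Taking $\mathcal{L}^2(\leb)$-norms (Minkowski) and feeding in the two inequalities from part~(i) then yields $\Vert h_q-h_p\Vert_{\leb}\leq(e^M+Me^{2M})\Vert q-p\Vert_{\leb}$ and $\Vert q-p\Vert_{\leb}\leq(1+M)\Vert h_q-h_p\Vert_{\leb}$, the latter being bounded by $(e^M+M)\Vert h_q-h_p\Vert_{\leb}$ since $1\leq e^M$.

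The argument is entirely elementary, so I do not anticipate a genuine obstacle; the only points that require a moment's care are the observation that writing $S_q$ as one minus the conditional c.d.f.\ (rather than as the integral of $q$ over $[t,1+\epsilon]$) keeps the first bound of part~(i) free of spurious constants despite the support of $q$ extending beyond $[0,1]$, and the bookkeeping of the exponential factors $e^{-M}\leq S\leq 1$ when estimating the two product decompositions in part~(ii).
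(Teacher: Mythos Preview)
Your proposal is correct and follows essentially the same approach as the paper: the paper also reduces (i) to a Jensen/Cauchy--Schwarz step applied to $S_q-S_p=\int_0^t(p-q)$ (and to $H_p-H_q$ via the mean value theorem in place of your $1$-Lipschitz bound on $e^{-x}$), and handles (ii) by the same add-and-subtract product decompositions of $h_q-h_p$ and $q-p$, combined with the bounds $S\geq e^{-M}$, $S\leq 1$, $h\leq M$, $q\leq M$ and part~(i). The only cosmetic differences are that the paper uses the mean value theorem where you invoke Lipschitz continuity or the algebraic identity $1/S_q-1/S_p=(S_p-S_q)/(S_qS_p)$, and swaps the roles of $p$ and $q$ in the decompositions; the constants and logic are otherwise identical.
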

\begin{proof}[Proof of Lemma~\ref{lemma:haz-dens-equi}]
  The first inequality in statement~\ref{item:haz-dens-i} follows from Jensen's
  inequality:
  \begin{equation}
    \label{eq:49}
    \begin{split}
      \left\Vert 
      S_q- S_p
      \right\Vert_{\leb}^2
      & = \int_{[0,1]^{d-1}} \int_{[0,1]}  
        \left\{
        [S_q - S_p](s, \mathbf{w})
        \right\}^2 \diff s \diff \mathbf{w}
      \\
      & = \int_{[0,1]^{d-1}} \int_{[0,1]}  
        \left\{
        \int_0^s [p-q](u, \mathbf{w})  \diff u
        \right\}^2 \diff s \diff \mathbf{w}
      \\
      & = \int_{[0,1]^{d-1}} \int_{[0,1]}  
        s^2 \left\{
        \int_0^s [p-q](u, \mathbf{w})  \frac{\diff u}{s}
        \right\}^2 \diff s \diff \mathbf{w}
      \\
      & \leq \int_{[0,1]^{d-1}} \int_{[0,1]}  
        s^2 \int_0^s \left\{
        [p-q](u, \mathbf{w})  
        \right\}^2 \frac{\diff u}{s} \diff s \diff \mathbf{w}
      \\
      & \leq \int_{[0,1]^{d-1}} \int_{[0,1]}  
        \int_0^s \left\{
        [p-q](u, \mathbf{w})  
        \right\}^2 \diff u \diff s \diff \mathbf{w}
      \\
      & \leq \int_{[0,1]^{d-1}} \int_{[0,1]}  
        \int_0^1 \left\{
        [p-q](u, \mathbf{w})  
        \right\}^2 \diff u \diff s \diff \mathbf{w}
      \\
      & = \Vert q- p \Vert_{\leb}^2.
    \end{split}
\end{equation}
  For the second inequality in statement~\ref{item:haz-dens-i}, let \( H_q \)
  denote the conditional cumulative hazard function associated with the density
  \( q \). By the mean value theorem we may write
  \begin{equation*}
    e^{-H_q} -  e^{-H_p} = e^{-H_{q, p}}(H_p - H_q),
  \end{equation*}
  for some positive function \( H_{q, p} \). Hence by Hölder's inequality
  \begin{equation*}
    \left\Vert 
      S_q- S_p
    \right\Vert_{\leb}^2
    \leq
    \left\Vert 
      H_p -  H_q
    \right\Vert_{\leb}^2
    = \int_{[0,1]^{d-1}} \int_{[0,1]}  
    \left\{
      \int_0^s [h_p-h_q](u, \mathbf{w})  \diff u
    \right\}^2 \diff s \diff \mathbf{w},
  \end{equation*}
  and so Jensen's inequality (in the same way as in equation~(\ref{eq:49}))
  gives that
  \begin{equation*}
    \left\Vert 
      S_q- S_p
    \right\Vert_{\leb}^2 \leq
    \left\Vert 
      h_q- h_p
    \right\Vert_{\leb}^2.
  \end{equation*}
  This shows statement~\ref{item:haz-dens-i}. To obtain the first inequality in
  statement~\ref{item:haz-dens-ii} we write
  \begin{align*}
    \Vert h_q - h_p \Vert_{\leb} = 
    \left\Vert
    \frac{q}{S_q} - \frac{p}{S_p}
    \right\Vert_{\leb}
    & \leq  \left\Vert
      \frac{1}{S_q}(q-p)
      \right\Vert_{\leb}
      + \left\Vert p
      \left(
      \frac{1}{S_q}- \frac{1}{S_p}
      \right)
      \right\Vert_{\leb}
    \\
    & \leq \Vert S_q^{-1} \Vert_{\infty}  \left\Vert
      q-p
      \right\Vert_{\leb}
      + \Vert p \Vert_{\infty}
      \left\Vert 
      \left(
      \frac{1}{S_q}- \frac{1}{S_p}
      \right)
      \right\Vert_{\leb},
  \end{align*}
  where we use \( \Vert \blank \Vert_{\infty} \) to denote the supremum norm on
  \( [0,1]^d \). By the mean value theorem we may write
  \begin{equation*}
    \frac{1}{S_q}- \frac{1}{S_p} = \frac{1}{S_{q,p}^2}(S_q- S_p),
  \end{equation*}
  for a function \( S_{q,p} \) such that
  \( S_q \wedge S_p < S_{q,p} < S_q \vee S_p \). It follows that
  \begin{equation}
    \label{eq:45}
    \begin{split}
      \Vert h_q - h_p \Vert_{\leb} 
      & \leq \Vert S_q^{-1} \Vert_{\infty}  \left\Vert
        q-p
        \right\Vert_{\leb}
        + \Vert p \Vert_{\infty}
        \Vert (S_q \wedge S_p)^{-2} \Vert_{\infty}
        \left\Vert 
        \left(
        S_q- S_p
        \right)
        \right\Vert_{\leb}
      \\
      &\leq
        \left(
        \Vert S_q^{-1} \Vert_{\infty}
        +
        \Vert p \Vert_{\infty}
        \Vert (S_q \wedge S_p)^{-2} \Vert_{\infty}
        \right)
        \left\Vert
        q-p
        \right\Vert_{\leb},
    \end{split}
  \end{equation}
  where the last inequality follows from statement~\ref{item:haz-dens-i}. As
  \( p \in \mathcal{Q}_M \) and \( p = h_p S_p \) we have
  \( \Vert p \Vert_{\infty} \leq M \). As
  \( \Vert S_p^{-1} \Vert_{\infty} \leq \Vert \exp{\{\int_0^1 h_p(s, \blank)
    \diff s \}} \Vert_{\infty} \leq e^{M} \) and \( p,q \in \mathcal{Q}_M \),
  \( S_q^{-1} \), we have
  \begin{equation*}
    \left(
      \Vert S_q^{-1} \Vert_{\infty}
      +
      \Vert p \Vert_{\infty}
      \Vert (S_q \wedge S_p)^{-2} \Vert_{\infty}
    \right)
    \leq e^M + M e^{2M}.
  \end{equation*}
  This shows the first inequality in statement~\ref{item:haz-dens-ii}. For the
  second inequality we write
  \begin{align*}
    \Vert q - p \Vert_{\leb}
    &
      = \Vert (h_q - h_p)S_q \Vert_{\leb} +  \Vert h_p
      (S_q-S_p) \Vert_{\leb}
    \\
    & \leq
      \Vert S_q \Vert_{\infty}\Vert h_q - h_p \Vert_{\leb} +
      \Vert h_p
      \Vert_{\infty} \Vert  S_q-S_p \Vert_{\leb}
    \\
    & \leq
      \left(
      \Vert S_q\Vert_{\infty} +       \Vert h_p
      \Vert_{\infty} 
      \right)\Vert h_q - h_p \Vert_{\leb},
    \\
    & \leq
      \left(
      e^M + M 
      \right)\Vert h_q - h_p \Vert_{\leb},
  \end{align*}
  where the second to last inequality follows from
  statement~\ref{item:haz-dens-i}.
\end{proof}

\begin{proof}[Proof of Lemma~\ref{lemma:unique-min}]
  To show Lemma~\ref{lemma:unique-min} we need to find constants \( 0<c_M<C_M<
  \infty \) such that
  \begin{align}
    \label{eq:46}
    P[{L^{\mathrm{pl}}(f, \blank)-L^{\mathrm{pl}}(f_P, \blank)}]
    & \geq c_M \Vert f-f_P\Vert_{\leb}^2
          \intertext{and}
        \label{eq:47}
      P[{L^{\mathrm{pl}}(f, \blank)-L^{\mathrm{pl}}(f_P, \blank)}]
    & \leq C_M \Vert f-f_P\Vert_{\leb}^2.
  \end{align}
  Let \( P_{f} \) denote the distribution of the observed data induced by the
  marginal density \( P_W \), the conditional hazard for censoring
  \( \gamma_P \), and the conditional log-hazard for the event time of interest
  \( f \). Let
  \( \nu = P_W \otimes (\leb \otimes \tau + \delta_{\{1\} \times \{0\}}) \)
  denote a measure on the sample space
  \( \mathcal{O} = [0, 1]^{d-1} \times [0,1] \times \{0,1\} \) where \( \leb \)
  denotes Lebesgue measure, \( \tau \) the counting measure, \( \delta \) Dirac
  measure, and \( \leb \otimes \tau \) and \( \delta_{\{1\} \times \{0\}} \) are
  considered as measures on \( [0,1] \times \{0,1\} \). Then for every
  \( f \in \D_M\), \( P_f \ll \nu \) and if we let \( p_f \) denote the
  Radon-Nikodym derivative of \( P_f \) with respect to \( \nu \) we have a.s.,
\begin{equation}
  \label{eq:21}
  \begin{split}
    p_f(\mathbf{w}, t, \delta)
    & = 
      \left(
      e^{f(t, \mathbf{w})}\exp{
      \left\{
      - \int_0^t [e^{f(s, \mathbf{w})} + \gamma_P(s,\mathbf{w}) ]  \diff s
      \right\}}
      \right)^{\delta}
    \\
    & \quad\times
      \left(
      \gamma_P(t, \mathbf{w})^{\1_{[0,1)}(t)}
      \exp
      {\left\{
      {- \int_0^t [e^{f(s, \mathbf{w})} + \gamma_P(s,\mathbf{w}) ]  \diff s}
      \right\}}
      \right)^{1-\delta}
    \\
    &= 
      \left(
      e^{f(t, \mathbf{w})}
      \right)^{\delta}
      \exp{
      \left\{
      - \int_0^t e^{f(s, \mathbf{w})}    \diff s
      \right\}}
    \\
    & \quad\times
      \left(
      \gamma_P(t, \mathbf{w})^{\1_{[0,1)}(t)}      
      \right)^{1-\delta}
      \exp{
      \left\{
      - \int_0^t \gamma_P(s,\mathbf{w})    \diff s
      \right\}},
    \\
    & =: q_f(\mathbf{w}, t, \delta) g(\mathbf{w}, t, \delta),
  \end{split}
\end{equation}
where \( q_f \) denotes a component of the likelihood that depends only on
\( f \), and \( g \) denotes a component that depends only on \( \gamma_P \).
From this it follows that
\begin{equation}
  \label{eq:23}
  \begin{split}  
    \KL(P_{f_0} \, || \, P_f)
    & = \int \log{\frac{p_{f_0}}{p_f}} p_{f_0} \diff \nu
    \\
    & = \int_{[0,1]^d\times\{0,1\}}
      \bigg[
      \int_0^t e^{f(s,\mathbf{w})} \diff s - \delta f(t,\mathbf{w})
    \\  
    & \qquad -
      \left(
      \int_0^t e^{f_0(s,\mathbf{w})} \diff s - \delta f_0(t,\mathbf{w})
      \right)
      \bigg]
      p_{f_0}(\mathbf{w}, t, \delta)  \diff \nu(\mathbf{w}, t, \delta)
    \\
    & = P_{f_0}{[L^{\mathrm{pl}}(f, \blank)]} - P_{f_0}{[L^{\mathrm{pl}}(f_0, \blank)]},
  \end{split}
\end{equation}
where \( \KL\) is the Kullback-Leiber divergence. Following
\cite[p.~62]{van2000asymptotic} we have
\begin{equation}
  \label{eq:42}
  \begin{split}
    \KL(P_{f_0} \, || \, P_f)
    & \geq \int 
      \left(
      \sqrt{p_{f_0}} - \sqrt{p_{f}}
      \right)^2 \diff \nu
    \\
    & \geq
      \left(
      \Vert(\sqrt{p_{f_0}} + \sqrt{p_{f}})^2 \Vert_{\infty}
      \right)^{-1}
      \int 
      \left(
      p_{f_0} - p_{f}
      \right)^2 \diff \nu
    \\
    & \geq
      \left(      
      4 e^{M} (\| \gamma_P \|_{\infty} \vee 1)
      \right)^{-1}
      \int 
      \left(
      p_{f_0} - p_{f}
      \right)^2 \diff \nu
    \\
    & =
      \left(      
      4 e^{M} (\| \gamma_P \|_{\infty} \vee 1)
      \right)^{-1}
      \int 
      \left(
      p_{f_0} - p_{f}
      \right)^2 \diff \nu.
  \end{split}
\end{equation}
Let \( S_f(t, \mathbf{w}) = \exp(-\int_0^t e^{f(s, \mathbf{w})}\diff s) \)
denote the conditional survival function associated with the conditional hazard
function \( e^f \), and \( q_f^* = e^f S_f \) the conditional density associated
with the conditional hazard function \( e^f \). We have
\begin{equation}
  \label{eq:43}
  \begin{split}
    \int 
    \left(
    p_{f_0} - p_{f}
    \right)^2 \diff \nu
    & = \int 
      g^2\left(
      q_{f_0} - q_{f}
      \right)^2 \diff \nu
    \\
    & 
      \geq
      \int_{[0,1]^{d-1} \times [0,1) \times \{1\}}
      g^2\left(
      q_{f_0} - q_{f}
      \right)^2  \diff \nu
    \\
    & \geq e^{\Vert \gamma_P \Vert_{\infty}}\int_{[0,1]^{d-1}} \int_0^1  (q^*_{f_0} - q^*_{f})^2    
      \diff
      (\leb \otimes P_W)
    \\
    & 
      \geq
      e^{-\Vert \gamma_P \Vert_{\infty}}{\Vert \omega_P^{-1}\Vert_{\infty}}
      \Vert q^*_{f_0} - q^*_{f}\Vert_{\leb}^2.
  \end{split}
\end{equation}
By assumption,
\( e^{-\Vert \gamma_P \Vert_{\infty}}{\Vert \omega_P^{-1}\Vert_{\infty}} > 0 \)
and so Lemma~\ref{lemma:haz-dens-equi}~\ref{item:haz-dens-ii} and the mean value
theorem imply that
\begin{equation}
  \label{eq:48}
  \int 
  \left(
    p_{f_0} - p_{f}
  \right)^2 \diff \nu
  \geq \tilde{c}_M  \Vert f_0 - f\Vert_{\leb}^2,
\end{equation}
for some \( \tilde{c}_M \in (0, \infty) \). Combining
equations~(\ref{eq:23}),~(\ref{eq:42}) and~(\ref{eq:48}) gives
inequality~(\ref{eq:46}).

To show inequality~(\ref{eq:47}) we use, e.g.,
\cite[][Theorem~5]{gibbs2002choosing} to argue that
\begin{equation*}
  \KL(P_{f_0} \, || \, P_f) \leq
  \int \frac{(p_{f_0} - p_f)^2}{p_f}  \diff \nu.
\end{equation*}
Using the decomposition in equation~(\ref{eq:21}) we obtain
\begin{equation}
  \label{eq:24}
  \begin{split}
    \KL(P_{f_0} \, || \, P_f)
    & \leq
      \int \frac{ g^2(q_{f_0} - q_f)^2}{ q_f g}  \diff \nu
    \\
    & = \int \frac{ g (q_{f_0} - q_f)^2}{q_f}  \diff \nu
    \\
    & \leq
      \exp{\{M + e^{-M}\}} (\| \gamma_P \|_{\infty} \vee 1) \int  (q_{f_0} - q_f)^2  \diff \nu,
  \end{split}
\end{equation}
where we used that \( 1/q_{f} \) is bounded by \( \exp{\{M + e^{-M}\}} \) for
all \( f \in \D_M \), and that \( g \) is bounded by
\( \| \gamma_P \|_{\infty} \vee 1 \). Using that
\( [0,1]^{d-1}\times\{1\}\times\{1\} \) is a null set under $\nu$, we can write
\begin{equation}
  \label{eq:50}
  \begin{split}
    & \int  (q_{f_0} - q_f)^2  \diff \nu
    \\
    & =
      \int_{[0,1]^{d-1}\times [0, 1) \times \{1\}}   (q_{f_0} - q_f)^2
      \diff \nu    
      +
      \int_{[0,1]^{d-1}\times  [0,1] \times \{0\}}   (q_{f_0} - q_f)^2  \diff
      \nu
    \\
    & =
      \int_{[0,1]^{d-1}\times [0, 1)}   (q_{f_0}^* - q_f^*)^2
      \diff     
      (\leb \otimes P_w) +
      \int_{[0,1]^{d-1}\times  [0,1] }   (S_{f_0} - S_f)^2  \diff
      (\leb \otimes P_w)
    \\
    & \leq
      \int_{[0,1]^{d-1}\times [0, 1]}  
      \left\{
      (q_{f_0}^* - q_f^*)^2
      +  (S_{f_0} - S_f)^2  
      \right\}\diff
      (\leb \otimes P_w)
    \\
    & \leq \Vert\omega_P\Vert_{\infty}
      \left(
      \int_{[0,1]^{d}}   (q_{f_0}^* - q_f^*)^2
      + (S_{f_0} - S_f)^2  \diff
      \leb
      \right)
    \\
    & \leq \Vert\omega_P\Vert_{\infty}
      \left(
      \Vert q_{f_0}^* - q_f^* \Vert_{\leb}^2
      + \Vert S_{f_0}^* - S_f^* \Vert_{\leb}^2
      \right),
  \end{split}
\end{equation}
and the inequality~(\ref{eq:47}) then follows from
Lemma~\ref{lemma:haz-dens-equi} combined with
equations~(\ref{eq:23}),~(\ref{eq:24}) and~(\ref{eq:50}).
\end{proof}

\begin{proof}[Proof of Corollary~\ref{cor:main-survival}]
  First note that because condition~\ref{assum:f0}~\ref{item:11} is assumed to
  hold, \( f_P = \f \) a.e.\ by Lemma~\ref{lemma:unique-min}. Thus
  Corollary~\ref{cor:main-survival} follows from
  Theorem~\ref{theorem:main-result} if we can show that
  Assumption~\ref{assum:loss-assum} is true.
  Assumption~\ref{assum:loss-assum}~\ref{item:1} follows by definition of the
  loss function and \ref{assum:loss-assum}~\ref{item:2} follow from
  Lemma~\ref{lemma:unique-min} as $\gamma_P$ and \( \omega_P \) are assumed
  uniformly bounded. It thus only remains to show
  \ref{assum:loss-assum}~\ref{item:5}. To do so, let $\epsilon>0$ be given and
  let \([l_1, u_1], \dots, [l_K, u_K]\) denote a collection of
  \(\epsilon\)-brackets with respects to \( \| \blank \|_{\leb} \) covering
  $\D_M$. By definition of the bracketing number we can take
  \(K=N_{[\,]}(\epsilon, \D_M, \|\blank\|_{\leb})\). Define for all
  \(k=1, \dots, K\),
  \begin{align*}
    \tilde{l}_k(t, \delta, \mathbf{w})
    = \delta l_k(t, \mathbf{w})
    - \int_0^{t} e^{u_k(s, \mathbf{w})} \diff s,
    \quad \text{and} \quad 
    \tilde{u}_k(t, \delta, \mathbf{w})
    = \delta u_k(t, \mathbf{w})
    - \int_0^{t} e^{l_k(s, \mathbf{w})} \diff s.
  \end{align*}
  Any element in \( \mathcal{L}_M\) is on the form
  \( L^{\mathrm{pl}}(f, \blank)\) for some \(f \in \D_M \). If
  \([l_k, u_k]\) is a bracket containing \(f\) then it follows that
  \([\tilde{l}_k, \tilde{u}_k]\) contains \(L^{\mathrm{pl}}(f, \blank)\). Thus
  \([\tilde{l}_1, \tilde{u}_1], \dots, [\tilde{l}_K, \tilde{u}_K]\) is a
  collection of brackets covering $\mathcal{L}_M$. If we let $\E$ denote
  expectation under \( P \) we have by the triangle inequality
  \begin{align*}
    \Vert \tilde{l}_k - \tilde{u}_k \Vert_{P}
    & \leq \E{\left[ \Delta
      \left\{
      l_k(\tilde{T}, W) - u_k(\tilde{T}, W)
      \right\}^2 \right]}^{1/2}
    \\
    & \qquad
      +
      \E{\left[ 
      \left\{
      \int_0^{\tilde{T}}  e^{u_k(s, W)} -  e^{l_k(s, W)} \diff s          
      \right\}^2 \right]}^{1/2}.
  \end{align*}
  By equation~(\ref{eq:33}), \( \Delta = \Delta \1{\{\tilde{T} < 1\}} \) a.s.,
  which implies
  \begin{equation*}
    \Delta
    \left\{
      l_k(\tilde{T}, W) - u_k(\tilde{T}, W)
    \right\}^2
    \leq
    \1{\{\tilde{T} < 1\}}
    \left\{
      l_k(\tilde{T}, W) - u_k(\tilde{T}, W)
    \right\}^2 \quad \text{a.s.},
  \end{equation*}
  and so
  \begin{align*}
    & \E{\left[ \Delta
      \left\{
      l_k(\tilde{T}, W) - u_k(\tilde{T}, W)
      \right\}^2 \right]}
    \\
    & \leq
      \E{\left[ \1{\{\tilde{T} < 1\}}
      \left\{
      l_k(\tilde{T}, W) - u_k(\tilde{T}, W)
      \right\}^2 \right]}
    \\ 
    & = \int_{[0,1]^{d-1}} \int_0^1 \left\{
      l_k(s, \mathbf{w}) - u_k(s, \mathbf{w})
      \right\}^2 h(s,\mathbf{w}) e^{-\int_0^s h(u,\mathbf{w}) \diff u}\omega_P(\mathbf{w}) \diff s \diff \mathbf{w},
  \end{align*}
  where we use \( h(\blank,\mathbf{w}) \) to denote the conditional hazard for
  \( \tilde{T} \) on \( [0,1) \) given \( W=\mathbf{w} \). By assumption,
  \( \|h \omega_P\|_{\infty} \leq B \) for some finite constant \( B \), and so we
  obtain
  \begin{equation*}
    \E{\left[ \Delta
        \left\{
          l_k(\tilde{T}, W) - u_k(\tilde{T}, W)
        \right\}^2 \right]}^{1/2}
    \leq B \| l_k - u_k \|_{\leb}.
  \end{equation*}
  By Jensen's inequality and the mean value theorem we similarly obtain
  \begin{align*}
    \E{\left[ 
    \left\{
    \int_0^{\tilde{T}}  e^{u_k(s, W)} -  e^{l_k(s, W)} \diff s          
    \right\}^2 \right]}^{1/2}
    & \leq
      \E{\left[ {\tilde{T}}
      \int_0^{\tilde{T}}  
      \left(
      e^{u_k(s, W)} -  e^{l_k(s, W)}
      \right)^2 \diff s          
      \right]}^{1/2}
    \\
    & \leq
      \E{\left[ 
      \int_0^{1}  
      \left(
      e^{u_k(s, W)} -  e^{l_k(s, W)}
      \right)^2 \diff s          
      \right]}^{1/2}
    \\
    & \leq
      e^M \E{\left[ 
      \int_0^{1}  
      \left\{
      u_k(s, W) -  l_k(s, W)
      \right\}
      ^2 \diff s          
      \right]}^{1/2}
    \\
    & \leq e^M  \|\omega_P\|_{\infty} \|u_k - l_k \|_{\leb},
  \end{align*}
  and so we have
  \begin{equation*}
    \Vert \tilde{l}_k - \tilde{u}_k \Vert_{P}
    \leq
    \left(
      B + e^M  \|\omega_P\|_{\infty}
    \right) \|u_k - l_k \|_{\leb}.
  \end{equation*}
  Thus \([\tilde{l}_1, \tilde{u}_1], \dots, [\tilde{l}_K, \tilde{u}_K]\) is a
  collection of $(B + e^M \|\omega_P\|_{\infty})\epsilon$-brackets covering
  $\mathcal{L}_M$, which shows that
  \( N_{[\,]}(\epsilon, \mathcal{L}_M, \Vert \blank \Vert_{P}) \leq
  N_{[\,]}(\epsilon/(B + e^M \|\omega_P\|_{\infty}), \D_M, \Vert \blank
  \Vert_{\leb})\).
\end{proof}
\subsection{Density estimation}
\label{sec:density-estimation-1}

\begin{proof}[Proof of Proposition~\ref{prop:dens-convex}]
  Define
  \( \tilde{\mathcal{B}}_M = \{ \beta\in \R^{\tilde{m}(d,n)} : \| \beta \|_1
  \leq M\} \) where \( \tilde{m}(d,n) =n2^{d-1}\). To show that
  \( \beta \mapsto \empmeas{[\bar{L}(g_{\beta,n}, \blank)]} \) is convex, take
  \( \beta_1, \beta_0 \in \tilde{\mathcal{B}}_M \). Note that for any
  \( u \in [0,1] \), \( \mathbf{w} \in [0,1]^{d-1} \), and $\alpha\in[0,1]$,
  \begin{equation*}
    \exp
    {\left\{
        g_{\alpha \beta_1 + (1-\alpha)\beta_0}(z, \mathbf{w})
      \right\}}
    =  
    \left(
      \exp
      {\left\{
          g_{ \beta_1}(z, \mathbf{w})
        \right\}}
    \right)^\alpha
    \left(
      \exp
      {\left\{
          g_{ \beta_0}(z, \mathbf{w})
        \right\}}
    \right)^{1-\alpha}.
  \end{equation*}
  By Hölder's inequality,
  \begin{align*}
    \int_0^1 e^{g_{\alpha \beta_1 + (1-\alpha)\beta_0}(z, \mathbf{w})} \diff z
    & =
      \int_0^1     \left(
      \exp
      {\left\{
      g_{ \beta_1}(z, \mathbf{w})
      \right\}}
      \right)^\alpha
      \left(
      \exp
      {\left\{
      g_{ \beta_0}(z, \mathbf{w})
      \right\}}
      \right)^{1-\alpha} \diff z
    \\
    & \leq
      \left(
      \int_0^1
      \exp
      {\left\{
      g_{ \beta_1}(z, \mathbf{w})
      \right\}}
      \diff z\right)^\alpha      
      \left(
      \int_0^1
      \exp
      {\left\{
      g_{ \beta_0}(z, \mathbf{w})
      \right\}}
      \diff z
      \right)^{1-\alpha} ,
  \end{align*}
  which implies
  \begin{align*}
    \log\left(
    \int_0^1 e^{g_{\alpha \beta_1 + (1-\alpha)\beta_0}(s, \mathbf{w})} \diff s
    \right)
    & \leq
      \alpha
      \log\left(
      \int_0^1 e^{g_{ \beta_1}(s, \mathbf{w})} \diff s
      \right)
    \\
    & \qquad
      +(1-\alpha)
      \log\left(
      \int_0^1 e^{g_{ \beta_0}(s, \mathbf{w})} \diff s
      \right).
  \end{align*}
  From this it follows that
  \begin{equation*}
    \empmeas{[\bar{L}(g_{\alpha \beta_1 + (1-\alpha)\beta_0}, \blank)]}
    \leq \alpha \empmeas{[\bar{L}(g_{\beta_1 }, \blank)]}
    + (1-\alpha) \empmeas{[\bar{L}(g_{\beta_0 }, \blank)]}, 
  \end{equation*}
  so \( \beta \mapsto \empmeas{[\bar{L}(g_{\beta,n}, \blank)]} \) is convex.
  Because \(\tilde{\mathcal{B}}_M \) is convex the problem in (\ref{eq:20}) is
  convex, and because
  \( \beta \mapsto \empmeas{[\bar{L}(g_{\beta,n}, \blank)]} \) is continuous,
  the minimum is attained. To show the second statement in the proposition, note
  that
  \begin{equation}
    \label{eq:39}
    \empmeas{[-\log p]} = \empmeas{[\bar{L}(\log p, \blank)]}
    \quad \text{for any} \quad
    p \in \mathcal{P}_{M,n}^d.
  \end{equation}
  Observe that if \( a \colon [0,1]^d \rightarrow \R \) is a function such that
  \( a(u,\mathbf{w}) = a(0,\mathbf{w}) \) for all \( u \in [0,1] \) and
  \( \mathbf{w} \in [0,1]^{d-1} \), then for any \( f \in \D_M \) and
  \( O \in [0,1]^d \),
  \begin{equation}
    \label{eq:38}
    \begin{split}
      \bar{L}(f+ a, O)
      & =
        \log
        \left(
        \int_0^1 e^{f(s, W) + a(s,W)} \diff s
        \right)
        - (f(U, W) - a(U,W))
      \\
      & =
        \log
        \left(
        e^{a(0,W)} \int_0^1 e^{f(s, W) } \diff s
        \right)
        - (f(U, W) - a(0,W))
      \\
      & =     \bar{L}(f, O).
    \end{split}
  \end{equation}
  In particular, this holds when
  $a(\mathbf{x}) = b\1{\{X_{s, i} \preceq \mathbf{x}_{s}\}}$ for some
  \( b \in \R \), \( i \in \{1, \dots, n\} \), and \( s \not \in \mathcal{I} \).
  Hence by definition of \( \mathcal{P}_{M,n}^d \) we have for any
  \( p \in \mathcal{P}_{M,n}^d \) that
  \( \empmeas{[\bar{L}(\log p, \blank)]} = \empmeas{[\bar{L}(g_{\beta,n},
    \blank)]} \) for some \( \beta \in \tilde{\mathcal{B}}_M \). By
  equation~(\ref{eq:39}), we thus have that for any
  \( p \in \mathcal{P}_{M,n}^d \),
  \( \empmeas{[-\log p]} = \empmeas{[\bar{L}(g_{\beta,n}, \blank)]} \) for some
  \( \beta \in \tilde{\mathcal{B}}_M \). The result then follows from the
  definition of \( g_{\hat{\beta},n} \).
\end{proof}

\begin{proof}[Proof of Corollary~\ref{cor:hal-dens}]
  Define the log-density
  \begin{equation*}
    \f_1(u, \mathbf{w}) = \f(u, \mathbf{w}) - \log{(\int_0^1 e^{\f(z,
        \mathbf{w})} \diff z)},
  \end{equation*}
  and note that \( \f_1 \in \mathcal{P}_M^d \). Equations~(\ref{eq:39})
  and~(\ref{eq:38}) imply that \( P{[\bar{L}(\f, \blank)]} = P{[-\log \f_1]}\)
  and thus \( p_P = \f_1 \) a.e., because the log-likelihood is a strictly
  proper scoring rule \citep{gneiting2007strictly} and
  \( p_P \in \mathcal{P}_M^d \) by assumption. For any HAL estimator
  \( \hat{p}_n \) we can write
  \( \log \hat{p}_n(u, \mathbf{w}) = g_{\hat{\beta},n}(u, \mathbf{w}) -
  \log{(\int_0^1 e^{g_{\hat{\beta},n}(z, \mathbf{w})} \diff z)} \), for some
  solution \( \hat{\beta} \) to the problem (\ref{eq:20}). By
  equation~(\ref{eq:38}), \( g_{\hat{\beta},n} \) is a HAL estimator for the
  loss \( \bar{L} \) as defined in equation~(\ref{eq:estimator}). To prove
  Corollary~\ref{cor:hal-dens} it suffices to show that
  \begin{equation}
    \label{eq:40}
    \| g_{\hat{\beta},n} - \f \|_{\leb} = \smallO_P(n^{-1/3}\log(n)^{2(d-1)/3}).
  \end{equation}
  We show that Assumption~\ref{assum:loss-assum} holds for \( \bar{L} \), which
  imply that equation~(\ref{eq:40}) is true by
  Theorem~\ref{theorem:main-result}.
  Assumption~\ref{assum:loss-assum}~\ref{item:1} holds because all
  \( f \in \D_{M,n} \) are uniformly bounded, and
  Assumption~\ref{assum:loss-assum}~\ref{item:2} holds by properties of the
  Kullback-Leibler divergence because we assume that \( \omega_P \) is uniformly
  bounded away from zero and infinity \citep{gibbs2002choosing}.
  Assumption~\ref{assum:loss-assum}~\ref{item:5} is established by the same
  arguments used in the proof of Corollary~\ref{cor:main-survival}.
\end{proof}

\end{appendix}

\bibliography{bib.bib}

\end{document}